\theoremstyle{remark}
\numberwithin{equation}{section}
\theoremstyle{plain}
\theoremstyle{definition}
\theoremstyle{remark}
\newtheorem{thm}{Theorem}[section]
\newtheorem{lem}{Lemma}[section]
\newtheorem{prop}{Proposition}[section]
\newtheorem{rem}{Remark}[section]
\newtheorem{defn}{Definition}[section]
\newtheorem{ex}{Example}[section]
\def\={\overset{\text{def}}{=}}
\def\||{\parallel}
\begin{document}

\bibliographystyle{plain}

\title{Relative Equivariant Coarse Index Theorem and Relative $L^2$-Index Theorem}

\author{Xiaoman Chen}
\author{Yanlin Liu}
\author{Dapeng Zhou} 

\address[Xiaoman Chen]{School of mathematical Sciences, Fudan University.}
\email{\textbf{xchen@fudan.edu.cn}}

\address[Yanlin Liu]{School of mathematical Sciences, Fudan University.}
\email{\textbf{16110180006@fudan.edu.cn}} 

\address[Dapeng Zhou]{Research center for operator algebras, East China Normal University.}
\email{\textbf{giantroczhou@126.com}}

\date{}
\maketitle

\begin{abstract}In this paper, we give a definition of the relative equivariant coarse index for proper actions and derive a relative equivariant coarse index theorem connecting this index with the localized equivariant coarse indices. This is an equivariant version of Roe's relative coarse index theorem in \cite{Roe2016}. Furthermore, we present a definition of the relative $L^2$-index and prove a relative $L^2$-index theorem which is a relative version of Atiyah's $L^2$-index theorem in \cite{Atiyah1992}.
\end{abstract}	

\section{Introduction}

The main aim of this paper is to define the relative equivariant coarse index for proper actions, prove a relative equivariant coarse index theorem and a relative $L^2$-index theorem.

The original version of the relative index theorem was introduced by Gromov and Lawson \cite{Gromov} as a new analytic tool to research on positive scalar curvature question. Their relative index theorem can be expressed as follows. Suppose that $M_1$ and $M_2$ are complete Riemannian manifolds equipped with generalized Dirac operators $D_1$ and $D_2$ respectively, acting on Dirac bundles $S_1$ and $S_2$. Suppose further that there is an isometry of manifolds and bundles which identifies $D_1$ with $D_2$ outside of compact subsets $Y_1\subseteq M_1$ and $Y_2\subseteq M_2$. They obtained compact manifolds $\widetilde{M_i}$ with elliptic operators $\widetilde{D_i}$ by compactifying  each of the $M_i$ identically outside $Y_i$ and then define the relative index as
\[
\text{Index}_r(D_1,D_2)=\text{Index}(\widetilde{D_1})-\text{Index}(\widetilde{D_2})\in \mathbb{Z}
\]
where Index$(\widetilde{D_i})$ are ordinary Fredholm indices. Their relative index theorem states
\begin{thm}(Gromov and Lawson \cite{Gromov})
Based on above conditions, if $D_1$ and $D_2$ have uniformly positive Weitzenbock curvature operators at infinity, then
\[
\text{Index}_r(D_1,D_2)=\text{Index}(D_1)-\text{Index}(D_2).
\]
\end{thm}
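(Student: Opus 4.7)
The plan is to use the McKean--Singer heat kernel representation of all four indices and to show that, in the short-time limit, the relative index on each side collapses to a common local contribution supported on $Y_1 \sqcup Y_2$. Let $k_i^t$ denote the Schwartz kernel of $e^{-tD_i^2}$ on $M_i$, and $\widetilde{k}_i^t$ that of $e^{-t\widetilde{D}_i^2}$ on $\widetilde{M}_i$. The uniformly positive Weitzenb\"ock curvature at infinity ensures that $D_i$ is $L^2$-Fredholm with a trace-class, smooth, exponentially decaying heat kernel, so a standard argument on complete manifolds gives
\[
\text{Index}(D_i) = \int_{M_i} \text{str}\bigl(k_i^t(x,x)\bigr)\,d\text{vol}(x) \quad \text{for every } t > 0,
\]
while classical McKean--Singer on the closed $\widetilde{M}_i$ yields the analogous identity for $\widetilde{D}_i$.

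Next I would decompose $M_i = Y_i \cup E_i$ and $\widetilde{M}_i = Y_i \cup N_i$, where $E_1 \cong E_2$ via the given isometry of the ends, and $N_1 \cong N_2$ by the common compactification procedure. The key analytic input is a Duhamel-type off-diagonal Gaussian bound: whenever two Dirac-type operators on complete manifolds agree on an open set $U$, the difference of their heat kernels satisfies
\[
\bigl|k_1^t(x,y) - k_2^t(x,y)\bigr| \leq C\, e^{-d(x,\partial U)^2/(ct)}
\]
for $x,y \in U$, uniformly in $t \in (0,1]$. This estimate follows from finite propagation speed of $\cos(sD)$ combined with the Gaussian weight in the Fourier representation of $e^{-tD^2}$. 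Applied with $U = E$ to the pair $(D_1,D_2)$ and with $U = N$ to $(\widetilde{D}_1,\widetilde{D}_2)$, it forces the end contributions to cancel in the limit $t \to 0^+$:
\[
\int_{E} \bigl[\text{str}(k_1^t) - \text{str}(k_2^t)\bigr]\, d\text{vol} \to 0, \qquad \int_{N} \bigl[\text{str}(\widetilde{k}_1^t) - \text{str}(\widetilde{k}_2^t)\bigr]\, d\text{vol} \to 0.
\]

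On the compact sets $Y_i$, the local index theorem gives uniform pointwise limits $\text{str}(k_i^t(x,x)) \to \alpha(x)$ and $\text{str}(\widetilde{k}_i^t(x,x)) \to \alpha(x)$ as $t \to 0^+$, with the \emph{same} limit density $\alpha$ because the local index form depends only on the pointwise geometric data on $Y_i$, which coincides between $M_i$ and $\widetilde{M}_i$ by construction. Combining the three facts, both sides of the desired identity reduce in the limit to the common quantity $\int_{Y_1}\alpha - \int_{Y_2}\alpha$, which proves the theorem.

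The principal obstacle is the justification of the McKean--Singer identity on the non-compact manifolds $M_i$: one must verify that $e^{-tD_i^2}$ is genuinely trace class (not merely locally trace class) and that its supertrace is both time-independent and equal to $\text{Index}(D_i)$. This step is where the Weitzenb\"ock hypothesis enters essentially, through the spectral gap of $D_i^2$ on the complement of a compact set and the resulting uniform exponential decay of the heat kernel. A secondary technical point is obtaining the Gaussian off-diagonal bound with constants uniform enough in $t$ to swap limit and integral over the non-compact ends; this is standard once finite propagation is in hand, but requires careful book-keeping since the ends are not cylindrical.
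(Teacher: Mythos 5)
The paper does not actually give a proof of this theorem; it is stated in the introduction as background and attributed to Gromov and Lawson \cite{Gromov}, so there is no in-paper argument to compare against. Evaluated on its own terms, your proposal takes the heat-equation route in the spirit of Bunke \cite{Bunke1992} and Roe \cite{Roe1991} rather than the cut-and-paste Fredholm stability argument in \cite{Gromov}, and the ingredients you list (finite propagation speed, Cheeger--Gromov--Taylor off-diagonal Gaussian bounds, the local index theorem on the compact pieces) are the right ones. However, there is a genuine gap in the step you yourself identify as the principal obstacle, and the reassurance you offer does not close it.

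The claim that uniformly positive Weitzenb\"{o}ck curvature at infinity makes $e^{-tD_i^2}$ ``genuinely trace class'' on the non-compact $M_i$ is false. A spectral gap of the form $D_i^2 \geq \varepsilon^2$ on the complement of a compact set guarantees that $D_i$ is Fredholm and that $\|e^{-tD_i^2}(1-P_0)\|$ decays exponentially in $t$, but it does not make the heat semigroup trace class: on a complete manifold of infinite volume the on-diagonal kernel behaves like $t^{-m/2}$ uniformly, so $\int_{M_i}\operatorname{tr}\bigl(k_i^t(x,x)\bigr)\,d\mathrm{vol}=\infty$. Consequently the identity
\[
\text{Index}(D_i) = \int_{M_i} \operatorname{str}\bigl(k_i^t(x,x)\bigr)\,d\mathrm{vol}(x)
\]
is not available in the form you assert; indeed one should not expect either side of the equation $\text{Index}(D_1)-\text{Index}(D_2)$ to be expressible as a single absolutely convergent heat integral. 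The correct way to set up the heat-equation proof is to pass immediately to the \emph{difference}: using the isometry $h$ to transport $e^{-tD_2^2}$ to $M_1$ (and analogously for $\widetilde{D}_i$), one shows that the operator difference has a smooth kernel whose supertrace density is integrable --- precisely because the off-diagonal Gaussian bounds you invoke force it to decay exponentially on the common end --- and that the integral of this relative supertrace is $t$-independent and equals $\text{Index}(D_1)-\text{Index}(D_2)$. The time-independence of the \emph{relative} supertrace (a version of the McKean--Singer cancellation applied to the difference) is what replaces the individual trace-class assertions, and proving it requires a Duhamel argument plus the Fredholmness of each $D_i$; it is not a soft consequence of the spectral gap. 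Once that lemma is in place, the remainder of your outline --- comparison with the compactified picture on $N$, and the local index theorem on $Y_i$ --- goes through. So the structure of your argument is salvageable, but the pivot on which everything rests (trace-classness of each individual heat operator) must be replaced by a relative supertrace lemma, and that lemma is the real content of the heat-kernel proof of the relative index theorem.
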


Bunke \cite{Bunke1992} generalized this relative index theorem and developed the framework for the heat equation method in the relative index theory. The paper \cite{Bunke1992} explains the relations between the relative index theory of Gromov and Lawson \cite{Gromov} and the supersymmetric scattering theory introduced by Borisov, M\"{u}ller and Schrader \cite{Borisov1988}. Bunke \cite{Bunke1995} gave a $K$-theoretic variant of the relative index theorem and applied this theorem to the positive scalar curvature problem.

Roe \cite{Roe1991} unified the relative index constructions of Gromov and Lawson \cite{Gromov}, Borisov, M\"{u}ller and Schrader \cite{Borisov1988}, and Julg \cite{Julg1988} in the context of the operator algebras developed by himself \cite{Roe1990}. Moreover, he \cite{Roe1991} gave a heat equation proof of the relative index theorem. Furthermore, Roe \cite{Roe2016} extended his definition of the relative index such that the subsets $Y_1$ and $Y_2$ outside which the operators $D_1$ and $D_2$ coincide are only assumed to be closed rather than compact, then he also provided a generalization of the relative index theorem in this case.

Xie and Yu \cite{XieYu2014} proved a general relative higher index theorem for complete manifolds with positive scalar curvature towards infinity. They made use of this theorem to consider Riemannian metrics of positive scalar curvature on manifolds.

Donnelly \cite{Donnelly1987} derived a relative signature theorem for manifolds with strictly positive essential spectrum. This is analogous to the result of Gromov and Lawson \cite{Gromov}.

Anghel's proof in \cite{Anghel1990} was based on Gromov-Lawson's relative index theorem \cite{Gromov}. He \cite{Anghel1993} also made use of Gromov-Lawson's relative index theorem to reduce the problem from arbitrary manifolds to Riemannian products.

Nazaikinskii \cite{Nazaikinskii2015Relative} proved an analogue of Gromov-Lawson's relative index theorems for $K$-homology classes. He \cite{Nazaikinskii2015On} gave a generalization of Bunke's relative index theorem in \cite{Bunke1995} to the $KK$-theory setting and proved a relative index type theorem that compares certain elements of the Kasparov $KK$-group. 

All of these results show that the relative index theory has been developed in numerous different cases and the relative index method has found many impressive applications towards geometry and topology, especially to those related to the positive scalar curvature problem and the Novikov conjecture.

Up to now, there is little investigation concerning the relative index theory for Dirac operators on manifolds equipped with group actions, except for Xie and Yu's work in \cite{XieYu2014}. In this paper, we study the equivariant version of the relative index where the isometric group actions are proper. This is an extension for Gromov-Lawson's and Roe's relative index theorey. The main method we used is Roe's idea in \cite{Roe2016}. Unlike Xie and Yu's relative index in \cite{XieYu2014} where the two Dirac operators $D_1$ and $D_2$ need to coincide outside two compact subsets $Y_1$ and $Y_2$, in this paper,  we only require the subsets $Y_1$ and $Y_2$ to be closed. Unlike Roe's conditions \cite{Roe2016} that $Y_1$ is coarse equivalent to $Y_2$ and $M_1$ is coarse equivalent to $M_2$, we only need the existence of an equivariant coarse map from $Y_2$ to $Y_1$ and $M_2$ to $M_1$.

To define our relative equivariant coarse index, we summarize the following conditions as a set of relative equivariant coarse index data.
\begin{enumerate}
  \item[(i)] A countable discrete group $G$ acts on complete Riemannian manifolds $M_1$ and $M_2$ properly by isometries. The manifolds $M_i$ are equipped with Dirac bundles $S_i$ on which the Dirac operators $D_i$ act, and $Y_i\subseteq M_i$ are $G$-invariant closed subsets. The group $G$ also acts on $S_i$ by isometries and the projections $S_i\rightarrow M_i$ are $G$-equivariant. 
  \item[(ii)]A $G$-equivariant isometric diffeomorphism $h:M_1-Y_1\rightarrow M_2-Y_2$ covered by bundle isomorphism $\bar{h}$ satisfies
\[
\bar{h}\circ(D_1\varphi)\circ h^{-1}=D_2(\bar{h}\circ \varphi \circ h^{-1}),\,\forall \varphi\in L^2(M_1-Y_1,S_1).
\]
\item[(iii)]A $G$-equivariant coarse map $q:Y_2\longrightarrow Y_1$ satisfies that the map $M_2\longrightarrow M_1$ defined by $h^{-1}$ on $M_2-Y_2$ and $q$ on $Y_2$ is still a $G$-equivariant coarse map.
\end{enumerate}

Based on above conditions, we obtain the relative equivariant coarse index
\[\text{Index}_r^G(D_1,D_2)\in K_m(C^*(Y_1)^G)\]
where $m=\dim M_1=\dim M_2$. Our relative equivariant coarse index theorem states
\begin{thm}In the circumstances described above, if the curvature operators $\mathcal{R}_i$ on the Dirac bundles $S_i$ are uniformly positive outside $Z_i=\overline{O(Y_i,s)}$ for some $s>0$, namely, there is an $\varepsilon>0$ such that
\[
\mathcal{R}_{ix}\geq \varepsilon^2I,\,\forall x\in M_i-Z_i,
\]
then $D_i$ have the localized equivariant coarse indices
 \[
 \text{Index}_{Z_i}^G D_i \in K_m(C^*_{Z_i}(M)^G)\overset{\cong}{\longrightarrow} K_m(C^*(Y_i)^G) \ni\text{Index}_{Y_i}^G D_i
 \]
and the following identity holds in $K_m(C^*(Y_1)^G)$
\[
\text{Index}^G_r(D_1,D_2)=\text{Index}_{Y_1}^G D_1-q_*\Big(\text{Index}_{Y_2}^G D_2\Big)
\]
where $q_*:K_*(C^*(Y_2)^G)\longrightarrow K_*(C^*(Y_1)^G)$ is induced from the coarse map $q$.
\end{thm}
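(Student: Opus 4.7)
The plan is to adapt the strategy of Roe \cite{Roe2016} to the equivariant/proper-action setting. The crucial analytic input is the Lichnerowicz/Weitzenböck identity $D_i^2 = \nabla^*\nabla + \mathcal{R}_i$, which under the hypothesis $\mathcal{R}_{ix}\ge \e^2 I$ on $M_i - Z_i$ yields the spectral estimate $\|D_i\phi\|\ge \e\|\phi\|$ for every smooth compactly supported section $\phi$ whose support avoids $Z_i$. This is the essential ``invertibility-at-infinity modulo $Z_i$'' condition that permits a localized index construction.

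First I would establish existence of the localized equivariant coarse indices. Choose an odd chopping function $\chi\in C_b(\R)$ with $\chi(x)^2 \to 1$ as $|x|\to\infty$ and whose Fourier transform has arbitrarily small support. Finite propagation speed for $\me^{\mi t D_i}$ makes $\chi(D_i)$ a finite-propagation operator, and the above spectral lower bound shows that $1-\chi(D_i)^2$ is a norm limit of functions of $D_i$ spectrally supported in $(-\e,\e)$; on sections living outside $Z_i$ these vanish in the limit, so $1-\chi(D_i)^2\in C^*_{Z_i}(M_i)^G$. This defines $\text{Index}_{Z_i}^G D_i$. The coarse equivalence between $Y_i$ and its bounded thickening $Z_i=\overline{O(Y_i,s)}$, together with the standard coarse invariance of equivariant Roe algebras under proper actions, produces the isomorphism $K_m(C^*_{Z_i}(M_i)^G)\cong K_m(C^*(Y_i)^G)$ and hence $\text{Index}_{Y_i}^G D_i$.

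Next I would compare the relative and localized indices. The relative index $\text{Index}_r^G(D_1,D_2)$ is built, following the earlier section of the paper, from the chopped operators $\chi(D_1)$ and $\chi(D_2)$ using the coarse map $H:M_2\to M_1$ of condition (iii) -- which is $h^{-1}$ on $M_2-Y_2$ and $q$ on $Y_2$ -- to transport $\chi(D_2)$ to an operator $H_*\chi(D_2)$ near $M_1$. By condition (ii) the two operators already agree outside $Y_1, Y_2$ via $\bar h$, so the difference $\chi(D_1)-H_*\chi(D_2)$ has propagation concentrated in a bounded neighborhood of $Y_1$. Combining this with Step 1 on each factor, one obtains the identity of K-theory classes
\[
\bigl[1-\chi(D_1)^2\bigr] - H_*\bigl[1-\chi(D_2)^2\bigr] \;=\; \text{Index}_{Y_1}^G D_1 - q_*\,\text{Index}_{Y_2}^G D_2
\]
in $K_m(C^*(Y_1)^G)$, where the action of $H_*$ on the image of $K_m(C^*_{Z_2}(M_2)^G)$ factors, via the coarse equivalence $Y_2\simeq Z_2$, through the induced map $q_*$. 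An identification argument in the style of Roe's lemma linking relative and localized indices then shows that the left-hand side is precisely $\text{Index}_r^G(D_1,D_2)$.

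The main obstacle will be showing that the operator-level transportation $H_*$ used to define $\text{Index}_r^G(D_1,D_2)$ descends exactly to the functorial coarse-map pushforward $q_*$ appearing in the theorem. This requires a $G$-equivariant choice of covering partial isometry from $L^2$-sections over a neighborhood of $Y_2$ to $L^2$-sections over a neighborhood of $Y_1$, verification that different choices differ by an operator in $C^*(Y_1)^G$ itself and hence agree on K-theory, and careful propagation bookkeeping when $q$ is only bornologous rather than isometric. The properness of the $G$-actions and of the coarse map $q$ is what permits the standard equivariant Roe-algebra machinery to apply; once this identification of $H_*$ with $q_*$ is in hand, the rest is formal.
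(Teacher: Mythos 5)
Your overall strategy is pointed in the right direction — Bochner positivity gives localized indices, transport across $h$ produces something supported near $Y_1$, and the coarse map must descend to $q_*$ on $K$-theory — but the step that actually \emph{is} the theorem is asserted rather than proved. You write that ``an identification argument in the style of Roe's lemma linking relative and localized indices then shows that the left-hand side is precisely $\text{Index}_r^G(D_1,D_2)$,'' and that once this is ``in hand, the rest is formal.'' That identification is the entire content of the theorem, and the proposal gives no mechanism for it. In particular, $\text{Index}_r^G(D_1,D_2)$ is not defined (in the paper or in any coherent variant) as a difference of operator-level transported classes $[1-\chi(D_1)^2]-H_*[1-\chi(D_2)^2]$; it is defined as the $K_m(C^*_{Z_1}(M_1)^G)$ component of $\text{Index}^G D = \partial[\chi(D)]$, where $\partial$ is the boundary map of the extension $0\to\mathcal{A}\to\mathcal{B}\to\mathcal{B}/\mathcal{A}\to 0$ and the component is extracted via the splitting $\text{ad}_W$. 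Without engaging with this definition, there is nothing to compare against.

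The concrete ingredient your argument is missing is the intermediate ideal $\mathcal{J}=C^*_{Z_1}(M_1,\mathcal{H}_1)^G\oplus C^*_{Z_2}(M_2,\mathcal{H}_2)^G\subseteq\mathcal{A}$. Positivity of $\mathcal{R}_i$ outside $Z_i$ is used precisely to show that $f(D)\oplus 0\in\mathcal{J}$ for $f\in C_c(-\varepsilon,\varepsilon)$, so the class $[\chi(D)]$ lifts to a class $[\chi(D)]''\in K_{m+1}(\mathcal{B}/\mathcal{J})$; its boundary $\text{Index}^G_{\mathrm{loc}}D=\partial''[\chi(D)]''=\bigl(\text{Index}^G_{Z_1}D_1,\text{Index}^G_{Z_2}D_2\bigr)\in K_m(\mathcal{J})$ is then a preimage of $\text{Index}^G D$. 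The theorem follows from a two-stage ladder of split short exact sequences: one row for $C^*(Y_1)^G\to C^*(Y_1)^G\oplus C^*(Y_2)^G\to C^*(Y_2)^G$, one for $C^*_{Z_1}\to\mathcal{J}\to C^*_{Z_2}$, and one for $C^*_{Z_1}\to\mathcal{A}\to C^*(M_2)^G$, with vertical maps isomorphisms on the first two columns and the splittings $\gamma,\gamma',\gamma''$ all realized by (compatible choices of) $\text{ad}_W$. A short diagram chase with $(\mathrm{id}-\gamma_*\beta_*)$ then gives exactly $\text{Index}_{Y_1}^G D_1 - q_*\text{Index}_{Y_2}^G D_2$, with the appearance of $q_*$ coming from the fact that $\gamma'$ is $\text{ad}_W$ restricted to the localized algebras, and $W$ restricted to $\chi_{Z_2}\mathcal{H}_2$ is a cover of the coarse map $Z_2\to Z_1$ which restricts to $q$ on $Y_2$. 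Your proposal gestures at this last point but provides no bridge from the boundary-map construction of the relative index to the localized pair; that bridge is precisely the lift through $\mathcal{J}$ and the ladder of split sequences, and it needs to be written down.
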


On the other hand, the $L^2$-index theorem is due to Atiyah \cite{Atiyah1992} which expresses the index of an Dirac operator on a closed Riemannian manifold $M$ in terms of the $G$-equivariant index of some regular covering $\pi^\#:M^\#\longrightarrow M$, with $G$ the group of covering transformations. In this paper, we give a general definition of relative $L^2$-index as follows. For a set of relative equivariant coarse index data $(M_i,S_i,D_i,Y_i,h,q)$ over $G$ where $M_i$ are even dimensional and $G$ acts on $Y_i$ cocompactly, the relative $L^2$-index is defined by
 \[
 L^2\text{-Index}_r^G(D_1,D_2)=\left(\text{Tr}_{Y_1}\right)_*\left(\text{Index}_r^G(D_1,D_2)\right)\in \mathbb{R}.
 \]
Here $\text{Tr}_{Y_1}$ is the canonical trace on $C^*(Y_1)^G$.

 By introducing a densely defined, lower semicontinuous positive trace on the localized equivariant Roe algebras and proving some related properties, we obtain a  formula computing the relative $L^2$-index when $G$ acts on $M_i$ freely. With the help of this formula, we prove the following relative $L^2$-index theorem.
\begin{thm}Let $(M_i,S_i,D_i,Y_i,h,q)$ be a set of relative coarse index data with even dimensions where $Y_i$ are compact subsets and $(M_i^\#,S_i^\#,D_i^\#,Y_i^\#,h^\#,q^\#)$ be a set of relative equivariant coarse index data over $G$ acting on $M_i^\#$ freely and on $Y_i^\#$ cocompactly. If the maps $\pi_i:M_i^\#\longrightarrow M_i$ are regular $G$-covers and $M_i^\#$ are equipped with the pullback Riemannian metric, Dirac bundles $S^\#$ and Dirac operators $D_i^\#$ by $\pi_i$ where the subsets $Y_i^\#$ are the preimage of $Y_i$, then
\[%
L^2\text{-Index}^G_r\left(D_1^\#,D_2^\#\right)=L^2\text{-Index}_r(D_1,D_2)\in \mathbb{Z}.
\]
\end{thm}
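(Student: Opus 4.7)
The plan is to reduce the identity to Atiyah's $L^2$-index theorem via the Gromov-Lawson-Roe cut-and-paste construction. Since $Y_1$ and $Y_2$ are compact and $h$ isometrically identifies $M_1 \setminus Y_1$ with $M_2 \setminus Y_2$, one glues a common compactification along $h$ to obtain closed Riemannian manifolds $\widetilde{M_1}$ and $\widetilde{M_2}$ carrying Dirac operators $\widetilde{D_1}$ and $\widetilde{D_2}$ that agree with $D_i$ in a neighbourhood of $Y_i$. By the very definition of the relative coarse index in the compact case, $\text{Index}_r(D_1, D_2) = \text{Index}(\widetilde{D_1}) - \text{Index}(\widetilde{D_2}) \in \Z$; and since $C^*(Y_1) \simeq \mathcal{K}$ the canonical trace reduces to the operator trace, so this coincides with $L^2\text{-Index}_r(D_1, D_2)$. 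Because $G$ acts freely on $M_i^\#$ and the gluing is performed outside $Y_i^\#$, the cover $\pi_i$ extends to a regular $G$-cover of closed manifolds $\widetilde{M_i^\#} \to \widetilde{M_i}$, and the Dirac operator lifts to $\widetilde{D_i^\#}$.

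Next, I would apply the relative equivariant coarse index theorem to write
\[
\text{Index}_r^G(D_1^\#, D_2^\#) = \text{Index}_{Y_1^\#}^G D_1^\# - q^\#_*\bigl(\text{Index}_{Y_2^\#}^G D_2^\#\bigr) \in K_0\bigl(C^*(Y_1^\#)^G\bigr).
\]
Because $G$ acts cocompactly on $Y_i^\#$, the algebra $C^*(Y_i^\#)^G$ is Morita equivalent to the reduced group $C^*$-algebra $C^*_r(G)$, and under this equivalence the canonical trace $\text{Tr}_{Y_i^\#}$ corresponds to the standard group trace $\tau$ on $C^*_r(G)$. The critical step is to identify the localized class $\text{Index}_{Y_i^\#}^G D_i^\#$, through this Morita equivalence, with the equivariant index $\text{Index}^G(\widetilde{D_i^\#}) \in K_0(C^*_r(G))$ of the closed-cover operator. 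Granting this, and using naturality of $\tau$ under $q^\#$ (both $Y_1^\#$ and $Y_2^\#$ being $G$-cocompact), applying $(\text{Tr}_{Y_1^\#})_*$ yields
\[
L^2\text{-Index}_r^G(D_1^\#, D_2^\#) = \tau_*\bigl(\text{Index}^G(\widetilde{D_1^\#})\bigr) - \tau_*\bigl(\text{Index}^G(\widetilde{D_2^\#})\bigr),
\]
which by Atiyah's theorem equals $\text{Index}(\widetilde{D_1}) - \text{Index}(\widetilde{D_2}) = L^2\text{-Index}_r(D_1, D_2)$.

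The main obstacle is the identification $\text{Index}_{Y_i^\#}^G D_i^\# \leftrightarrow \text{Index}^G(\widetilde{D_i^\#})$ under Morita equivalence. This is essentially a locality/excision statement for equivariant Roe algebras: since $D_i^\#$ on $M_i^\#$ and $\widetilde{D_i^\#}$ on $\widetilde{M_i^\#}$ coincide on a neighbourhood of $Y_i^\#$ and the localized parametrix is supported there, the induced $K$-theory classes should match. I would establish it by constructing an explicit homotopy of Dirac-type operators between the two pictures (smoothly interpolating in the compactification region) and invoking homotopy invariance of the equivariant coarse index; alternatively, a Mayer-Vietoris argument at the level of equivariant $K$-theory, applied to the decomposition of $\widetilde{M_i^\#}$ into a neighbourhood of $Y_i^\#$ and the compactification region, delivers the same conclusion. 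Once this locality step is settled, the remainder of the proof is formal manipulation of traces and $K$-theory.
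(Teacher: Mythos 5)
Your approach diverges fundamentally from the paper's and, more importantly, has several gaps that would not close without substantial extra work.

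The most serious problem is that you invoke the relative equivariant coarse index theorem to write $\text{Index}_r^G(D_1^\#,D_2^\#)=\text{Index}_{Y_1^\#}^G D_1^\# - q^\#_*\big(\text{Index}_{Y_2^\#}^G D_2^\#\big)$, but that theorem requires the curvature operators $\mathcal{R}_i$ to be uniformly positive outside a neighbourhood of $Y_i$ — a hypothesis which is \emph{not} part of the statement you are proving. Without it the localized classes $\text{Index}_{Y_i^\#}^G D_i^\#$ are not even defined, so your second displayed equation has no meaning in this generality. Relatedly, you write that $\text{Index}_r(D_1,D_2)=\text{Index}(\widetilde{D_1})-\text{Index}(\widetilde{D_2})$ holds ``by the very definition of the relative coarse index in the compact case,'' but the paper's definition is via the boundary map $\partial:K_{m+1}(\mathcal{B}/\mathcal{A})\to K_m(\mathcal{A})$ and the split exact sequence decomposing $K_m(\mathcal{A})$; the Gromov--Lawson gluing formula is a consequence that would itself need a proof here. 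Finally, the identification of $\text{Index}_{Y_i^\#}^G D_i^\#$ with the $G$-equivariant index $\text{Index}^G(\widetilde{D_i^\#})$ of a glued closed manifold under the Morita equivalence $C^*(Y_i^\#)^G\simeq C^*_r(G)$ — which you correctly flag as the ``main obstacle'' — is a nontrivial locality/excision statement that your sketch does not establish.

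The paper's actual proof is far more elementary and sidesteps all of this. It uses the explicit trace formula derived at the end of Section~5: fixing $s$ small and a normalizing function $\chi$ with $\widehat\chi$ supported in $(-s,s)$ (so that $\chi(D_i)$ and $\chi(D_i^\#)$ have propagation $<s$), it writes both $L^2\text{-Index}_r(D_1,D_2)$ and $L^2\text{-Index}_r^G(D_1^\#,D_2^\#)$ as sums of traces of the form $\text{Tr}\big(\chi_E (1-ba)^2\chi_E\big)$, $\text{Tr}\big(\chi_E(1-ab)^2\chi_E\big)$, etc. Since $Z_{i(5s)}$ are compact, one can choose $s$ so small that $\pi_i$ restricts to homeomorphisms on $s$-balls near $Z_{i(5s)}^\#$, then partition a bounded fundamental domain $E^i=\coprod_j E^i_j$ into pieces of diameter $<s$ mapping homeomorphically to pieces $F^i_j$ partitioning $Z_{i(5s)}$. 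Because the operators have propagation $<s$, each local trace $\text{Tr}\big(\chi_{E^i_j}(1-\bar b\bar a)^2\chi_{E^i_j}\big)$ equals $\text{Tr}\big(\chi_{F^i_j}(1-ba)^2\chi_{F^i_j}\big)$, and summing over $j$ gives the result directly. No compactification, no Morita equivalence, and no appeal to Atiyah's theorem or the positivity-dependent coarse index theorem is needed.
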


This theorem can be viewed as a relative version of Atiyah's result in \cite{Atiyah1992}.

This paper is organized in the following way. In section 2, we present the background knowledge for the next sections. In section 3, we define our relative equivariant coarse index with more detailed explanation, such as alternative ways of approaching the definition and the fact that the resulting index does not depend on various choices in the process. In section 4, the relative equivariant coarse index theorem is proved. After introducing densely defined, lower seimcontinuous positive traces on localized equivariant Roe algebras, we define the relative $L^2$-index in section 5 and obtain a formula for the relative $L^2$-index for computing it. In section 6, we prove the relative $L^2$-index theorem.
\section{Preliminaries}

In this section, we introduce the background knowledge and the basic tools that will be used in the following sections of this paper.

\subsection{Geometric Modules and Associated $C^*$-Algebras}

\text{ }

In this subsection, we recall the definitions in coarse geometry and associated $C^*$-algebras which is one of the main tools to study index theory. Related theories can be found in \cite{Higson2000} and \cite{Rufus}.

Assume that $G$ is a countable discrete group acting on proper metric spaces $X$ and $X'$ properly by isometries. Proper metric space means every closed bounded subset in the metric space is compact, and proper actions signify $\{g\in G:gK\cap K\neq\emptyset\}$ is finite for every compact $K\subseteq X$. Moreover, the $G$-action on $X$ is called compact if $X/G$ is compact. We call a Borel subset $E\subseteq X$ a fundamental domain if $X=\coprod_{g\in G}gE.$ 

\begin{defn}(cf. \cite{Rufus})
  A triple $(\rho,H,U)$ is called a geometric $X$-$G$ module if $H$ is a separable Hilbert space, $\rho:C_0(X)\longrightarrow B(H)$ is a nondegenerate *-homomorphism mapping nonzero elements to noncompact operators and $U:G\longrightarrow U(H)$ is a group homomorphism satisfying
  \[\rho(f\circ g^{-1})=U_g \rho(f) U_g^*,\ \  \forall f\in C_0(X), g\in G.\]
\end{defn}

Remark that for every geometric $X$-$G$ module there is a unique *-homomorphism extension for $\rho$ on $B(X)$ such that if bounded sequence $\{f_n\}$ in $B(X)$ converge to $f$ pointwisely then $f$ is in $B(X)$ and $\{\rho(f_n)\}$ converge to $\rho(f)$ strongly, where $B(X)$ is the set of all bounded complex-valued Borel measurable functions on $X$. For $T\in B(H)$ and $f\in B(X)$, sometimes we denote $\rho(f)T$ and $T\rho(f)$ by $fT$ and $Tf$ respectively for simplicity. We may also call $H$ a geometric $X$-$G$ module.

\begin{defn}(cf. \cite{Rufus})
  The geometric $X$-$G$ module $(\rho,H,U)$ is called locally free if for every finite subgroup $F<G$ and every $F$-invariant Borel subset $E\subseteq X$, there exists a Hilbert space $H'$ and a unitary operator $V:\chi_E H\longrightarrow l^2(F)\otimes H'$ satisfying the commutative diagram
        \[\xymatrix{
          \chi_E H \ar[d]_{U_f} \ar[r]^{V\ \ \ \ \ } & l^2(F)\otimes H' \ar[d]^{\lambda_f\otimes 1} \\
          \chi_E H  \ar[r]^{V\ \ \ \ \ } & l^2(F)\otimes H'   }\]
         for every $f\in F$. Here $\lambda_f$ is the left regular action on $l^2(F)$.
\end{defn}
\begin{prop}\label{locallyfree}
If $(\rho,H,U)$ is a geometric $X$-$G$ module, then there is a locally free geometric $X$-$G$ module $(\rho\otimes 1, H\otimes l^2(G), U\otimes\lambda)$ defined by
\[
\rho\otimes 1 :C_0(X)\longrightarrow B(H\otimes l^2(G)),f\longmapsto \rho(f)\otimes 1,
\]
\[
U\otimes \lambda:G\longrightarrow U(H\otimes l^2(G)), g\longmapsto U_g\otimes \lambda_g,
\]
where $\lambda$ is the left regular representation of $G$ on $l^2(G)$.
\end{prop}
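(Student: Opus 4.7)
The plan is to first verify that the triple $(\rho \otimes 1, H \otimes l^2(G), U \otimes \lambda)$ satisfies the three defining properties of a geometric $X$-$G$ module, and then to establish local freeness by combining a coset decomposition of $l^2(G)$ with a Fell absorption argument. Separability and non-degeneracy of $\rho \otimes 1$ are immediate from those of $(\rho, H)$. The equivariance property reduces to the identity
\[
(\rho \otimes 1)(f \circ g^{-1}) = \bigl(U_g \rho(f) U_g^{*}\bigr) \otimes \bigl(\lambda_g \, 1 \, \lambda_g^{*}\bigr) = (U_g \otimes \lambda_g)(\rho \otimes 1)(f)(U_g \otimes \lambda_g)^{*}.
\]
For the non-compactness clause, if $\rho(f) \neq 0$, then for any $\eta \in H$ with $\rho(f)\eta \neq 0$ the family $\{(\rho(f) \otimes 1)(\eta \otimes \delta_g)\}_{g \in G}$ is an infinite orthogonal set of constant norm, ruling out compactness of $\rho(f) \otimes 1$.

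For local freeness, I fix a finite subgroup $F < G$ and an $F$-invariant Borel subset $E \subseteq X$. Since $E$ is $F$-invariant, $\chi_E$ commutes with each $U_f$ for $f \in F$, so $\chi_E H$ is $F$-stable and $\chi_E(H \otimes l^2(G)) = \chi_E H \otimes l^2(G)$. Next I would choose right coset representatives $\{g_i\}$ for $F \backslash G$, which gives an orthogonal decomposition $l^2(G) = \bigoplus_i l^2(F g_i)$; the bijections $F \to F g_i$, $h \mapsto h g_i$, identify each summand $F$-equivariantly with $l^2(F)$ carrying the left regular representation $\lambda_F$. Assembling these yields a unitary
\[
l^2(G)\bigl|_{F} \;\cong\; l^2(F) \otimes l^2(F \backslash G)
\]
of $F$-representations, intertwining $\lambda|_F$ with $\lambda_F \otimes 1$.

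Tensoring by $\chi_E H$ and swapping the first two tensor factors then gives a unitary identification
\[
\chi_E(H \otimes l^2(G)) \;\cong\; l^2(F) \otimes \chi_E H \otimes l^2(F \backslash G)
\]
under which the $F$-action $U_f \otimes \lambda_f$ corresponds to $\lambda_F(f) \otimes U_f|_{\chi_E H} \otimes 1$. The last step is Fell absorption: the formula $W(\delta_h \otimes v) = \delta_h \otimes U_h^{-1} v$, for $h \in F$ and $v \in \chi_E H$, defines a unitary on $l^2(F) \otimes \chi_E H$ intertwining $\lambda_F \otimes U|_{\chi_E H}$ with $\lambda_F \otimes 1$; composing with $W \otimes 1$ produces the required unitary $V$ with $H' = \chi_E H \otimes l^2(F \backslash G)$. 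The bulk of the work is really bookkeeping of tensor factors; the only substantive ingredient is Fell absorption, which converts the possibly non-trivial $F$-action $U$ on $\chi_E H$ into the trivial action at the cost of the copy of $l^2(F)$ already produced by the coset decomposition, so that is where I would be most careful.
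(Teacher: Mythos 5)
Your proof is correct and ultimately constructs the same unitary the paper does, but you organize the argument more transparently. The paper writes $V$ down in one formula, sending $\varphi\otimes\delta_g$ to $\delta_{g'}\otimes(\varphi\circ g')\otimes\delta_{Fg''}$ (with $g=g'g''$, $g'\in F$, $g''$ a coset representative), and checks the intertwining by a direct computation on basis vectors. You instead factor $V$ into two conceptual pieces: the coset identification $l^2(G)\big|_F\cong l^2(F)\otimes l^2(F\backslash G)$, followed by the Fell absorption unitary $W(\delta_h\otimes v)=\delta_h\otimes U_h^{-1}v$ on $l^2(F)\otimes\chi_E H$. The composite of these two is exactly the paper's single formula, so the content is the same; naming Fell absorption as the one substantive step is a genuine clarification of what makes the construction work. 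You also check the noncompactness clause, which the paper silently omits. One small caveat there: your argument via the infinite orthogonal family $\{(\rho(f)\otimes 1)(\eta\otimes\delta_g)\}_{g\in G}$ presumes $G$ is infinite. If $G$ is finite you should instead note that $H\otimes\mathbb{C}\delta_{\mathbb{1}}$ is a reducing subspace on which $\rho(f)\otimes 1$ restricts to a copy of $\rho(f)$, which is already noncompact by hypothesis; that observation in fact handles both cases at once.
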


\begin{proof}
It can be check that $(\rho\otimes 1, H\otimes l^2(G), U\otimes\lambda)$ is a geometric $X$-$G$ module since
\[
(U_g\otimes \lambda_g)(\rho(f)\otimes 1)(U_g\otimes \lambda_g)^*=(U_g\rho(f)U_g^*)\otimes 1=\rho(f\circ g^{-1})\otimes 1,\forall f\in C_0(X),g\in G.
\]

For every finite subgroup $F$ in $G$ and every $F$-invariant Borel subset $E$ in $X$, there is a subset $\Gamma\subseteq G$ such that
\[
G=\coprod_{a\in \Gamma} Fa.
\]
Therefore,
\[
G/F:=\{Fa:a\in G\}=\{Fa:a\in \Gamma\},
\]
\[
\forall g\in G,\ \exists !g'\in F,\ g''\in \Gamma,\ s.t. \ \ g=g'g''.
\]
Define the unitary operator $V$ as
\[
V: \chi_E H\otimes l^2(G)\longrightarrow l^2(F)\otimes \chi_E H\otimes l^2(G/F),
\]
\[
\varphi\otimes \delta_g\longmapsto \delta_{g'}\otimes (\varphi\circ g)\otimes \delta_{Fg''},\ \forall \varphi\in \chi_E H, \forall g\in G.
\]
Here $\delta_g$ is the characteristic function on $\{g\}$. Then for every $f\in F$, the following diagram is commutative
\[
\xymatrix{
  \chi_E H\otimes l^2(G) \ar[d]_{U_f\otimes \lambda_f} \ar[r]^{V\ \ \ \ \ \ \ \ } & l^2(F)\otimes \chi_E H\otimes l^2(G/F) \ar[d]^{\lambda_f\otimes 1\otimes 1} \\
  \chi_E H\otimes l^2(G) \ar[r]^{V\ \ \ \ \ \ \ \ } & l^2(F)\otimes \chi_E H\otimes l^2(G/F)   }
\]
since
\[
\xymatrix{
  \varphi\otimes \delta_g \ar[d]_{U_f\otimes \lambda_f} \ar[r]^{V\ \ \ \ \ \ } & \delta_{g'}\otimes (\varphi\circ g)\otimes \delta_{Fg''} \ar[d]^{\lambda_f\otimes 1\otimes 1} \\
  (\varphi\circ {f^{-1}})\otimes\delta_{fg} \ar[r]^{V\ \ \ \ \ \ } & \delta_{fg'}\otimes(\varphi\circ g)\otimes \delta_{Fg''}.}
\]
This shows that $(\rho\otimes 1, H\otimes l^2(G), U\otimes\lambda)$ is locally free.
\end{proof}

\begin{defn}(cf. \cite{Higson2000}, \cite{Rufus}, \cite{Yu1997})
For geometric $X$-$G$ module $(\rho,H,U)$, geometric $X'$-$G$ module $(\rho',H',U')$, an operator $T\in B(H,H')$ and a map $f:X\rightarrow X'$,

\begin{enumerate}
  \item $f$ is equivariant if it is commutative with the group actions on $X$ and $X'$;
  \item $f$ is called a coarse map if the preimage of any bounded subset of $X'$ is still bounded in $X$ and
  \[
  \forall R,\exists r,\forall x,y\in X,d(x,y)<R\Longrightarrow d(f(x),f(y))<r;
  \]
  \item $f$ is called a uniform map if it is coarse and continuous;
  \item $f$ is called close to another map $f':X\longrightarrow X$ if there is $\varepsilon >0$ such that $d(f(x),f'(x))<\varepsilon$ for every $x\in X$;
  \item $f$ is called a coarse equivalence if it is a coarse map with dense image in $X'$; in this case, $X$ is called coarse equivalent to $X'$;
  \item $T$ is equivariant if $TU_g=U_gT$ for every $g\in G$;
  \item the support of $T$ is defined by
  \[
  \text{supp}  (T)=\{(x',x)\in X'\times X:\forall \text{ open } A\ni x', B\ni x \Longrightarrow \chi_A T\chi_B\neq 0\};
  \]
  \item isometric operator $T$ is called an $\varepsilon$-cover of the coarse map $f:X\rightarrow X'$ if
    \[
    \forall (x',x)\in \text{supp}  (T)\Longrightarrow d(x',f(x))\leq\varepsilon;
    \]
  \item isometric operator $T$ is called a uniform $\varepsilon$-cover of the uniform map $f:X\rightarrow X'$ if $T$ is an $\varepsilon$-cover of $f$ and
      \[
      T^*\rho'(\phi)T-\rho(\phi\circ f)
      \]
      is an compact operator on $H$ for every $\phi\in C_0(X')$.
\end{enumerate}
\end{defn}
\begin{defn}(cf. \cite{Rufus}, \cite{XieandYu14positivescalarcurvature})
For a geometric $X$-$G$ module $(\rho,H,U)$ and an operator $T\in B(H)$,
\begin{enumerate}
  \item denote by $B(H)^G$ the set of all equivariant bounded linear operators on $H$;
  \item the propagation of $T$ is given by
  \[
  \text{prop}(T)=\sup\{d(x',x):(x',x)\in \text{supp}(T)
  \}\in [0,\infty];
  \]
  \item $T$ is locally compact if $fT$ and $Tf$ are compact operators for every $f\in C_0(X)$, or equivalently, if $\chi_KT$ and $T\chi_K$ are compact operators for every compact $K\subseteq X$;
  \item $T$ is pseudolocal if $[f, T]$ is a compact operator for every $f\in C_0(X)$, or equivalently, if $\chi_K T\chi_F$ is a compact operator for every two disjoint compact subsets $K$ and $F$ of $X$;
  \item $T$ is locally compact on an open subset $A\subseteq X$ if $Tf$ and $fT$ are compact operators for every $f\in C_0(A)$, or equivalently, if $\chi_K T$ and $T\chi_K$ are compact operators for every compact $K\subseteq A$;
  \item $T$ is supported near a closed subset $Z\subseteq X$ if there is a constant $r>0$ such that $fT=Tf=0$ for every $f\in C_c(X)$ satisfying $d(\text{supp}(f), Z)>r$, or equivalently, if $T=\chi_{B(Z,r)}T\chi_{B(Z,r)}$ for some $r>0$ where $B(Z,r)$ is the closed neighborhood of $Z$ with distance less or equal to $r$;
  \item $T$ is called locally trace-class operator if $fT$ and $Tf$ are trace-class operators for all $f\in C_c( M)$, or equivalently, if $\chi_KT$ and $T\chi_K$ are trace-class operators for all compact subsets $K\subseteq M$.
\end{enumerate}
\end{defn}

\begin{defn}(cf. \cite{XieandYu14positivescalarcurvature})
For a geometric $X$-$G$ module $(\rho,H,U)$ and a $G$-invariant closed subset $Z\subseteq X$, the following $C^*$-algebras can be defined:
    \[
    C^*(X,H)^G=\overline{\{T\in B(H)^G:\text{locally compact, finite propagation} \}},
    \]
    \[
    D^*(X,H)^G=\overline{\{T\in B(H)^G:\text{pseudolocal, finite propagation}\}},
    \]
    \[
    C^*_Z(X,H)^G=\overline{\left\{T\in B(H)^G:\begin{array}{l}\text{locally compact, finite propagation, } \\ \text{ supported near $Z$}\end{array}\right\}},
    \]
    \[
    D^*_Z(X,H)^G=\overline{\left\{T\in B(H)^G:
    \begin{array}{l}
    \text{pseudolocal, finite propagation,}\\ \text{supported near $Z$, locally compact on $X-Z$} \end{array} \right\} }.
    \]
\end{defn}

Here overlines mean the norm-closure in $B(H)^G$. Note that $C^*_Z(X,H)^G$ is an ideal in $C^*(X,H)^G$, $D^*_Z(X,H)^G$ and $C^*(X,H)^G$ are ideals in $D^*(X,H)^G$. We may write above $C^*$-algebras as $C^*(X)^G$, $D^*(X)^G$, $C^*_Z(X)^G$ and $D^*_Z(X)^G$ when there is no ambiguity about the choice of the geometric $X$-$G$ modules. And we omit the symbol $G$ in the upper right corner if $G$ is a trivial group. The $C^*$-algebra $C^*(X,H)^G$ is often called the equivariant Roe algebra and $C^*_Z(X,H)^G$ the localized equivariant Roe algebra at $Z$ about the geometric $X$-$G$ module $(X,H,U)$ .


\begin{prop}(cf. \cite{Higson2000}, \cite{Rufus}) For geometric $X$-$G$ module $(\rho,H,U)$, geometric $X'$-$G$ module $(\rho',H',U')$, $G$-invariant closed subsets $Z\subseteq X$ and $Z'\subseteq X'$, if equivariant operator $T\in B(H,H')$ is an $\varepsilon$-cover of an equivariant coarse map $f:X\rightarrow X'$ satisfying $f(Z)\subseteq Z'$, then the *-homomorphism
\[
\text{ad}_T:B(H)^G\longrightarrow B(H')^G,\ S\longmapsto TST^*,
\]
maps $C^*(X,H)^G$ to $C^*(X',H')^G$ and $C_Z^*(X,H)^G$ to $C_{Z'}^*(X',H')^G$. The induced group homomorphisms in $K$-theory
\[
(\text{ad}_T)_*:K_*(C^*(X,H)^G)\longrightarrow K_*(C^*(X',H')^G),
\]
\[
(\text{ad}_T)_*:K_*(C^*_Z(X,H)^G)\longrightarrow K_*(C^*_{Z'}(X',H')^G),
\]
do not depend on the choice of the equivariant cover $T$ and can be denoted by $f_*$. If moreover $f$ is a uniform map and $T$ is a uniform cover, then $\text{ad}_T$ maps $D^*(X,H)^G$ to $D^*(X',H')^G$ and $D_Z^*(X,H)^G$ to $D_{Z'}^*(X',H')^G$. Similarly, the homomorphisms of groups
\[
(\text{ad}_T)_*:K_*(D^*(X,H)^G)\longrightarrow K_*(D^*(X',H')^G)
\]
\[
(\text{ad}_T)_*:K_*(D^*_Z(X,H)^G)\longrightarrow K_*(D^*_{Z'}(X',H')^G)
\]
do not depend on the choice of the equivariant uniform cover $T$ and can be denoted by $f_*$ as well.
\end{prop}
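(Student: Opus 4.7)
The plan is to split the argument into two parts: first, show that $\mathrm{ad}_T$ lands inside the claimed subalgebras; second, show that the induced $K$-theory map is independent of $T$. For containment, each defining property of the generators passes through $\mathrm{ad}_T$: equivariance of $TST^*$ is immediate from $TU_g = U'_g T$ and $SU_g = U_g S$; finite propagation follows from the support calculus $\mathrm{supp}(TST^*) \subseteq \mathrm{supp}(T)\circ\mathrm{supp}(S)\circ\mathrm{supp}(T^*)$ combined with the $\varepsilon$-cover bound and the bornologous estimate on $f$, yielding propagation at most $2\varepsilon + R_0$ for some $R_0$ depending on $\mathrm{prop}(S)$; local compactness follows by choosing $\phi \in C_c(X')$ with compact support $K$, using coarseness to make $L := \overline{f^{-1}(B(K,\varepsilon))}$ compact, and noting $\phi T = \phi T \chi_L$ so that $\phi TST^* = \phi T(\chi_L S)T^*$ is compact whenever $S$ is locally compact; the support-near-$Z$ condition is preserved by combining $f(Z)\subseteq Z'$ with the bornologous estimate to bound how far $\mathrm{supp}(TST^*)$ can sit from $Z'$.

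For the $D^*$-cases the additional uniform-cover hypothesis that $T^*\phi T - \phi\circ f$ is compact for every $\phi \in C_0(X')$ is essential. From it, and using $T^*T = I$ together with the support-localisation above, one deduces that $\phi T - T(\phi\circ f)$ is compact whenever $\phi$ has compact support. Expanding $[\phi, TST^*]$ into $T[\phi\circ f, S]T^*$ plus correction terms built from these compact intertwining differences then yields pseudolocality of $TST^*$ from that of $S$. Local compactness of $TST^*$ on $X'-Z'$ is handled by the same argument as above, restricted to test functions in $C_0(X'-Z')$.

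For independence, given two equivariant $\varepsilon$-covers $T_0, T_1$ of $f$, define the path
\[
W_t = (\cos t)\, T_0 \oplus (\sin t)\, T_1 : H \longrightarrow H' \oplus H', \qquad t \in [0, \pi/2].
\]
Each $W_t$ is an isometry since $W_t^* W_t = (\cos^2 t + \sin^2 t) I = I$, and is an equivariant $\varepsilon$-cover of $f$ into the diagonal $X'$-$G$ module structure on $H' \oplus H'$ (uniform when $T_0, T_1$ are). The assignment $t \mapsto \mathrm{ad}_{W_t}$ is therefore a norm-continuous path of $*$-homomorphisms into the Roe-type algebra on the doubled module, which is canonically $M_2$ of the Roe-type algebra on $H'$; hence the induced maps on $K$-theory at $t=0$ and $t=\pi/2$ coincide. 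The endpoints are $\mathrm{ad}_{T_0}$ and $\mathrm{ad}_{T_1}$ placed in the upper-left and lower-right corners respectively, which stability of $K$-theory under corner embeddings identifies with the corresponding $(\mathrm{ad}_{T_i})_*$ on the original algebra, giving $(\mathrm{ad}_{T_0})_* = (\mathrm{ad}_{T_1})_*$.

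The most delicate step is passing from the one-sided uniform-cover hypothesis to the two-sided intertwining relation that $\phi T - T(\phi\circ f)$ is compact, since $T$ is only an isometry and not a unitary, so one cannot simply cancel $T^*$ from the left. A density argument in $C_0(X')$ together with the compact-support localisation above should bridge this gap, and this is the place where the uniform-cover hypothesis is most essentially used.
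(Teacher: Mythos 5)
The paper gives no proof for this proposition (it cites \cite{Higson2000} and \cite{Rufus} and states it as a preliminary fact), so your argument must stand on its own. Your overall strategy is the standard one and most of it is sound: the support calculus for propagation, the properness argument for local compactness and for the support-near-$Z$ condition, and the rotation homotopy $W_t = (\cos t\,T_0, \sin t\,T_1)$ combined with matrix stability for independence of the cover are all correct and essentially what one finds in the references.

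However, there is a genuine gap at the step you flag yourself: deducing that $\phi T - T(\phi\circ f)$ is compact from the one-sided hypothesis $T^*\phi T - \rho(\phi\circ f)\in\mathcal K$. A ``density argument in $C_0(X')$ together with compact-support localisation'' will not close this gap: localising to compact $K\subseteq X'$ and $L=\overline{f^{-1}(B(K,\varepsilon))}\subseteq X$ only shows $\phi T - T(\phi\circ f) = \chi_K(\phi T - T(\phi\circ f))\chi_L$, and an isometry cut down by characteristic functions of compact sets is in general a partial isometry of infinite rank, not a compact operator. The correct argument is a direct $C^*$-identity computation: set $A=\phi T - T(\phi\circ f)$ and expand
\[
A^*A \;=\; T^*|\phi|^2T \;-\; T^*\bar\phi T\,(\phi\circ f)\;-\;(\bar\phi\circ f)\,T^*\phi T\;+\;|\phi\circ f|^2,
\]
using $T^*T=I$. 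Applying the uniform-cover hypothesis to $|\phi|^2$, $\bar\phi$ and $\phi$ (all in $C_0(X')$) shows that each of the first three terms is congruent to $|\phi\circ f|^2$ modulo compacts, so $A^*A\in\mathcal K$, hence $A\in\mathcal K$. With this lemma in hand your expansion of $[\phi, TST^*]$ does establish pseudolocality (noting also that $\phi\circ f\in C_0(X)$ since $f$ is continuous and coarse, so $[\phi\circ f, S]$ is compact by pseudolocality of $S$). The rest of your argument, including the homotopy $W_t$ being a (uniform) $\varepsilon$-cover for each $t$ and the identification of the endpoints with $(\mathrm{ad}_{T_i})_*$ via corner embeddings, is correct as written.
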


\subsection{Dirac Operators and Localized Equivariant Coarse Indices}\label{Dirac Operators and Localized Equivariant Coarse Indices}

\text{ }

In this subsection, we sketch the definitions of the Dirac bundls, the general Dirac operators, and the localized equivariant coarse indices. The way to get equivariant uniform covers by cutoff functions in this subsection is a fundamental trick to overcome the difficulties of defining the relative equivariant coarse index with group actions.

\begin{defn}(cf. \cite{Gromov}) Let $S$ be a bundle of Dirac modules over a Riemannian manifold $M$, that is, $S$ is a smooth bundle on $M$ of which each fiber $S_x$ for $x\in M$ is a left $Cl(T_xM)$-module and the Clifford multiplication $Cl(M)\times S\longrightarrow S$ is a smooth map. Here $Cl(T_xM)$ is the Clifford algebra generated by $T_xM$ and $Cl(M)=\coprod Cl(T_xM)$. Call $S$ a Dirac bundle if it is $\mathbb{Z}_2$-graded and equipped with a Hermitian metric and a compatible connection such that
\begin{enumerate}
        \item The Clifford action of each vector $v\in T_xM$ on $S_x$ is odd and skew-adjoint, that is, it exchanges the decomposition of $S$ and satisfies\\ $\langle v\cdot s_1,s_2\rangle+\langle s_1,v\cdot s_2\rangle =0$, $\forall s_1,s_2\in S_x$;
        \item The connection on $S$ is compatible with the Levi-Civita connection on $M$, in the sense that $\nabla_X(Ys)=(\nabla_XY)s+Y\nabla_X s$ for all vector fields $X,Y$ and sections $s\in C^\infty(M,S)$;
        \item  The connection and metric respect the decomposition of $S$.
\end{enumerate}
\end{defn}

\begin{defn}(cf. \cite{Gromov})
Let $M$ be a Riemannian manifold with a Dirac bundle $S$. Denote by $\{e_j\}$ an arbitrary smooth orthonormal tangent vector field on $M$.
\begin{enumerate}
\item The general Dirac operator $D$ on $S$ is locally defined by
\[
D:C_c^\infty(M,S)\longrightarrow C_c^\infty(M,S)
\]
\[
D=\sum e_j\cdot\nabla_{e_j};
\]
\item The connection Laplacian $\nabla^*\nabla$ on $S$ is locally defined by
\[
\nabla^*\nabla: C_c^\infty(M,S)\longrightarrow C_c^\infty(M,S)
\]
\[
\nabla^*\nabla=-\sum \left(\nabla_{e_j}\nabla_{e_j}-\nabla_{\nabla _{e_j}e_j}\right);
\]
\item The curvatrue operator $\mathcal{R}$ on $S$ is locally defined by
\[
\mathcal{R}:C_c^\infty(M,S)\longrightarrow C_c^\infty(M,S)
\]
\[
\mathcal{R}=\frac{1}{2}\sum e_i\cdot e_j \cdot R_{e_i,e_j},
\]
where 
\[
R_{X,Y}=\nabla_X\nabla_Y-\nabla_Y\nabla_X-\nabla_{[X,Y]},
\]
for every smooth tangent vector fields $X$ and $Y$ on $M$.
\end{enumerate}
\end{defn}

It is well-known that there is a Bocher-Weitzenbock formula \cite{Gromov} connecting above three operators on $S$
\[
D^2=\nabla^*\nabla+\mathcal{R}.
\]

Assume that $G$ is a countable discrete group acting on a complete Riemannian manifold $M$ with Dirac bundle $S$ properly by isometries. Also, suppose $S$ has isometric $G$-action and the projection $S\rightarrow M$ is equivariant. In this case, $H=L^2(M,S)$ is a separable Hilbert space, and the map $$\rho:C_0(X)\rightarrow B(H), f\mapsto \rho(f)$$ defined by $\rho(f)(\varphi)=f\varphi$ for every $\varphi\in H$, is a nondegenerate *-homomorphism mapping nonzero elements to noncompact operators. There is a group homomorphism $$U:G\rightarrow U(H), g\rightarrow U_g$$ where $U_g(\varphi)=\varphi\circ g^{-1}$ for every $\varphi \in H.$ Then $(\rho, H, U)$ is a geometric $M$-$G$ module since
      \[U_g\rho(f)U_g^*(\varphi)=U_g\rho(f)(\varphi\circ g)=(f\circ g^{-1})\varphi=\rho(f\circ g^{-1})(\varphi),\]
      \[\forall f\in C_0(X), g\in G,\varphi\in H.\]
Thanks to Proposition \ref{locallyfree}, we know that $(\rho\otimes 1, H\otimes l^2(G), U\otimes \lambda)$ is a locally free geometric $M$-$G$ module.

\begin{defn}(cf. \cite{Guo2019})
For a countable discrete group $G$ and a complete Riemannian manifold $M$ equipped with properly isometric $G$-actions, a smooth function $\omega$ on $M$ is called a cutoff function if it is nonnegative, its support has compact intersections with all $G$-orbits, and that for all $x\in M$,
\[
\sum_{g\in G}\omega(gx)^2=1.
\]
\end{defn}

Fix a cutoff function $\omega\in C^\infty(M)$. The map
\[
\iota:L^2(M,S)\longrightarrow L^2(M,S)\otimes l^2(G),
\]
given by
\[
(\iota(\varphi))(x,g)=\omega(g^{-1}x)\varphi(x),
\]
for $\varphi\in L^2(M,S)$, $x\in M$ and $g\in G$, is a $G$-equivariant, isometric embedding \cite{Hochs2019} and it intertwines the actions by $C_0(M)$ on $L^2(M,S)$ and $L^2(M,S)\otimes l^2(G)$. Moreover, $\iota$ is a uniform $0$-cover for $\text{id}_M$. Denote by $\mathcal{H}$ the Hilbert space $L^2(M,S)\otimes l^2(G)$ and identify $H$ with $\iota(H)$, then $\mathcal{H}=H\oplus H^\bot$ and
\[
\rho(f)\otimes1=\left(\begin{array}{cc}\rho(f) & 0 \\ 0 & *\end{array}\right),
\]
\[
U_g\otimes\lambda_g=\left(\begin{array}{cc} U_g  & 0 \\ 0 & *\end{array}\right),
\]
with respect to this direct decomposition of $\mathcal{H}$ for every $f\in C_0(M)$ and $g\in G$. The *-homomorphism
\[
\oplus\ 0:\ B(H)\longrightarrow B(\mathcal{H}), T\longmapsto T\oplus 0
\]
preserves finite propagation, local compactness, pseudolocality, having support near a $G$-invariant closed subset $Z\subseteq M$ and local compactness on $M-Z$, thus it induces
\[
\oplus\ 0: C^*(M,H)^G\longrightarrow C^*(M,\mathcal{H})^G;\ \ \ \ \oplus\ 0: C^*_Z(M,H)^G\longrightarrow C^*_Z(M,\mathcal{H})^G;
\]
\[
\oplus\ 0: D^*(M,H)^G\longrightarrow D^*(M,\mathcal{H})^G;\ \ \ \ \oplus\ 0: D^*_Z(M,H)^G\longrightarrow D^*_Z(M,\mathcal{H})^G.
\]
We still denote by $\oplus \ 0$ the group homomorphisms in $K$-theory derived from above *-homomorphisms. The maps $\oplus \ 0$ in $K$-theory level are independent of the choice of the cutoff function $\omega$. More precisely, the former two do not rely on the choice of equivariant cover of $\text{id}_M$ and the later two do not depend on the choice of equivariant uniform cover of $\text{id}_M$. Remark that
\[
\oplus\ 0: K_*(C^*(M,H)^G)\longrightarrow K_*(C^*(M,\mathcal{H})^G);
\]
\[
\oplus\ 0: K_*(C^*_Z(M,H)^G)\longrightarrow K_*(C^*_Z(M,\mathcal{H})^G);
\]
are group isomorphisms if $H$ itself is locally free.

Now, in order to introduce the localized equivariant coarse index of $D$ (cf. \cite{Guo2019} and \cite{Hochs2019}), assume that the curvature operator $\mathcal{R}$ is uniformly positive outside the $G$-invariant closed subset $Z$ of $M$, that is, there is a constant $\varepsilon>0$ such that
\[
\mathcal{R}_x\geq \varepsilon^2I,\ \ \forall x\in M-Z.
\]

  \begin{defn}
  An odd continuous function $\chi:\mathbb{R}\longrightarrow [-1,1]$ is called a normalizing function if $\lim_{t\rightarrow +\infty}\chi(t)=1$ and $\chi(t)>0$ for $t>0$.
  \end{defn}

  Take a normalizing function $\chi$ such that $\chi^2-1$ is supported in $(-\varepsilon,\varepsilon)$.

 When $\dim M=m$ is odd, lemma 2.3 in \cite{Roe2016} tells us that the equivalence of $\chi(D)$ in $D^*(M,H)^G/C^*_Z(M,H)^G$
 is independent of the choice of this kind of normalizing function $\chi$. Furthermore, the equivalence of $\frac{1+\chi(D)}{2}$ is a projection in $D^*(M,H)^G/C^*_Z(M,H)^G$.

 When $\dim M=m$ is even, the bundle $S$ has a $\mathbb{Z}_2$-grading  $S=S'\oplus S''$. In this case $D=\left(\begin{array}{ll} 0 & D_+ \\ D_- & 0\end{array}\right)$ and $
  \chi(D)=\left(\begin{array}{cc} 0 & \chi(D)_+ \\
  \chi(D)_- & 0
  \end{array}\right)
  $ with respect to this decomposition of $S$.
  Moreover, $\chi(D)_-$ is in $D^*(M,H)^G$ and the equivalence of $\chi(D)_-$ is a unitary in $D^*(M,H)^G/C^*_Z(M,H)^G$ which is independent of the choice of this kind of normalizing function $\chi$.

  Define
  \[
  [\chi(D)]=\left\{
              \begin{array}{ll}
                \left[ \frac{1+\chi(D)}{2} \right] \in K_{m+1}(D^*(M,H)^G/C^*_Z(M,H)^G), & \hbox{m is odd;} \\
                \left[\chi(D)_- \right]  \in K_{m+1}(D^*(M,H)^G/C^*_Z(M,H)^G), & \hbox{m is even.}
              \end{array}
            \right.
  \]
  The exact sequences of $C^*$-algebras
  \[
  \xymatrix{
    0  \ar[r] & C^*_Z(M,H)^G \ar[d]^{\oplus\ 0} \ar[r] & D^*(M,H)^G \ar[d]^{\oplus\ 0} \ar[r] & D^*(M,H)^G/C^*_Z(M,H)^G \ar[d]^{\oplus\ 0} \ar[r] & 0  \\
    0 \ar[r] & C^*_Z(M,\mathcal{H})^G \ar[r] & D^*(M,\mathcal{H})^G \ar[r] & D^*(M,\mathcal{H})^G/C^*_Z(M,\mathcal{H})^G \ar[r] & 0   }
  \]
  induces a commutative diagram in $K$-theory:
  \[
  \xymatrix{
    K_{m+1}\left(D^*(M,H)^G/C^*_Z(M,H)^G\right) \ar[d]^{\oplus\ 0} \ar[r]^{\ \ \ \ \ \ \ \ \ \ \ \ \  \partial} & K_{m}(C^*_Z(M,H)^G) \ar[d]^{\oplus\ 0} \\
    K_{m+1}\left(D^*(M,\mathcal{H})^G/C^*_Z(M,\mathcal{H})^G\right) \ar[r]^{\ \ \ \ \ \ \ \ \ \ \ \ \  \partial'} & K_{m}(C^*_Z(M,\mathcal{H})^G).   }
  \]
  The localized equivariant coarse index of $D$ is defined by
  \[
  \text{Index}^G_Z (D)=\left(\partial [\chi(D)]\right)\oplus 0=\partial'\left( [\chi(D)]\oplus 0\right)\ \in K_m(C^*_Z(M,\mathcal{H})^G).
  \]
\subsection{Computation in $K$-Theory}

\text{ }

In this subsection, we introduce some computation method in $K$-theory that can be used to compute our relative equivariant coarse index defined in Section \ref{The Definition of the Relative Equivariant Coarse Index} and to compute the relative $L^2$-index defined in Section \ref{Relative L2 Index}. This result is a generalization of the exercise 2.11.22 in \cite{Rufus}.

Let $C$ be a $C^{\ast}$-algebra and $I$ a closed ideal in $C$. The double of $C$ along $I$ is the $C^{\ast}$-algebra defined by
	$$D_C(I)=\{(a,b)\in C\oplus C\mid a-b\in I\}.$$

The natural inclusion $$I\longrightarrow D_{C}(I),\ \ a\longmapsto (a,0),$$
and the natural projection
\[
D_C(I)\longrightarrow C,\ \ (a,b)\longmapsto b
\]
lead to a split exact sequence of $C^*$-algebras
\[
\xymatrix@C=0.5cm{
  0 \ar[r] & I \ar[rr] && D_C(I) \ar[rr] && C \ar[r] & 0 }
\]
with the splitting map
\[
C\longrightarrow  D_C(I),\ \ a \longrightarrow (a,a),
\]
and thus a direct sum decomposition
	$$K_*(D_{C}(I))\cong K_*(I)\oplus K_*(C).$$
	
    Regard $M_{n\times n}(D_C(I)^+)$ as a subset of $M_{n\times n}(C^+)\oplus M_{n\times n}(C^+)$ for every positive integer $n$ by the isomorphism
    \[
    M_{n\times n}(D_C(I)^+)\cong \left\{(x,y)\in M_{n\times n}(C^+)\oplus M_{n\times n}(C^+):x-y\in M_{n\times n}(I) \right\}.
    \]
    For every idempotents $(e,f)$ in $M_{n\times n}(D_C(I)^+)$, obviously $e$ and $f$ are idempotents in $M_{n\times n}(C^+)$. Let
    $$\mathcal{Z}(f)=\begin{pmatrix} f & 0&  1-f & 0\\ 1-f &0 & 0 &f\\ 0 & 0 & f &1-f \\ 0 &1 &0 &0\end{pmatrix},$$
    whose inverse is
    $$\mathcal{Z}(f)^{-1}=\begin{pmatrix} f & 1-f &0& 0\\ 0 &0 & 0 &1\\ 1-f & 0 & f &0 \\ 0 &f &1-f &0\end{pmatrix}.$$
    Then define
        \begin{align*}
        \mathcal{E}(e,f)&=\mathcal{Z}(f)^{-1}\begin{pmatrix}e & 0 & 0 &0\\ 0 & 1-f &0 & 0\\ 0 & 0 & 0 &0\\ 0 & 0 & 0 &0\end{pmatrix}\mathcal{Z}(f)\\
        & =\begin{pmatrix}
        fef+(1-f) & 0 & fe(1-f) & 0\\
        0 & 0& 0& 0\\
        (1-f)ef & 0 & (1-f)e(1-f) & 0 \\
        0 & 0& 0& 0
        \end{pmatrix}
        \\
       & =\begin{pmatrix} 1+ f(e-f)f & 0 & fe(e-f) & 0\\
        0 & 0 &0 &0\\ (e-f)ef &0 & (1-f)(e-f)(1-f) &0 \\ 0 &0 &0 &0\end{pmatrix},
        \end{align*}
    and
    \[\mathcal{E}_n=
    \begin{pmatrix}1 &0 & 0 &0\\ 0 &0 &0 &0\\ 0&0 &0 &0\\ 0 &0 &0 &0\end{pmatrix}.\]
    It follows immediately that
    \[\mathcal{E}(e,f)\in M_{4n\times 4n}(I^+),\ \ \ \ \mathcal{E}_n\in M_{4n\times 4n}(I^+),\ \ \ \ \mathcal{E}(e,f)-\mathcal{E}_n\in M_{4n\times 4n}(I).\]
    Furtherly, $\mathcal{E}(e,f)$ and $\mathcal{E}_n$ are idempotents.


    \begin{prop}\label{Lemma:DifferenceConstruction}
    For every element $x$ in $K_0(D_C(I))$, there are idempotents $(e,f)$ and $(e',f')$ in $M_{n\times n}(D_C(I)^+)$ such that
     \[
     x=[(e,f)]-[(e',f')]\in K_0(D_C(I)),
     \]
     \[
     (e,f)-(e',f')\in M_{n\times n}(D_C(I)).
     \]
     According to the decomposition $K_0(D_{C}(I))\cong K_0(I)\oplus K_0(C)$, decompose $x$ into $K_0(I)$ is exactly
        \[
        [\mathcal{E}(e,f)]-[\mathcal{E}(e',f')]\in K_0(I).
        \]
    \end{prop}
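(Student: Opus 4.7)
The plan is to exploit the splitting $K_0(D_C(I)) \cong K_0(I)\oplus K_0(C)$ directly. Writing $\iota\colon I \hookrightarrow D_C(I)$ and $s\colon C \to D_C(I)$, $c \mapsto (c,c)$, for the inclusion and splitting, and $\phi\colon D_C(I) \to C$, $(a,b)\mapsto b$, for the quotient projection, one has $\iota_*K_0(I) = \ker\phi_*$, and the projection of $y \in K_0(D_C(I))$ onto the $K_0(I)$-summand is the unique preimage under $\iota_*$ of $y - s_*\phi_*(y)$. Applying this to $x = [(e,f)] - [(e',f')]$, for which $\phi_*(x) = [f] - [f']$, yields
\[
x - s_*\phi_*(x) = \bigl([(e,f)] - [(f,f)]\bigr) - \bigl([(e',f')] - [(f',f')]\bigr) \in K_0(D_C(I)).
\]
Since $\iota_*$ is injective, it therefore suffices to establish the single-idempotent identity
\[
\iota_*\bigl([\mathcal{E}(e,f)] - [\mathcal{E}_n]\bigr) = [(e,f)] - [(f,f)].
\]

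For this I would compute directly in $M_{4n}(D_C(I)^+)$. The element
\[
P = \bigl(\mathrm{diag}(e, 1-f, 0, 0),\ \mathrm{diag}(f, 1-f, 0, 0)\bigr)
\]
lies in $M_{4n}(D_C(I)^+)$ because its two coordinates differ by $\mathrm{diag}(e-f, 0, 0, 0) \in M_{4n}(I)$, and its $K$-class is $[(e,f)] + s_*[1-f]$. Conjugating $P$ by the invertible element $s(\mathcal{Z}(f)) = (\mathcal{Z}(f), \mathcal{Z}(f))$ — noting that $\mathcal{Z}(f)$ is invertible in $M_{4n}(C^+)$ thanks to $f^2 = f$, with inverse as given — produces $(\mathcal{E}(e,f), \mathcal{E}_n)$: the first coordinate by the defining formula for $\mathcal{E}(e,f)$, the second by the specialization $e = f$ of the same formulas, which collapses to $\mathrm{diag}(1,0,0,0) = \mathcal{E}_n$. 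Hence
\[
[(\mathcal{E}(e,f), \mathcal{E}_n)] - [(\mathcal{E}_n, \mathcal{E}_n)] = [(e,f)] + s_*[1-f] - s_*[1_n] = [(e,f)] - s_*[f] = [(e,f)] - [(f,f)].
\]

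Under the unitalization $\iota\colon I^+ \to D_C(I)^+$, $a + \lambda\cdot 1 \mapsto (a+\lambda, \lambda)$, the idempotents $\mathcal{E}(e,f), \mathcal{E}_n \in M_{4n}(I^+)$, both with scalar part $\mathcal{E}_n$, map to $(\mathcal{E}(e,f), \mathcal{E}_n)$ and $(\mathcal{E}_n, \mathcal{E}_n)$ respectively. The previous display is therefore precisely $\iota_*\bigl([\mathcal{E}(e,f)] - [\mathcal{E}_n]\bigr)$, establishing the single-idempotent identity. Applying it to $(e,f)$ and $(e',f')$ and subtracting, the $[\mathcal{E}_n]$ contributions cancel — consistent with the fact that $\mathcal{E}(e,f) - \mathcal{E}(e',f') \in M_{4n}(I)$, making $[\mathcal{E}(e,f)] - [\mathcal{E}(e',f')]$ a legitimate $K_0(I)$-class in its own right — and we recover $[\mathcal{E}(e,f)] - [\mathcal{E}(e',f')]$ as the $K_0(I)$-component of $x$.

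The hardest part is purely bookkeeping in unitized matrix algebras: verifying that $\mathcal{Z}(f)^{-1}$ truly inverts $\mathcal{Z}(f)$ in $M_{4n}(C^+)$, that their diagonal lifts via $s$ are mutually inverse in $M_{4n}(D_C(I)^+)$, and that the matching of scalar parts is handled consistently when passing between $K_0(I)$ and $K_0(D_C(I))$. The conjugation identity $\mathcal{Z}(f)^{-1}\,\mathrm{diag}(e,1-f,0,0)\,\mathcal{Z}(f) = \mathcal{E}(e,f)$ itself is a routine matrix multiplication using only $f^2 = f$, and is in effect encoded in the formulas already written in the statement.
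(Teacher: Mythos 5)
Your proof is correct and takes essentially the same route as the paper: both hinge on the conjugation identity $\mathcal{Z}(f)^{-1}\,\mathrm{diag}(e,1-f,0,0)\,\mathcal{Z}(f) = \mathcal{E}(e,f)$ (with the specialization $e=f$ giving $\mathcal{E}_n$) and on padding the idempotent pair by $(1-f,1-f)$ so that conjugation by the diagonally-lifted invertible lands in the image of the unitization $I^+ \hookrightarrow D_C(I)^+$. The only difference is organizational — you first subtract off $s_*\phi_*(x)$ and reduce to a single-idempotent identity, where the paper carries both pairs $(e,f)$ and $(e',f')$ through one chain of equalities and reads off the $\alpha$- and $\beta$-components at the end — which is a tidy restructuring of the same computation rather than a new argument.
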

    \begin{proof}
    Consider the split exact sequence in $K$-theory
    \[
    \xymatrix{
            0 \ar[r]& K_0(I) \ar[r] &K_0(D_C(I))\ar[r] & K_0(C)\ar[r]& 0
        }.
    \]
    Denote by $\alpha$ and $\beta$ the inclusion map and the splitting map respectively.

    Note that $(1-f,1-f)$ and $(1-f',1-f')$ are idempotents in $M_{n\times n}(D_C(I)^+)$, $(\mathcal{Z}(f),\mathcal{Z}(f))$ and $(\mathcal{Z}(f'),\mathcal{Z}(f'))$ are invertibles in $M_{n\times n}(D_C(I)^+)$, $1-f$ and $1-f'$ are idempotents in $M_{n\times n}(C^+)$. There are \[
    [(1-f,1-f)]-[(1-f',1-f')]\in K_0(D_C(I)),\]
    \[
        [\mathcal{E}(e,f)]-[\mathcal{E}(e',f')]\in K_0(I),\ [1-f]-[1-f']\in K_0(C),
        \]
    and the following equations hold in $K_0(D_C(I))$:
    \begin{align*}
            &[(e,f)]-[(e',f')]&\\
            =&\left[\left(e\oplus (1-f),f\oplus (1-f)\right)\right]-[(1-f,1-f)]&\\
            -&\left[\left(e'\oplus (1-f'),f'\oplus (1-f')\right)\right]+[(1-f',1-f')]&\\
            =&\left[(\mathcal{Z}(f),\mathcal{Z}(f))^{-1}\left(D_{e,1-f}, D_{f,1-f}\right)(\mathcal{Z}(f),\mathcal{Z}(f))\right]-[(1-f,1-f)]&\\
            -&\left[(\mathcal{Z}(f'),\mathcal{Z}(f'))^{-1}\left(D_{e',1-f'}, D_{f',1-f'}\right)(\mathcal{Z}(f'),\mathcal{Z}(f'))\right]+[(1-f',1-f')]&\\
            =&[(\mathcal{E}(e,f),\mathcal{E}_n)]-[(1-f,1-f)]-[(\mathcal{E}(e',f'),\mathcal{E}_n)]+[(1-f',1-f')]&\\
            =&\alpha([\mathcal{E}(e,f)]-[\mathcal{E}(e',f')])+\beta([1-f']-[1-f]),&
        \end{align*}
        where
        $$
        D_{e,1-f}=\begin{pmatrix}e & 0 & 0 &0\\ 0 & 1-f &0 & 0\\ 0 & 0 & 0 &0\\ 0 & 0 & 0 &0\end{pmatrix}, \quad D_{f,1-f}=\begin{pmatrix} f&0 &0 &0\\ 0&1-f&0 &0\\ 0&0 & 0&0\\ 0&0 &0 &0\end{pmatrix},$$
        and similarly for $D_{e',1-f'}$ and $D_{f',1-f'}$. The proposition has been proved.
    \end{proof}

\section{Relative Equivariant Coarse Index}\label{The Definition of the Relative Equivariant Coarse Index}

In this section, we define the relative equivariant coarse index with proper actions and give some further expositions about alternative ways to get it and the property that the resulting index will not change by different choice in the process.

\subsection{Definition of Relative Equivariant Coarse Index}

\text{ }

The main purpose of this subsection is to present the steps to define the relative equivariant coarse index.

The conditions that we need is summarised as a set of relative equivariant coarse index data:
\begin{enumerate}
  \item[(i)] A countable discrete group $G$ acts on complete Riemannian manifolds $M_1$ and $M_2$ properly by isometries. The manifolds $M_i$ are equipped with the Dirac operators $D_i$ on Dirac bundles $S_i$ and $Y_i\subseteq M_i$ are $G$-invariant closed subsets. The group $G$ also acts on $S_i$ by isometries and the projections $S_i\rightarrow M_i$ are $G$-equivariant. 
  \item[(ii)]A $G$-equivariant isometric diffeomorphism $h:M_1-Y_1\rightarrow M_2-Y_2$ covered by bundle isomorphism $\bar{h}$ satisfies
\[
\bar{h}\circ(D_1\varphi)\circ h^{-1}=D_2(\bar{h}\circ \varphi \circ h^{-1}),\forall \varphi\in L^2(M_1-Y_1,S_1).
\]
\item[(iii)]A $G$-equivariant coarse map $q:Y_2\longrightarrow Y_1$ satisfies that the map $M_2\longrightarrow M_1$ defined by $h^{-1}$ on $M_2-Y_2$ and $q$ on $Y_2$ is still a $G$-equivariant coarse map.
\end{enumerate}

These can be simply explained as: for two complete Riemannian manifolds $M_i$ equipped with Dirac operators $D_i$ and proper isometric $G$-actions, there exists an equivariant coarse map from $M_2$ to $M_1$ satisfying that it preserves all the structures between $G$-invariant open subsets $M_i-Y_i$ and it maps $Y_2$ to $Y_1$.

Based on a set of relative equivariant coarse index data $(M_i,S_i,D_i,Y_i,h,q)$ over $G$ defined above, let
$$H_i=L^2(M_i,S_i),$$
 $$\rho_i:C_0(M_i)\rightarrow B(H_i), f\mapsto \rho_i(f),$$
 $$U_{i}:G\rightarrow U(H_i),g\mapsto U_{ig},$$ where $
 \rho_i(f)(\varphi)=f\varphi$ and $U_{ig}\varphi=\varphi\circ g^{-1}$ for every $\varphi \in H_i$ and $i=1,2$.
According to the talk in Section \ref{Dirac Operators and Localized Equivariant Coarse Indices}, we know that $(\rho_i,H_i,U_i)$ are geometric $M_i$-$G$ modules. Proposition \ref{locallyfree} tells us that $(\rho_i\otimes 1,H_i\otimes l^2(G),U_i\otimes \lambda)$ are locally free geometric $M_i$-$G$ modules. Let $\mathcal{H}_i$$=H_i\otimes l^2(G)$.

\begin{lem}\label{nbhd}
For every $t>0$, the map $h$ is bijective between $O(Y_1,t)-Y_1$ and $O(Y_2,t)-Y_2$. Here $O(Y_i,t)$ mean the open neighborhood of $Y_i$ with distance less than $t$ which are obviously $G$-invariant.
\end{lem}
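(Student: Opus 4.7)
The plan is to reduce the claim to showing that $h$ preserves the distance-to-$Y$ function: for every $x \in M_1 - Y_1$,
\[
d_{M_1}(x, Y_1) = d_{M_2}(h(x), Y_2).
\]
Once this identity is known, the lemma is immediate, since $O(Y_i, t) - Y_i = \{z \in M_i - Y_i : d(z, Y_i) < t\}$ and $h$ is already a bijection $M_1 - Y_1 \to M_2 - Y_2$.

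For the inequality $d_{M_2}(h(x), Y_2) \le d_{M_1}(x, Y_1)$, I would use the completeness of $M_1$ and the fact that $Y_1$ is closed: by Hopf--Rinow, there is a minimizing unit-speed geodesic $\gamma : [0, L] \to M_1$ from $x$ to some $y \in Y_1$, with $L = d_{M_1}(x, Y_1)$. Truncating $\gamma$ at the first time it meets $Y_1$, I may assume $\gamma([0, L)) \subseteq M_1 - Y_1$ and $\gamma(L) = y \in Y_1$. Because $h$ is an isometric diffeomorphism on $M_1 - Y_1$, the image $h \circ \gamma : [0, L) \to M_2 - Y_2$ is a unit-speed path of length $L$ starting at $h(x)$.

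The main (mild) obstacle is controlling the endpoint of this image path. Since $h \circ \gamma$ has finite length, it is Cauchy as $s \to L^-$, so by completeness of $M_2$ it extends to a continuous map $\widetilde{\gamma} : [0, L] \to M_2$. I claim $\widetilde{\gamma}(L) \in Y_2$: otherwise $\widetilde{\gamma}(L) \in M_2 - Y_2$, and continuity of $h^{-1}$ there would force $\gamma(s) = h^{-1}(\widetilde{\gamma}(s)) \to h^{-1}(\widetilde{\gamma}(L)) \in M_1 - Y_1$, contradicting $\gamma(s) \to y \in Y_1$. Hence $d_{M_2}(h(x), Y_2) \le d_{M_2}(h(x), \widetilde{\gamma}(L)) \le L = d_{M_1}(x, Y_1)$.

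The reverse inequality is proved by applying exactly the same argument to the isometric diffeomorphism $h^{-1} : M_2 - Y_2 \to M_1 - Y_1$ at the point $h(x)$, yielding $d_{M_1}(x, Y_1) \le d_{M_2}(h(x), Y_2)$. Combining the two inequalities gives the equality of distances, whence $h$ restricts to a bijection of $O(Y_1, t) - Y_1$ onto $O(Y_2, t) - Y_2$ for every $t > 0$. The $G$-invariance of $O(Y_i, t)$ is automatic from the $G$-invariance of $Y_i$ and the fact that $G$ acts by isometries.
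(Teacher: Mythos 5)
Your proof is correct and follows essentially the same route as the paper: both arguments take a minimizing geodesic from $x$ to $Y_1$, push it forward through the length-preserving map $h$, use completeness of $M_2$ to extract a limit, and use continuity of $h^{-1}$ on $M_2 - Y_2$ to conclude that the limit lies in $Y_2$. Your write-up is somewhat more careful than the paper's (which sketches the forward sequence-based version of the same idea and leaves the reverse direction implicit), and phrasing it as equality of distance-to-$Y$ functions is a clean packaging, but the underlying idea is identical.
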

\begin{proof}
Take $x\in O(Y_1,t)-Y_1$, then $d(x, Y_1)<t$ and there is $y\in Y_1$ such that $d(x,y)=d(x,Y_1)<t$. We can find a Cauchy sequence $\{y_n\}$ in $O(Y_1,t)-Y_1$ converging to $y$ according to the existence of minimal geodesic between $x$ and $y$. So $\{h(y_n)\}$ converge to a point $y'$ and $y'$ must be in $Y_2$. Otherwise, there is $y\notin Y_1$ which is a contradiction. Consequently, $d(h(x), Y_2)\leq d(h(x),y')=\lim d(h(x),h(y_n))=\lim d(x,y_n)=d(x,y)<t$.
\end{proof}

 Fix a constant number $s>0$ and let $Z_i=\overline{O(Y_i,s)}$. Lemma \ref{nbhd} guarantees that $h:M_1-Z_1\longrightarrow M_2-Z_2$ is a $G$-equivariant isometric diffeomorphism.

\begin{lem}\label{cutofffunction}
There are cutoff functions $\omega_1$ on $M_1$ and $\omega_2$ on $M_2$ such that the diagram commutative
\[
\xymatrix{
  M_1-Z_1 \ar[rr]^{h} \ar[dr]_{\omega_1}
                &  &    M_2-Z_2 \ar[dl]^{\omega_2}    \\
                & \mathbb{R}                 }
\]
\end{lem}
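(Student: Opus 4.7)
The plan is to fix any cutoff function $\omega_2$ on $M_2$ first, and then to build $\omega_1$ on $M_1$ so that $\omega_1 = \omega_2 \circ h$ on $M_1 - Z_1$. Since $h:M_1-Y_1\to M_2-Y_2$ is a $G$-equivariant diffeomorphism, $\omega_2 \circ h$ is smooth on $M_1-Y_1$, and the main job is to extend it across $Y_1$ to a cutoff function on all of $M_1$ while preserving the normalization $\sum_g \omega_1(gx)^2 = 1$.

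First, I would construct a smooth $G$-invariant bump $\psi_1:M_1\to[0,1]$ with $\psi_1\equiv 0$ on $O(Y_1,s/3)$ and $\psi_1\equiv 1$ on $M_1-\overline{O(Y_1,s/2)}$, which in particular contains $M_1-Z_1$ since $Z_1=\overline{O(Y_1,s)}$. Such a $\psi_1$ is standard, built by smoothing the $G$-invariant distance-to-$Y_1$ function. Let $\nu_1$ be any cutoff function on $M_1$, whose existence is guaranteed by the properness of the $G$-action. Define
\[
f_1(x) := \psi_1(x)\,\omega_2(h(x)) + (1-\psi_1(x))\,\nu_1(x),
\]
where the first term is understood as $0$ on $O(Y_1,s/3)$ (in particular on $Y_1$ itself). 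This $f_1$ is smooth on $M_1$ because the first summand vanishes identically on the open neighborhood $O(Y_1,s/3)$ of $Y_1$ where $h$ is undefined, and its support has compact intersection with every $G$-orbit by transferring this property from $\omega_2$ through $h$ on $M_1-Y_1$ and using the corresponding property of $\nu_1$.

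Next set $\omega_1 := f_1/\sqrt{N_1}$ with $N_1(x) := \sum_{g\in G} f_1(gx)^2$. Using the $G$-invariance of $\psi_1$, the equivariance $h(gx)=gh(x)$, and the nonnegativity of every term, one gets the lower bound
\[
N_1(x) \;\geq\; \psi_1(x)^2\sum_g\omega_2(gh(x))^2 + (1-\psi_1(x))^2\sum_g\nu_1(gx)^2 \;=\; \psi_1(x)^2+(1-\psi_1(x))^2 \;\geq\; \tfrac{1}{2}
\]
for $x\in M_1-Y_1$, while $N_1(x) = 1$ for $x\in Y_1$ since $\psi_1=0$ there. Hence $\omega_1$ is smooth, nonnegative, and satisfies $\sum_g\omega_1(gx)^2=1$. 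The compatibility $\omega_2\circ h = \omega_1$ on $M_1-Z_1$ is then immediate: on this set $\psi_1\equiv 1$ forces $f_1=\omega_2\circ h$, and $G$-invariance of $Z_1$ together with equivariance of $h$ yield $N_1(x)=\sum_g\omega_2(gh(x))^2=1$ by the cutoff property of $\omega_2$.

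The main obstacle is the simultaneous management of smoothness of $f_1$ across $Y_1$ (handled by $\psi_1$ vanishing on an open neighborhood of $Y_1$, so that $\psi_1(x)\omega_2(h(x))$ extends by zero smoothly) and the strict positivity of the normalizer $N_1$ (handled by the background cutoff $\nu_1$, which supplies the full unit mass wherever $\psi_1<1$, keeping $\sqrt{N_1}$ smooth). All remaining verifications, such as the orbit-compactness of $\operatorname{supp}(\omega_1)$, are routine consequences of these design choices.
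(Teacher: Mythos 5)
Your proof is correct, but it takes a genuinely different route from the paper's. The paper constructs $\omega_1$ and $\omega_2$ in parallel: it takes cutoff functions $\theta_i$ on $O(Y_i,s)$ and $\xi_i$ on $M_i-Y_i$ with $\xi_1=\xi_2\circ h$, chooses $G$-invariant partitions of unity $\{\alpha_i,\beta_i\}$ subordinate to $\{O(Y_i,s),\,M_i-Y_i\}$, and sets $\omega_i=\sqrt{\alpha_i\theta_i^2+\beta_i\xi_i^2}$; the normalization $\sum_g\omega_i(gx)^2=\alpha_i(x)+\beta_i(x)=1$ is then purely algebraic, and on $M_i-Z_i$ one has $\alpha_i\equiv 0$, hence $\omega_i=\xi_i$ and commutativity follows at once. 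You instead fix $\omega_2$ arbitrarily, pull it back through $h$, blend it with a background cutoff $\nu_1$ via a $G$-invariant bump $\psi_1$, and renormalize by $\sqrt{N_1}$. This "patch to a prescribed cutoff" scheme is more asymmetric and requires the extra bookkeeping that $N_1$ is smooth, $G$-invariant, bounded below (your $t^2+(1-t)^2\geq 1/2$ argument), and identically $1$ on $M_1-Z_1$ (which is where $\psi_1\equiv 1$ forces $f_1=\omega_2\circ h$ and $G$-invariance of $Z_1$ gives $N_1=1$); all of these checks go through. What the paper's version buys is that the unit-sum condition holds by construction with no normalizer to control; what yours buys is the ability to start from any given $\omega_2$ and adapt only $\omega_1$, which is conceptually closer to an extension problem. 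Both are legitimate; the step you gloss over most quickly, that $\operatorname{supp}(\psi_1\cdot(\omega_2\circ h))$ meets each $G$-orbit in a compact set, does hold because $\psi_1$ kills everything near $Y_1$ and $h$ is a $G$-equivariant homeomorphism $M_1-Y_1\to M_2-Y_2$, but it deserves at least a sentence since pulling compact orbit-intersections back through a map that is only a diffeomorphism off a closed set is the one place the boundary could in principle interfere.
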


\begin{proof}
As state in \cite{Guo2019}, there are cutoff functions $\theta_i$ on $O(Y_i,s)$, $\xi_i$ on $M_i-Y_i$ for $i=1,2$. We can let $\xi_1=\xi_2\circ h$ on $M_1-Y_1$. Take a partition of unit $\{\alpha_1,\beta_1\}$ on $M_1$ subordinate to $\{O(Y_1,s), M_1-Y_1\}$ and a partition of unit $\{\alpha_2,\beta_2\}$ on $M_2$ subordinate to $\{O(Y_2,s), M_2-Y_2\}$ which are $G$-invariant, that is,
\[
\alpha_i(gx)=\alpha_i(x),\forall x\in O(Y_i,s),g\in G,i=1,2;
\]
\[
\beta_i(gx)=\beta_i(x),\forall x\in M_i-Y_i,g\in G,i=1,2.
\]
Define
\[
\omega_1=\sqrt{\alpha_1 \theta_1^2+\beta_1\xi_1^2},
\]
\[
\omega_2=\sqrt{\alpha_2 \theta_2^2+\beta_2\xi_2^2}.
\]
Then $\omega_i\in C^\infty(M_i)$ and their supports have compact intersections with all $G$-orbits. For all $x\in M_i$,
\[
\sum_G \omega_i(gx)^2 =\sum_G \left(\alpha_i (gx)\theta_i(gx)^2+\beta_i(gx)\xi_i(gx)^2 \right)=\alpha_i(x)+\beta_i(x)=1.
\]
On $M_i-Z_i$, there are $\omega_i=\xi_i$ and then the diagram above is commutative.
\end{proof}

Using the cutoff functions $\omega_i$ in lemma \ref{cutofffunction}, there are
\[
\iota_i : \ H_i\longrightarrow \mathcal{H}_i,\ \iota_i(\varphi)(x,g)=\omega_i(g^{-1}x)\varphi(x),
\]
for all $\varphi\in H_i$, $x\in M_i$, $g\in G$. Equivalently,
\[
\iota_i(\varphi)=\sum_{g\in G}\varphi_g\otimes \delta_g,\forall \varphi\in H_i,
\]
where
\[
\varphi_g(x)=\omega_i(g^{-1}x)\varphi(x), \forall x\in M_i.
\]
Recall that $\iota_i$ are $G$-equivariant, isometric embedding \cite{Hochs2019} and they intertwines the actions by $C_0(M)$ on $H_i$ and $\mathcal{H}_i$. Identify $H_i$ with $\iota_i(H_i)$, then $\mathcal{H}_i=H_i\oplus H_i^\bot$ and there are
\[
\rho_i(f)\otimes1=\left(\begin{array}{cc}\rho_i(f) & 0 \\ 0 & *\end{array}\right),
\]
\[
U_{ig}\otimes\lambda_g=\left(\begin{array}{cc} U_{ig}  & 0 \\ 0 & *\end{array}\right),
\]
with respect to this decomposition of $\mathcal{H}_i$ for $f\in C_0(M_i)$, $g\in G$.

Define
  \[
  V:L^2(M_1-Z_1,S_1)\longrightarrow L^2(M_2-Z_2,S_2)
  \]
  \[
  \varphi\longmapsto \bar{h}\circ \varphi\circ h^{-1}
  \]
  which is a $G$-equivariant unitary operator. The zero extension of $V$ from $H_1$ to $H_2$ is defined by
  \[
  V:L^2(M_1,S_1)\longrightarrow L^2(M_2,S_2)
  \]
  \[
  \varphi\longmapsto \bar{h}\circ (\varphi\cdot \chi_{Z_1^c})\circ h^{-1}
  \]
  where $Z_1^c=M_1-Z_1$ and we regard $\bar{h}\circ (\varphi\cdot \chi_{Z_1^c})\circ h^{-1}$ equals to 0 on $Y_2$. Then there is a $G$-equivariant partial isometry $V\otimes 1$ from $H_1\otimes l^2(G)$ to $H_2\otimes l^2(G)$, that is, from $\mathcal{H}_1$ to $\mathcal{H}_2$. 
  \begin{lem}
  The operator $V:H_1\longrightarrow H_2$ has the following property:
  \[
  \rho_2(f)V=V\rho_1(f\circ h),   \forall f\in B(M_2).
  \]Note that we regard $f\circ h$ as 0 on $Y_1$.
  \end{lem}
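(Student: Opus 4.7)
The plan is to verify the identity $\rho_2(f)V=V\rho_1(f\circ h)$ by a direct pointwise computation on the two operators applied to an arbitrary $\varphi\in H_1=L^2(M_1,S_1)$. The computation naturally splits according to the decomposition $M_2=(M_2-Z_2)\sqcup Z_2$, since the definition of $V$ involves the cutoff $\chi_{Z_1^c}$ on the source side and zero extension on the target side. First I would extend $\rho_i$ to $B(M_i)$ using the unique $\sigma$-weakly continuous extension recalled after Definition 2.1, so that $\rho_1(f\circ h)$ makes sense on all of $H_1$ once we declare $f\circ h\equiv 0$ on $Y_1$.

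On $M_2-Z_2$, Lemma \ref{nbhd} (together with the choice $Z_i=\overline{O(Y_i,s)}$) guarantees that $h$ restricts to a $G$-equivariant isometric diffeomorphism $M_1-Z_1\to M_2-Z_2$ covered by the bundle isomorphism $\bar h$. For $y\in M_2-Z_2$, I would compute
\[
(V\rho_1(f\circ h)\varphi)(y)=\bar h\bigl((f\circ h)(h^{-1}(y))\,\varphi(h^{-1}(y))\bigr)=f(y)\,\bar h(\varphi(h^{-1}(y)))=(\rho_2(f)V\varphi)(y),
\]
where the middle equality uses that $f\circ h\circ h^{-1}=f$ on $M_2-Z_2$ and that $\bar h$ is complex-linear on fibers. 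This handles the identity outside $Z_2$.

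On $Z_2$, both sides vanish. Indeed, for $y\in Z_2$ we have $(V\varphi)(y)=0$ by the very definition of the zero extension, so $(\rho_2(f)V\varphi)(y)=f(y)\cdot 0=0$. For the right-hand side, the section $(f\circ h)\varphi\cdot\chi_{Z_1^c}$ is supported in $M_1-Z_1$, hence $V$ applied to it is again extended by zero on $Z_2$. The convention that $f\circ h$ is set to $0$ on $Y_1$ (or more generously on all of $Z_1$) is harmless here because it is multiplied by $\chi_{Z_1^c}$, which already annihilates $Z_1$; this is the only mildly subtle point in the argument, and I would state it explicitly to make the proof self-contained. Combining the two cases yields the desired identity.
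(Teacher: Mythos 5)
Your proof is correct and takes essentially the same approach as the paper's: a direct pointwise computation comparing $\rho_2(f)V\varphi$ and $V\rho_1(f\circ h)\varphi$, using that $\bar h$ is fiberwise linear so scalars pull through. The paper's version is terser (it writes out both sides symbolically and declares them equal without explicitly separating the cases $y\in M_2-Z_2$ and $y\in Z_2$), while you spell out the case analysis and flag the point that the value of $f\circ h$ on $Z_1$ is immaterial because of $\chi_{Z_1^c}$ — a helpful clarification, but not a different argument.
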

  \begin{proof}
  Take an element $\varphi \in L^2(M_1,S_1)$.
  \[
  \rho_2(f)V(\varphi)=\rho_2(f)\left(\bar{h}\circ (\varphi\cdot \chi_{Z_1^c})\circ h^{-1}\right)=f\cdot\left(\bar{h}\circ (\varphi\cdot \chi_{Z_1^c})\circ h^{-1}\right),
  \]
  \[
  V\rho_1(f\circ h)(\varphi)=V\left((f\circ h)\cdot \varphi\right)=\bar{h}\circ \left((f\circ h)\cdot \varphi\cdot \chi_{Z_1^c}\right)\circ h^{-1}.
  \]
  Then
  \[
  \rho_2(f)V=V\rho_1(f\circ h).
  \]
  \end{proof}
  \begin{lem}\label{cmap}
  The following map
  \[
  \Phi :C^*(M_1,\mathcal{H}_1)^G/C^*_{Z_1}( M_1,\mathcal{H}_1)^G\longrightarrow C^*(M_2,\mathcal{H}_2)^G/C^*_{Z_2}( M_2,\mathcal{H}_2)^G,
  \]
  \[
  [T]\longmapsto [(V\otimes 1) T (V^*\otimes 1)],
  \]
  is a well-defined *-isomorphism.
  \end{lem}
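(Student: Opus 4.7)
The plan is to show $\Phi$ is a well-defined $*$-homomorphism and to construct its inverse $\Psi\colon [S]\mapsto [(V^{*}\otimes 1)S(V\otimes 1)]$ by a symmetric argument. Throughout, the partial-isometry relations $(V\otimes 1)^{*}(V\otimes 1)=\chi_{Z_{1}^{c}}\otimes 1$ and $(V\otimes 1)(V\otimes 1)^{*}=\chi_{Z_{2}^{c}}\otimes 1$ will do the bulk of the work.

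For well-definedness, I would take a locally compact operator $T\in C^{*}(M_{1},\mathcal{H}_{1})^{G}$ of finite propagation $R$ and cut it off further away from $Y_{1}$: set $Z_{1}^{R+s}=\overline{O(Y_{1},R+s)}$ and $T_{R}=(\chi_{(Z_{1}^{R+s})^{c}}\otimes 1)\,T\,(\chi_{(Z_{1}^{R+s})^{c}}\otimes 1)$. The difference $T-T_{R}$ is supported within distance $2R$ of $Z_{1}$ and hence lies in $C^{*}_{Z_{1}}(M_{1},\mathcal{H}_{1})^{G}$. For $T_{R}$ itself, conjugation by $V\otimes 1$ preserves the three defining properties of the Roe algebra. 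Equivariance is automatic since $V$ is $G$-equivariant. Local compactness follows from the intertwining $\rho_{2}(f)(V\otimes 1)=(V\otimes 1)\rho_{1}(f\circ h)$ together with the fact that $f\circ h$ has bounded support in $M_{1}$, a consequence of the coarse map $\psi\colon M_{2}\to M_{1}$ in condition (iii) sending bounded sets to bounded sets. The crucial step is finite propagation: every $(x,x')\in\mathrm{supp}(T_{R})$ satisfies $d(x,Y_{1}),\,d(x',Y_{1})>R+s$, so the length-$R$ geodesic in $M_{1}$ between them stays in $\{d(\cdot,Y_{1})>s\}\subset M_{1}-Y_{1}$, where $h$ is a Riemannian isometry; the geodesic therefore pushes forward to a path in $M_{2}$ of the same length, giving $d_{M_{2}}(h(x),h(x'))\leq R$. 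Consequently $\Phi(T_{R})\in C^{*}(M_{2},\mathcal{H}_{2})^{G}$, and $\Phi(T)-\Phi(T_{R})\in C^{*}_{Z_{2}}(M_{2},\mathcal{H}_{2})^{G}$.

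Next I would verify that $\Phi$ sends $C^{*}_{Z_{1}}$ into $C^{*}_{Z_{2}}$, using Lemma \ref{nbhd} (which shows $h$ preserves the distance to $Y_{i}$) to see that an operator supported in a closed $r$-neighborhood of $Z_{1}$ maps to one supported in a neighborhood of $Z_{2}$ of comparable radius. The $*$-homomorphism property in the quotient reduces, via $(V\otimes 1)(V^{*}\otimes 1)=\chi_{Z_{2}^{c}}\otimes 1$, to showing that $(V\otimes 1)T_{1}(\chi_{Z_{1}}\otimes 1)T_{2}(V^{*}\otimes 1)\in C^{*}_{Z_{2}}$; this follows because $T_{1}(\chi_{Z_{1}}\otimes 1)T_{2}$ is locally compact with finite propagation supported in a neighborhood of $Z_{1}$, hence lies in $C^{*}_{Z_{1}}$, and is then transported into $C^{*}_{Z_{2}}$ by the previous step.

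The inverse $\Psi$ is defined and analyzed by completely symmetric arguments, using the adjoint intertwining $V^{*}\rho_{2}(g)=\rho_{1}(g\circ h)V^{*}$ and the fact that $h$ sends bounded subsets of $M_{1}-Y_{1}$ into $M_{2}$ as bounded sets (itself a consequence of $\psi$ being coarse). Finally,
\[\Psi\circ\Phi[T]=\bigl[(\chi_{Z_{1}^{c}}\otimes 1)T(\chi_{Z_{1}^{c}}\otimes 1)\bigr]=[T]\]
in the quotient, since $T-(\chi_{Z_{1}^{c}}\otimes 1)T(\chi_{Z_{1}^{c}}\otimes 1)=(\chi_{Z_{1}}\otimes 1)T+(\chi_{Z_{1}^{c}}\otimes 1)T(\chi_{Z_{1}}\otimes 1)$ is supported within distance $R$ of $Z_{1}$, placing it in $C^{*}_{Z_{1}}$; the composition $\Phi\circ\Psi$ is handled identically. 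The main obstacle is the propagation-preservation step: the cut-off to $(Z_{1}^{R+s})^{c}$ combined with the geodesic-avoidance argument is what lets the merely Riemannian-isometric $h$ descend to a well-defined homomorphism on the quotient algebras, even though $h$ need not extend to a globally distance-preserving map between $M_{1}$ and $M_{2}$.
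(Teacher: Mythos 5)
Your strategy is the same as the paper's: show that conjugation by $V\otimes 1$ sends the relevant operators of $C^*(M_1,\mathcal{H}_1)^G$ into $C^*(M_2,\mathcal{H}_2)^G$ and $C^*_{Z_1}$ into $C^*_{Z_2}$, check multiplicativity modulo $C^*_{Z_2}$ via the defect term $(V\otimes 1)T_1\chi_{Z_1}T_2(V^*\otimes 1)$, and invert by symmetry. Where you genuinely improve on the paper is the finite-propagation step: the paper takes $(y,y')\in\overline{M_2-Z_2}^2$ with $d(y,y')>r$, sets $x=h^{-1}(y)$, $x'=h^{-1}(y')$, and asserts ``Clearly, $d(x,x')>r$.'' This does not follow merely from $h$ being a Riemannian isometry between the open subsets, because a minimal $M_1$-geodesic between $x$ and $x'$ could pass through $Y_1$. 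Your cut-off $T_R=\chi_{(Z_1^{R+s})^c}T\chi_{(Z_1^{R+s})^c}$ followed by the geodesic-avoidance argument (points of $\mathrm{supp}(T_R)$ lie at distance $>R+s$ from $Y_1$, so the length-$R$ geodesic stays at distance $>s$ and can be pushed forward by $h$) honestly establishes that the class of $\Phi(T)$ in the quotient has a finite-propagation representative. This is a real tightening of the paper's argument.

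One caveat applies to both you and the paper: when showing $\mathrm{ad}_{V\otimes 1}$ carries $C^*_{Z_1}$ into $C^*_{Z_2}$ (needed for well-definedness on the quotient and for the multiplicativity defect), your proposal only verifies ``supported near $Z_2$'' and local compactness, not finite propagation. Your cut-off trick is unavailable here because the input is supported near $Z_1$. If $Z_i$ is unbounded, being supported near $Z_2$ alone does not give finite propagation, and the geodesic argument fails since the geodesic may pass through $Y_1$. This is implicitly handled in the paper only through its later unproved remark that the induced map $Z_2\to Z_1$ is a coarse \emph{equivalence} (so $h$ is controlled near the $Y_i$), which is stronger than condition (iii) as literally stated. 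If you want a self-contained proof, you should either invoke that stronger hypothesis explicitly, or observe that operators supported near $Z_1$ with finite propagation can be approximated by operators supported in bounded regions when $Y_1$ is bounded. As stated, this step is a gap you share with the paper rather than introduce.
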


  \begin{proof}



        If $T\in B(\mathcal{H}_1)^G$, then $(V\otimes 1) T (V^*\otimes 1)\in B(\mathcal{H}_2)^G$ obviously.

         Suppose $T\in B(\mathcal{H}_1)^G$ is locally compact, then $(V\otimes 1) T (V^*\otimes 1)\in B(\mathcal{H}_2)^G$ is also locally compact since for every $f\in C_c(M_2)$, there is
        \[
          \begin{array}{l}
            (\rho_2(f)\otimes 1)(V\otimes 1) T (V^*\otimes 1)
           =  (V\otimes 1)(\rho_1((f\circ h)\cdot\chi_{\overline{Z_1^c}})\otimes 1) T (V^*\otimes 1),\\
            (V\otimes 1) T (V^*\otimes 1)(\rho_2(f)\otimes 1)
           =  (V\otimes 1) T (\rho_1(f\circ  h)\otimes 1)(V^*\otimes 1),
          \end{array}
        \]
        and $(f\circ  h)\cdot \chi_{\overline{Z_1^c}}$ is compactly supported.

        Given $T\in B(\mathcal{H}_1)^G$ and prop$(T)<r<\infty$, then $(V\otimes 1) T (V^*\otimes 1)\in B(\mathcal{H}_2)^G$ has finite propagation.
        In fact, it is easy to find that
        \[
        \text{supp}(V\otimes 1) T (V^*\otimes 1)\subseteq \overline{M_2-Z_2}\times\overline{M_2-Z_2}.
        \]
        Take $(y,y')\in \overline{M_2-Z_2}\times\overline{M_2-Z_2}$ and $d(y,y')>r$. There are $x,x'\in \overline{M_1-Z_1}$ such that $h(x)=y$ and $h(x')=y'$. Clearly, $d(x,x')>r$ which means there are open neighborhoods $A$ and $A'$ for $x$ and $x'$ respectively such that $\chi_A T\chi_{A'}=0$. We can suppose that $A$ and $A'$ have compact closure and $d(A,A')>r$. Let $$B=h(A\cap\overline{M_1-Z_1}),$$
        $$B'=h(A'\cap\overline{M_1-Z_1}).$$
        Then $$\chi_B(V\otimes 1)T(V^*\otimes 1)\chi_{B'}=(V\otimes 1)\chi_AT\chi_{A'}(V^*\otimes 1)=0.$$
        Since $B$ and $B'$ are relative open sets in $\overline{M_2-Z_2}$, there are open neighborhoods $C$ and $C'$ for $y$ and $y'$ in $M_2$ such that
        \[
        C\cap\overline{M_2-Z_2}=B,
        \]
        \[
        C'\cap\overline{M_2-Z_2}=B'.
        \]
        Consequently, $$\chi_C(V\otimes 1)T(V^*\otimes 1)\chi_{C'}=\chi_B(V\otimes 1)T(V^*\otimes 1)\chi_{B'}=0.$$
        This shows that
        \[
        (y,y')\notin \text{supp}(V\otimes 1) T (V^*\otimes 1),
        \]
        and
        \[
        \text{prop}(V\otimes 1) T (V^*\otimes 1)\leq r.
        \]

         Assume $T\in B(\mathcal{H}_1)^G$ is supported near $Z_1$, then $(V\otimes 1) T (V^*\otimes 1)\in B(\mathcal{H}_2)^G$ is supported near $Z_2$. In fact, there is $r>0$ such that
        \[
        (\rho_1(f)\otimes 1)T=T(\rho_1(f)\otimes 1)=0, \forall f\in C_c(M_1), d(\text{supp}(f), Z_1)>r.
        \]
        For every $f\in C_c(M_2)$ satisfying $d(\text{supp}(f), Z_2)>r$, we have
        \[
        (\rho_2(f)\otimes1)(V\otimes 1) T (V^*\otimes 1)=(V\otimes 1)(\rho_1(f\circ h)\otimes 1 )T (V^*\otimes 1)=0,
        \]
        \[
        (V\otimes 1) T (V^*\otimes 1)(\rho_2(f)\otimes 1)=(V\otimes 1) T (\rho_1(f\circ h)\otimes 1)(V^*\otimes 1)=0.
        \]

        By now, we have shown that the bounded linear *-preserving function
        $$B(\mathcal{H}_1)^G\longrightarrow B(\mathcal{H}_2)^G$$
        $$T\longmapsto (V\otimes 1) T (V^*\otimes 1)$$ maps $C^*(M_1,\mathcal{H}_1)^G$ to $C^*(M_2,\mathcal{H}_2)^G$ and $C^*_{Z_1}(M_1,\mathcal{H}_1)^G$ to $C^*_{Z_2}(M_2,\mathcal{H}_2)^G$. So $\Phi$ is well-defined and is a bounded linear *-preserving map.

        Take locally compact operators $T,S\in B(H_1)^G$ with propagations less than $r<\infty$. Then
        \[
         (V\otimes 1) TS (V^*\otimes 1)- (V\otimes 1) T (V^*\otimes 1) (V\otimes 1) S (V^*\otimes 1)
        \]
        \[
         =(V\otimes 1)\left( TS-T\chi_{Z_1^c}
          S \right) (V^*\otimes 1)
        \]
        \[
         =(V\otimes 1) T\chi_{Z_1}S (V^*\otimes 1)
        \]
        is $G$-equivariant, locally compact and has finite propagation by previous analysis. For every $f\in C_c(M_2)$, $d(\text{supp}(f), Z_2)>r$, there are
        \[
        (\rho_2(f)\otimes 1)(V\otimes 1) T\chi_{Z_1}S (V^*\otimes 1)=(V\otimes 1) (\rho_1(f\circ  h)\otimes 1)T\chi_{Z_1}S (V^*\otimes 1)=0,
        \]
        \[
        (V\otimes 1) T\chi_{Z_1}S (V^*\otimes 1)(\rho_2(f)\otimes 1)=(V\otimes 1) T\chi_{Z_1}S(\rho_1(f\circ  h)\otimes 1) (V^*\otimes 1)=0,
        \]
        which mean that $(V\otimes 1) T\chi_{Z_1}S(V^*\otimes 1)$ is supported near $Z_2$. Consequently,
        \[ \Phi([T]\cdot[S])=\Phi([T])\cdot\Phi([S]),
        \]
        that is, the map $\Phi$ preserves the product operations on $C^*(M_1,\mathcal{H}_1)^G/C^*_{Z_1}( M_1,\mathcal{H}_1)^G$ and $C^*(M_2,\mathcal{H}_2)^G/C^*_{Z_2}( M_2,\mathcal{H}_2)^G$.

        As a result, $\Phi$ is a *-homomorphism and it is further a *-isomorphism according to the symmetry between $M_1-Z_1$ and $M_2-Z_2$.
  \end{proof}

  \begin{lem}\label{dmap}
  The following map
  \[
   \Psi:D^*(M_1,\mathcal{H}_1)^G/D^*_{Z_1}( M_1,\mathcal{H}_1)^G\longrightarrow D^*(M_2,\mathcal{H}_2)^G/D^*_{Z_2}( M_2 ,\mathcal{H}_2)^G,
  \]
  \[
  [T]\longmapsto [(V\otimes 1) T (V^*\otimes 1)],
  \]
  is a well-defined *-isomorphism.
  \end{lem}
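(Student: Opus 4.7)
The plan is to mirror the proof of Lemma~\ref{cmap}: I will check that the $*$-preserving map $T\mapsto (V\otimes 1)T(V^*\otimes 1)$ sends $D^*(M_1,\mathcal{H}_1)^G$ into $D^*(M_2,\mathcal{H}_2)^G$ and $D^*_{Z_1}(M_1,\mathcal{H}_1)^G$ into $D^*_{Z_2}(M_2,\mathcal{H}_2)^G$, that it is multiplicative modulo the ideal, and that it admits an inverse built from $V^*$ by the symmetric construction. The verifications of equivariance, finite propagation, and being supported near $Z_2$ transfer verbatim from Lemma~\ref{cmap}, and the multiplicativity reduces, via the same identity $TS-TV^*VS=T\chi_{Z_1}S$, to placing $(V\otimes 1)T\chi_{Z_1}S(V^*\otimes 1)$ inside $D^*_{Z_2}(M_2,\mathcal{H}_2)^G$. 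The two genuinely new items are preservation of pseudolocality and preservation of local compactness on $M_i-Z_i$; these are what I would address first.

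For pseudolocality, given $T\in D^*(M_1,\mathcal{H}_1)^G$ and $f\in C_0(M_2)$, the intertwining identity $\rho_2(f)V=V\rho_1(f\circ h)$ (with $f\circ h$ extended by $0$ across $Y_1$) rewrites the commutator as
\[
[\rho_2(f)\otimes 1,\,(V\otimes 1)T(V^*\otimes 1)]=(V\otimes 1)[\rho_1(f\circ h)\otimes 1,\,T](V^*\otimes 1).
\]
This is not immediately compact because $f\circ h$ need not be continuous across $Y_1$, so $\rho_1(f\circ h)$ is only a bounded-Borel, not a $C_0(M_1)$, element. To repair this I would fix a smooth cutoff $\psi:M_2\to[0,1]$ equal to $1$ outside $Z_2$ and vanishing in a neighbourhood of $Y_2$, and split $f=f\psi+f(1-\psi)$. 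The function $(f\psi)\circ h$ extends continuously by $0$ to an element of $C_0(M_1)$, so pseudolocality of $T$ makes its commutator with $T$ compact. The complementary piece $f(1-\psi)$ is supported in $Z_2$, and since $V$ has range inside $L^2(M_2-Z_2,S_2)$, both $(\rho_2(f(1-\psi))\otimes 1)(V\otimes 1)$ and $(V^*\otimes 1)(\rho_2(f(1-\psi))\otimes 1)$ vanish, so its commutator drops out entirely.

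For local compactness on $M_2-Z_2$ of the image of $T\in D^*_{Z_1}(M_1,\mathcal{H}_1)^G$, Lemma~\ref{nbhd} ensures that $h$ is an isometric diffeomorphism between $M_1-Z_1$ and $M_2-Z_2$, so composition with $h$ sends $C_0(M_2-Z_2)$ into $C_0(M_1-Z_1)$; for such $g$ the operator $(\rho_2(g)\otimes 1)(V\otimes 1)T(V^*\otimes 1)=(V\otimes 1)(\rho_1(g\circ h)\otimes 1)T(V^*\otimes 1)$ is compact because $T$ is locally compact on $M_1-Z_1$. The same mechanism settles the leftover multiplicativity check: $T\chi_{Z_1}S$ is pseudolocal because multiplication operators commute, giving $[\rho_1(f),\chi_{Z_1}]=0$ and reducing the commutator with $T\chi_{Z_1}S$ to $[\rho_1(f),T]\chi_{Z_1}S+T\chi_{Z_1}[\rho_1(f),S]$, a sum of compact operators; and it is locally compact on $M_1-Z_1$ because for $f\in C_c(M_1-Z_1)$ the product $\rho_1(f)\chi_{Z_1}$ vanishes, so $\rho_1(f)T\chi_{Z_1}S=[\rho_1(f),T]\chi_{Z_1}S$ is compact.

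Finally, bijectivity of $\Psi$ is obtained from the symmetric construction conjugating by $V^*\otimes 1$ from the $M_2$ side back to the $M_1$ side, exactly as at the end of Lemma~\ref{cmap}. I expect the main obstacle to be the pseudolocality step: it is the only place where the potential discontinuity of $f\circ h$ across $Y_1$ really bites, and the cutoff decomposition $f=f\psi+f(1-\psi)$ combined with the range condition on $V$ is what rescues the argument.
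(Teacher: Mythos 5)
Your proof is correct, but the pseudolocality step takes a genuinely different route from the paper. The paper verifies that $(V\otimes 1)T(V^*\otimes 1)$ is pseudolocal via the \emph{disjoint compact sets} characterization: for disjoint compact $K,F\subseteq M_2$, it pulls them back to disjoint compacts $K'=h^{-1}(K\cap\overline{M_2-Z_2})$, $F'=h^{-1}(F\cap\overline{M_2-Z_2})$ in $M_1$ and writes $\chi_K(V\otimes 1)T(V^*\otimes 1)\chi_F=(V\otimes 1)\chi_{K'}T\chi_{F'}(V^*\otimes 1)$, which is compact since $T$ is pseudolocal. This entirely sidesteps the discontinuity of $f\circ h$ across $Y_1$. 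You instead verify the \emph{commutator} characterization directly, which is the more literal reading of the definition but forces the discontinuity issue to surface; your fix --- splitting $f=f\psi+f(1-\psi)$ with $\psi$ vanishing near $Y_2$ and equal to $1$ off $Z_2$, so that $(f\psi)\circ h$ extends by zero to $C_0(M_1)$ while the $f(1-\psi)$ term is killed by the range condition on $V$ --- is correct (one should note that the extension of $(f\psi)\circ h$ is continuous because, by Lemma~\ref{nbhd}, $h$ carries $O(Y_1,\delta)-Y_1$ to $O(Y_2,\delta)-Y_2$, so vanishing of $f\psi$ near $Y_2$ translates to vanishing of $(f\psi)\circ h$ near $Y_1$). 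Your treatment of local compactness on $M_2-Z_2$ matches the paper, and you actually supply more detail than the paper does on why $T\chi_{Z_1}S$ is pseudolocal and locally compact on $M_1-Z_1$, which the paper compresses into ``by previous analysis.'' The paper's disjoint-sets argument is shorter and needs no auxiliary cutoff; your commutator argument is closer to the primary definition and makes the role of the range of $V$ explicit. Both are sound.
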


  \begin{proof}
   If $T\in B(\mathcal{H}_1)^G$ is pseudolocal, then so is $(V\otimes 1)T(V^*\otimes 1)\in B(\mathcal{H}_2)^G$. In fact, for every two disjoint compact subset $K$ and $F$ of $M_2$, let
   \[
   K'=h^{-1}(K\cap\overline {M_2-Z_2}),
   \]
   \[
   F'=h^{-1}(F\cap\overline {M_2-Z_2}),
   \]
   which are disjoint compact subsets of $M_1$. There is
   \[
   \chi_K(V\otimes 1)T(V^*\otimes 1)\chi_F=(V\otimes 1)\chi_{K'} T\chi_{F'}(V^*\otimes 1)
   \]
   which is a compact operator due to $T$ is pseudolocal.

   Assume $T\in B(\mathcal{H}_1)^G$ is locally compact on $M_1-Z_1$, then $(V\otimes 1)T(V^*\otimes 1)$ is locally compact on $M_2-Z_2$. In fact, for every $f\in C_c(M_2-Z_2)$, the operators
   \[
   (\rho_2(f)\otimes 1)(V\otimes 1)T(V^*\otimes 1)=(V\otimes 1)(\rho_1(f\circ  h)\otimes 1)T(V^*\otimes 1),
   \]
   \[
   (V\otimes 1)T(V^*\otimes 1)(\rho_2(f)\otimes 1)=(V\otimes 1)T(\rho_1(f\circ  h)\otimes 1)(V^*\otimes 1),
   \]
   are compact since $T$ is locally compact on $M_1-Z_1$.

   Combined with the proof of the former lemma, we have known that the bounded linear *-preserving function
        $$B(\mathcal{H}_1)^G\longrightarrow B(\mathcal{H}_2)^G$$
        $$T\longmapsto (V\otimes 1) T (V^*\otimes 1)$$ maps $D^*(M_1,\mathcal{H}_1)^G$ to $D^*(M_2,\mathcal{H}_2)^G$ and $D^*_{Z_1}(M_1,\mathcal{H}_1)^G$ to $D^*_{Z_2}(M_2,\mathcal{H}_2)^G$. So $\Psi$ is well-defined and is a bounded linear *-preserving map.

        Take pseudolocal operators $T,S\in B(H_1)^G$ with propagations. Then
        \[
         (V\otimes 1) TS (V^*\otimes 1)- (V\otimes 1) T (V^*\otimes 1) (V\otimes 1) S (V^*\otimes 1)
        \]
        \[
         =(V\otimes 1)\left( TS-T\chi_{Z_1^c}S\right) (V^*\otimes 1)
        \]
        \[
         =(V\otimes 1) T\chi_{Z_1}S (V^*\otimes 1)
        \]
        is equivariant, pseudolocal, locally compact on $M_2-Z_2$, supported near $Z_2$ and has finite propagation by previous analysis. Consequently,
        \[
         \Psi([T]\cdot[S])=\Psi([T])\cdot\Psi([S]),
        \]
        that is, the map $\Psi$ preserves the product operations on $D^*(M_1,\mathcal{H}_1)^G/D^*_{Z_1}( M_1,\mathcal{H}_1)^G$ and $D^*(M_2,\mathcal{H}_2)^G/D^*_{Z_2}( M_2,\mathcal{H}_2)^G$.

        As a result, the map $\Psi$ is a *-homomorphism and it is further a *-isomorphism according to the symmetry between $M_1-Z_1$ and $M_2-Z_2$.
  \end{proof}

  \begin{lem}\label{V}
  There is the following commutative graph:
  \[
  \xymatrix{
    H_1 \ar[d]_{\iota_1} \ar[r]^{V} &H_2 \ar[d]^{\iota_2} \\
    \mathcal{H}_1\ar[r]^{V\otimes 1} & \mathcal{H}_2.   }
  \]
  Equivalently,
  \[
  V\otimes 1=\left(\begin{array}{cc} V & 0 \\ 0 & *\end{array}\right)
  \]
  according to the decompositions $\mathcal{H}_i=H_i\oplus H_i^\bot$ for $i=1$, $2$.
  \end{lem}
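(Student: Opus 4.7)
The plan is to verify the commutativity $(V \otimes 1) \circ \iota_1 = \iota_2 \circ V$ by a direct pointwise computation on $M_2 \times G$, and then to read off the block-matrix form as a corollary.

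First I would unpack both composites on an arbitrary $\varphi \in H_1$ and an arbitrary point $(y,g) \in M_2 \times G$. Using the definition of $\iota_2$ together with the zero-extended $V$, the left-then-down composite gives
\[
\iota_2(V\varphi)(y,g) = \omega_2(g^{-1}y)\, \bar h\bigl(\varphi(h^{-1}y)\bigr)\, \chi_{Z_1^c}(h^{-1}y)
\]
for $y \in M_2 - Y_2$, and $0$ for $y \in Y_2$. On the other hand, expanding $\iota_1\varphi = \sum_{g'\in G} \varphi_{g'} \otimes \delta_{g'}$ with $\varphi_{g'}(x) = \omega_1(g'^{-1}x)\varphi(x)$, the down-then-right composite satisfies
\[
(V \otimes 1)(\iota_1\varphi)(y,g) = (V\varphi_g)(y) = \bar h\bigl(\omega_1(g^{-1}h^{-1}y)\varphi(h^{-1}y)\bigr)\,\chi_{Z_1^c}(h^{-1}y).
\]

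Next I reconcile the two expressions. For $y \in M_2 - Z_2$, the $G$-equivariance of $h$ yields $h^{-1}(g^{-1}y) = g^{-1} h^{-1}(y)$, and Lemma \ref{cutofffunction} gives $\omega_1 = \omega_2 \circ h$ on $M_1 - Z_1$; combining these,
\[
\omega_1(g^{-1}h^{-1}y) = \omega_1\bigl(h^{-1}(g^{-1}y)\bigr) = \omega_2(g^{-1}y),
\]
so both expressions coincide. For $y \in Z_2$ (in particular $y \in Y_2$) both expressions vanish: $V\varphi$ is zero there by the zero-extension convention, and since $h^{-1}$ carries $Z_2 - Y_2$ into $Z_1$, the factor $\chi_{Z_1^c}(h^{-1}y)$ kills the right-hand side. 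This establishes $\iota_2 \circ V = (V \otimes 1) \circ \iota_1$.

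Finally, the block-matrix statement follows. The commutative square says $V \otimes 1$ sends $\iota_1(H_1)$ into $\iota_2(H_2)$ and there coincides with $V$, which makes the $(1,1)$-entry equal to $V$ and the $(2,1)$-entry zero. Running the symmetric computation with the roles of $M_1$ and $M_2$ swapped (replacing $V$ by $V^*$) gives that $V^* \otimes 1 = (V \otimes 1)^*$ sends $H_2$ into $H_1$; taking adjoints, the $(1,2)$-entry of $V \otimes 1$ vanishes, while the $(2,2)$-entry remains unspecified, represented by $*$. The only point requiring mild care is tracking the places where $h^{-1}$ is undefined and verifying that the zero-extension conventions match on the $Z_1/Z_2$ complements, so there is no substantial obstacle; the proof is a direct calculation.
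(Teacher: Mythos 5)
Your proof is correct and takes essentially the same approach as the paper's: a pointwise comparison of $(V\otimes 1)\circ\iota_1$ and $\iota_2\circ V$, with the key step being $\omega_1(g^{-1}h^{-1}y)=\omega_1\bigl(h^{-1}(g^{-1}y)\bigr)=\omega_2(g^{-1}y)$ via the $G$-equivariance of $h$ and the compatibility $\omega_1=\omega_2\circ h$ from Lemma \ref{cutofffunction}, together with the observation that both composites vanish on $Z_2$. You are somewhat more explicit than the paper in justifying the vanishing of the $(1,2)$-entry of the block decomposition by invoking the symmetric computation for $V^*$, a point the paper treats as immediate when it asserts the two formulations are ``equivalent.''
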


  \begin{proof}
  Take an element $\varphi\in H_1$, then
  \[
  \varphi\overset{\iota_1}{\longmapsto}\sum_{g\in G}\varphi_g\otimes \delta_g \overset{V\otimes 1}{\longmapsto} \sum_{g\in G}V\varphi_g\otimes \delta_g;
  \]
  \[
  \varphi\overset{V}{\longmapsto} \psi \overset{\iota_2}{\longmapsto} \sum_{g\in G} \psi_g \otimes \delta_g.
  \]
  Here
  \[
  \begin{array}{cc}
  \varphi_g(x)=\omega_1(g^{-1}x)\varphi(x), \forall x\in M_1; & V\varphi_g=\bar{h}\circ (\varphi_g \cdot \chi_{Z_1^c})\circ h^{-1}; \\
  \psi=\bar h\circ (\varphi \cdot \chi_{Z_1^c})\circ h^{-1}; &   \psi_g(y)=\omega_2(g^{-1}y)\psi(y),\forall y\in M_2.\\
  \end{array}
  \]
  Just need to check that $V\varphi_g=\psi_g$ for all $g\in G$. Obviously, we can know that $V\varphi_g=\psi_g=0$ on $Z_2$. For every $y\in M_2-Z_2$, there is a unique $x\in M_1-Z_1$ such that $h(x)=y$, then
  \[
  V\varphi_g(y)=\bar{h}\circ \varphi_g(x)=\bar h (\omega_1(g^{-1}x)\varphi(x)) =\omega_1(g^{-1}x)\bar h (\varphi(x)),
  \]
  \[
  \psi_g(y)=\omega_2(g^{-1}y)\psi(y)=\omega_2(g^{-1}y)\bar h(\varphi(x))=\omega_1(g^{-1}x)\bar h(\varphi(x)).
  \]
  As a result, $V\varphi_g=\psi_g$ on $M_2$ for every $g\in G$ and the above diagram is commutative.
  \end{proof}

  \begin{lem}\label{c0function}
  For every $f\in C_0(\mathbb{R})$, there is $\Phi[f(D_1)\oplus 0]=[f(D_2)\oplus 0]$.
  \end{lem}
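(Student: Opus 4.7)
The plan is to approximate $f$ by functions with compactly supported Fourier transform, exploit finite propagation speed of the wave operators $e^{itD_i}$, and read off the desired ideal containment from uniqueness of solutions to the wave equation on the overlap region.

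First, $\Phi$ is norm-continuous, being a $\ast$-isomorphism by Lemma \ref{cmap}, so it suffices to verify the identity $\Phi[f(D_1)\oplus 0]=[f(D_2)\oplus 0]$ for $f$ in the uniformly dense subset $\mathcal{F}:=\{f\in C_0(\mathbb{R}):\operatorname{supp}\hat f\text{ is compact}\}$. For $f\in\mathcal{F}$ with $\operatorname{supp}\hat f\subseteq[-R,R]$, the Fourier representation $f(D_i)=\frac{1}{2\pi}\int_{-R}^{R}\hat f(t)e^{itD_i}\,dt$ combined with unit propagation speed of the Dirac wave equation on the complete manifolds $M_i$ gives $\operatorname{prop}(f(D_i))\le R$. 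Since $V$ has propagation zero and Lemma \ref{V} identifies $(V\otimes 1)(f(D_1)\oplus 0)(V^*\otimes 1)$ with $(Vf(D_1)V^*)\oplus 0$, it is enough to show that $A:=Vf(D_1)V^*-f(D_2)\in C^*_{Z_2}(M_2,H_2)^G$, for the embedding $\oplus\,0$ will then place $A\oplus 0$ in $C^*_{Z_2}(M_2,\mathcal{H}_2)^G$.

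The heart of the argument is to identify $Vf(D_1)V^*$ with $f(D_2)$ on sections supported far from $Z_2$. Fix $\varepsilon>0$ and take $\phi\in L^2(M_2,S_2)$ with $\operatorname{supp}\phi\subseteq M_2-B(Z_2,R+\varepsilon)$. By Lemma \ref{nbhd}, $V^*\phi$ is supported in $M_1-B(Z_1,R+\varepsilon)$, and finite propagation speed places $e^{itD_1}V^*\phi$ inside $M_1-B(Z_1,\varepsilon)\subseteq M_1-Z_1$ for $|t|\le R$. The two paths $t\mapsto Ve^{itD_1}V^*\phi$ and $t\mapsto e^{itD_2}\phi$ agree at $t=0$, and because $\bar h$ intertwines $D_1$ with $D_2$ on $M_i-Z_i$, both satisfy $\partial_t w=iD_2 w$ on $M_2$ throughout $|t|\le R$; essential self-adjointness of $D_2$ on the complete manifold $M_2$ forces uniqueness, so the paths coincide. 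Integrating against $\hat f(t)$ yields $Vf(D_1)V^*\phi=f(D_2)\phi$, hence $A\phi=0$; passing to adjoints gives $A^*\phi=0$, and together these force $A=\chi_{B(Z_2,R+\varepsilon)}A\chi_{B(Z_2,R+\varepsilon)}$, i.e.\ $A$ is supported near $Z_2$.

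To conclude, $A$ is equivariant, has propagation at most $R$, is locally compact (since $f\in C_0(\mathbb{R})$ makes $f(D_i)$ locally compact), and is supported near $Z_2$, so $A\in C^*_{Z_2}(M_2,H_2)^G$; hence $A\oplus 0\in C^*_{Z_2}(M_2,\mathcal{H}_2)^G$ and $\Phi[f(D_1)\oplus 0]=[f(D_2)\oplus 0]$ for every $f\in\mathcal{F}$. Passing to a uniform limit extends the identity to all $f\in C_0(\mathbb{R})$. The main technical obstacle I anticipate is the rigorous uniqueness step across the ideal boundary: one must verify that $Ve^{itD_1}V^*\phi$, a priori only an $L^2$ section of $S_2$ obtained by transport across $h$ and zero extension to $Z_2$, is genuinely an $L^2$ solution of the wave equation on all of $M_2$, and this is exactly where finite propagation speed keeping the support inside $M_2-Z_2$ together with the partial-isometry structure of $V$ in Lemma \ref{V} is indispensable.
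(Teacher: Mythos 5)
Your proof is correct and takes essentially the same route as the paper: reduce to $f$ with compactly supported Fourier transform, write $f(D_i)=\frac{1}{2\pi}\int\hat f(t)\,e^{itD_i}\,dt$, and exploit unit propagation speed together with the fact that $V$ intertwines the wave groups away from the $Z_i$. The only organizational difference is that the paper introduces cutoff functions $\psi_i$ with $\psi_1=\psi_2\circ h$ to split $f(D_i)=f(D_i)\psi_i+f(D_i)(1-\psi_i)$ and isolate the summand lying in $C^*_{Z_i}(M_i)^G$, whereas you directly verify that $Vf(D_1)V^*-f(D_2)$ annihilates sections supported far from $Z_2$ and hence lies in $C^*_{Z_2}(M_2,H_2)^G$.
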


  \begin{proof}
  It is known that $f(D_i)\in C^*(M_i,H_i)^G$ for $f\in C_0(\mathbb{R})$ and $i=1,2$ (cf. \cite{Higson2000}, \cite{Rufus}). So $f(D_i)\oplus 0$ are in $ C^*(M_i,\mathcal{H}_i)^G$ under the map
  \[
  \oplus \ 0: C^*(M_i,H_i)^G\longrightarrow C^*(M_i,\mathcal{H}_i)^G.
  \]
  Just need to prove that $$\Phi[f(D_1)\oplus 0]=[f(D_2)\oplus 0],$$ for every $f\in C_0(\mathbb{R})$ which has compactly supported Fourier transform since this kind of functions comprise a dense subset of $C_0(\mathbb{R})$.

  Suppose $f\in C_0(\mathbb{R})$ has Fourier transform supported in $(-r,r)$. Take smooth functions
  \[
  \psi_i:M_i\rightarrow [0,1]
  \]
  for $i=1,2$ satisfying
  \[
  \psi_i|O(Z_i,r)\equiv 1,\text{supp}(\psi_i)\subseteq O(Z_i,2r),
  \]
  and
  \[
  \psi_1=\psi_2\circ h \text{  on  } M_1-Z_1.
  \]
  We have the following decomposition
  \[
  f(D_i)=f(D_i)\rho_i(\psi_i)+f(D_i)(1-\rho_i(\psi_i)).
  \]
  The fact
  \[
  f(D_i)\rho_i(\psi_i)\in C^*_{Z_i}(M_i, H_i)^G, \ \ i=1,2,
  \]
  holds since
  \[
  \text{prop}\left(f(D_i)\right)<r, \ \ i=1,2.
  \]
  So
  \[
  (f(D_i)\rho_i(\psi_i))\oplus 0\in  C^*_{Z_i}(M_i, \mathcal{H}_i)^G.
  \]
  Then
  \[
  [f(D_i)\oplus 0]=[\left(f(D_i)\rho_i(1-\psi_i)\right) \oplus 0].
  \]
  According to the definition of the operator $V$ and the relationship between $D_1$ and $D_2$, $\psi_1$ and $\psi_2$, there is
  \[
    V \text{e}^{itD_1}\rho_1(1-\psi_1)V^*=  V\text{e}^{itD_1} V^*V\rho_1(1-\psi_1)V^*=  \text{e}^{itD_2}\rho_2(1-\psi_2),
  \]
  for $t\in (-r,r)$.
  This tell us that
  \[
  \begin{array}{ll}
  & \langle f(D_2)\rho_2(1-\psi_2)x\ ,\ y\rangle \\
  =&\frac{1}{2\pi}\displaystyle \int^r_{-r} \widehat{f}(t)\langle\text{e}^{itD_2}\rho_2(1-\psi_2)x,y\rangle dt \\
  =& \frac{1}{2\pi}\displaystyle\int^r_{-r} \widehat{f}(t)\langle\text{e}^{itD_1}\rho_1(1-\psi_1)V^*x, V^*y\rangle dt \\
  =& \langle f(D_1)\rho_1(1-\psi_1)V^*x\ ,\ V^*y\rangle \\
  =& \langle V f(D_1)\rho_1(1-\psi_1)V^*x\ ,\ y\rangle
  \end{array}
  \]
  for every $x,y\in H_2$. It concludes that
  \[
  f(D_2)\rho_2(1-\psi_2)=Vf(D_1)\rho_1(1-\psi_1)V^*\in B(H_2).
  \]
  According to lemma \ref{V}, the following equations hold
  \[
  \begin{array}{lll}

  \Phi[f(D_1)\oplus 0] & = & \Phi[(f(D_1)\rho_1(1-\psi_1))\oplus 0]\\

   & = & [(Vf(D_1)\rho_1(1-\psi_1)V^*)\oplus 0]\\

   & = & [(f(D_2)\rho_2(1-\psi_2))\oplus 0]\\

   & = & [f(D_2)\oplus 0].

  \end{array}
  \]
  \end{proof}

  \begin{lem}\label{nomalizingfunction}
  For every normalizing function $\chi$, there is $\Psi[\chi(D_1)\oplus 0]=[\chi(D_2)\oplus 0]$.
  \end{lem}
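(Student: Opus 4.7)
\emph{Plan.} The plan is to mirror the proof of Lemma~\ref{c0function}, with an extra reduction step to handle the fact that a normalizing function $\chi$ lies in $C_b(\mathbb{R})$ rather than $C_0(\mathbb{R})$.

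First I would reduce to a single normalizing function. For any two normalizing functions $\chi$ and $\chi_0$, the difference $\chi-\chi_0$ lies in $C_0(\mathbb{R})$ since both tend to $\pm 1$ at $\pm\infty$. Applying Lemma~\ref{c0function} to $\chi-\chi_0$ gives
\[
(V\otimes 1)\bigl((\chi-\chi_0)(D_1)\oplus 0\bigr)(V^*\otimes 1)-(\chi-\chi_0)(D_2)\oplus 0\in C^*_{Z_2}(M_2,\mathcal{H}_2)^G\subseteq D^*_{Z_2}(M_2,\mathcal{H}_2)^G,
\]
so it will be enough to verify the lemma for one convenient $\chi_0$, which I would take with $\chi_0^2-1$ compactly supported in $(-\varepsilon,\varepsilon)$.

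Next I would reproduce the cutoff construction of Lemma~\ref{c0function}: pick $r>0$ and smooth cutoffs $\psi_i\colon M_i\to[0,1]$ with $\psi_i\equiv 1$ on $O(Z_i,r)$, $\mathrm{supp}(\psi_i)\subseteq O(Z_i,2r)$, and $\psi_1=\psi_2\circ h$ on $M_1-Z_1$, and decompose $\chi_0(D_i)=\chi_0(D_i)\rho_i(\psi_i)+\chi_0(D_i)\rho_i(1-\psi_i)$. The non-localized piece $\chi_0(D_i)\rho_i(1-\psi_i)$ will be controlled by the wave-equation intertwining $Ve^{itD_1}\rho_1(1-\psi_1)V^*=e^{itD_2}\rho_2(1-\psi_2)$ valid for $|t|<r$, proved exactly as in Lemma~\ref{c0function}: I would approximate $\chi_0$ by $\chi_0^{(R)}=\chi_0*\phi_R$ where $\widehat{\phi_R}\in C_c^\infty((-R,R))$ for some $R<r$, so that Fourier inversion yields $V\chi_0^{(R)}(D_1)\rho_1(1-\psi_1)V^*=\chi_0^{(R)}(D_2)\rho_2(1-\psi_2)$, and the residual $\chi_0-\chi_0^{(R)}\in C_0(\mathbb{R})$ would be absorbed via Lemma~\ref{c0function}. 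For the localized piece $\chi_0(D_i)\rho_i(\psi_i)\oplus 0$ I would invoke the curvature hypothesis: since $\chi_0^2-1$ is supported in $(-\varepsilon,\varepsilon)$ and $\mathcal{R}_{ix}\geq\varepsilon^2$ on $M_i-Z_i$, the Bochner--Weitzenbock formula forces $(\chi_0^2-1)(D_i)\in C^*_{Z_i}(M_i,H_i)^G$, and combining this with the pseudolocality of $\chi_0(D_i)$ and a finite-propagation approximation places $\chi_0(D_i)\rho_i(\psi_i)\oplus 0$ together with its $(V\otimes 1)$-conjugate in $D^*_{Z_i}(M_i,\mathcal{H}_i)^G$. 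Assembling the two pieces through Lemma~\ref{V} would deliver the identity modulo $D^*_{Z_2}(M_2,\mathcal{H}_2)^G$.

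The hard part will be the finite-propagation approximation of $\chi_0(D_i)$: because $\chi_0\notin L^1(\mathbb{R})$, the Fourier representation $\chi_0(D_i)=\frac{1}{2\pi}\int\widehat{\chi_0}(t)e^{itD_i}\,dt$ is only distributional, so one must carefully peel off a $C_0$-tail (handled by Lemma~\ref{c0function} and the closedness of the ideal $D^*_{Z_2}$) from a band-limited principal part (handled by the wave-equation argument). It will also be delicate to verify that the localized piece genuinely lies in the closed ideal $D^*_{Z_i}$; here the positivity of curvature outside $Z_i$, via the Bochner--Weitzenbock formula, is what provides the local compactness on $M_i-Z_i$ needed to close the argument.
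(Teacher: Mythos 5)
Your opening reduction -- observing that the difference of two normalizing functions lies in $C_0(\mathbb{R})$, invoking Lemma~\ref{c0function}, and thereby reducing to a single well-chosen $\chi_0$ -- is correct and in fact makes explicit a step the paper's proof only gestures at when it says ``suppose $\widehat{\chi}$ is supported in $(-r,r)$.''

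The rest of the argument, however, has a genuine gap in two places. First, Lemma~\ref{nomalizingfunction} lives in Section~3, where the data are just $(M_i,S_i,D_i,Y_i,h,q)$; the curvature positivity $\mathcal{R}_{ix}\geq\varepsilon^2 I$ outside $Z_i$ is not a hypothesis here -- it only appears in the theorem of Section~4. So you cannot ``invoke the curvature hypothesis'' or choose $\chi_0$ with $\chi_0^2-1$ supported in $(-\varepsilon,\varepsilon)$; the $\chi_0$ you want is one with $\widehat{\chi_0}$ compactly supported, which exists unconditionally. Second, even granting such a hypothesis, the claim that the ``localized piece'' $\chi_0(D_i)\rho_i(\psi_i)$ lies in $D^*_{Z_i}(M_i,\mathcal{H}_i)^G$ is false in general: that ideal requires local compactness on $M_i-Z_i$, and $\chi_0(D_i)$ is merely pseudolocal, so for $f\in C_c(M_i-Z_i)$ the operator $f\,\chi_0(D_i)\rho_i(\psi_i)$ need not be compact. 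Knowing $(\chi_0^2-1)(D_i)\in C^*_{Z_i}$ makes $\chi_0(D_i)^2-1$ locally compact on $M_i-Z_i$, not $\chi_0(D_i)$ itself. What must be shown is not that each summand lies in $D^*_{Z_i}$ but that the \emph{difference} $T=V\chi_0(D_1)V^*-\chi_0(D_2)$ lies in $D^*_{Z_2}(M_2,H_2)^G$, and the only genuinely new point beyond Lemma~\ref{c0function} is local compactness of $T$ on $M_2-Z_2$.

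The paper's device for this last point is the one your plan is missing: for $f\in C_c(M_2-Z_2)$ set $r'=d(\mathrm{supp}(f),Z_2)>0$ and pick an \emph{auxiliary} normalizing function $\eta$ with $\widehat{\eta}$ supported in $(-r',r')$. The wave-equation intertwining then gives $\bigl(V\eta(D_1)V^*-\eta(D_2)\bigr)\rho_2(f)=0$ and $\rho_2(f)\bigl(V\eta(D_1)V^*-\eta(D_2)\bigr)=0$. Since $\tau:=\chi_0-\eta\in C_0(\mathbb{R})$, the operators $\tau(D_i)$ are locally compact, and therefore $\rho_2(f)T=\rho_2(f)\bigl(V\tau(D_1)V^*-\tau(D_2)\bigr)$ and $T\rho_2(f)$ are compact. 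Because $\eta$ is allowed to depend on $f$, this handles every $f$ in $C_c(M_2-Z_2)$ and closes the argument without any curvature input. Your plan as written has no mechanism to produce this local compactness, so it does not establish the lemma.
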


  \begin{proof}
  It is known that $\chi(D_i)\in D^*(M_i, H_i)^G$ for $i=1,2$ (cf. \cite{Higson2000}, \cite{Rufus}), which concludes $\chi(D_i)\oplus 0\in D^*(M_i, \mathcal{H}_i)^G$.

  Suppose the normalizing function $\chi$ satisfies its distributional Fourier transform $\widehat{\chi}$ is supported in $(-r,r)$. Denote by $T$ the operator
  \[
  V\chi(D_1)V^*-\chi(D_2).
  \]
  It could be derived that $T\oplus 0$ is pseudolocal operator with propagation less than $r$ from the proof of lemma \ref{cmap} and lemma \ref{dmap}. The only thing we need to do is to prove that $T$ is supported near $Z_2$ and locally compact on $M_2-Z_2$ from which we can get that $T\oplus 0$ is also supported near $Z_2$ and locally compact on $M_2-Z_2$. These can deduce that
  \[
  T\oplus 0\in D^*_{Z_2}(M_2,\mathcal{H}_2)^G,
  \]
  and
  \[
  \Psi[\chi(D_1)\oplus 0]=[(V\chi(D_1)V^*)\oplus 0]=[\chi(D_2)\oplus 0].
  \]

  The operator $T$ is supported near $Z_2$ due to the fact that
  \[
  \rho_2(f)T=0,T\rho_2(f)=0,
  \]
  \[
  \forall f\in C_c(M_2) \text{  and } d(\text{supp}(f),Z_2)>r,
  \]
  which can be proved by similar method in lemma \ref{c0function}.

  For any $f\in C_c(M_2-Z_2)$, there is $r'>0$ such that
  \[
  d(\text{supp}(f),Z_2)>r'.
  \]
  Take a normalizing function $\eta$ satisfying $\widehat \eta$ is supported in $(-r',r')$. Similarly, there are
  \[
  \left(V\eta(D_1)V^*-\eta(D_2)\right)\rho_2(f)=0,
  \]
  \[
  \rho_2(f)\left(V\eta(D_1)V^*-\eta(D_2)\right)=0.
  \]
  Let $\tau=\chi-\eta$ which is in $C_0(\mathbb{R})$. Then
  \[
  \begin{array}{lll}
   T& =&  V\chi(D_1)V^*-\chi(D_2) \\
   & = &V\eta(D_1)V^*-\eta(D_2)+V\tau(D_1)V^*-\tau(D_2).
  \end{array}
  \]
  Consequently,
  \[
  \rho_2(f)T=\rho_2(f)V\tau(D_1)V^*-\rho_2(f)\tau(D_2),
  \]
  \[
  T\rho_2(f)=V\tau(D_1)V^*\rho_2(f)-\tau(D_2)\rho_2(f),
  \]
  are all compact operators since $\tau(D_1)$ and $\tau(D_2)$ are locally compact on $M_2$. Thus $T$ is locally compact on $M_2-Z_2$.

  The lemma has been proved.
  \end{proof}


  Define $C^*$-algebras as follows
  $$\mathcal{A}=\left\{(T_1,T_2)\in C^*(M_1,\mathcal{H}_1)^G\oplus C^*(M_2,\mathcal{H}_2)^G:\Phi[T_1]=[T_2]\right\},
  $$
  $$
  \mathcal{B}=\left\{(S_1,S_2)\in D^*(M_1,\mathcal{H}_1)^G\oplus D^*(M_2,\mathcal{H}_2)^G:\Psi [S_1]=[S_2]\right\},
  $$
  \begin{lem}
  $\mathcal{A}$ is a closed ideal in $\mathcal{B}$.
  \end{lem}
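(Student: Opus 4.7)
The plan is to verify three things in sequence: that $\mathcal{A}\subseteq\mathcal{B}$ as sets, that $\mathcal{A}$ is norm-closed in $\mathcal{B}$, and that $\mathcal{A}$ is a two-sided ideal in $\mathcal{B}$. For the inclusion, any $(T_1,T_2)\in\mathcal{A}$ has components $T_i\in C^*(M_i,\mathcal{H}_i)^G\subseteq D^*(M_i,\mathcal{H}_i)^G$, and the defining condition $(V\otimes 1)T_1(V^*\otimes 1)-T_2\in C^*_{Z_2}(M_2,\mathcal{H}_2)^G$ lies a fortiori in $D^*_{Z_2}(M_2,\mathcal{H}_2)^G$, giving $\Psi[T_1]=[T_2]$. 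Closedness follows because any $\mathcal{B}$-norm limit of $\mathcal{A}$-elements has components in $C^*(M_i,\mathcal{H}_i)^G$ (norm-closed in $D^*(M_i,\mathcal{H}_i)^G$) and a difference that is a norm limit of elements of $C^*_{Z_2}(M_2,\mathcal{H}_2)^G$ (norm-closed as well), so all three conditions defining $\mathcal{A}$ persist in the limit. Both steps are essentially formal.

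The main work is the ideal property. Given $(S_1,S_2)\in\mathcal{B}$ and $(T_1,T_2)\in\mathcal{A}$, each $S_iT_i$ lies in $C^*(M_i,\mathcal{H}_i)^G$ since $C^*$ is an ideal in $D^*$. Writing $S_2=(V\otimes 1)S_1(V^*\otimes 1)+E$ with $E\in D^*_{Z_2}(M_2,\mathcal{H}_2)^G$ and $T_2=(V\otimes 1)T_1(V^*\otimes 1)+F$ with $F\in C^*_{Z_2}(M_2,\mathcal{H}_2)^G$, expanding $S_2T_2$ and using the identity $(V^*\otimes 1)(V\otimes 1)=\chi_{Z_1^c}\otimes 1$ (immediate from the construction of $V$ as the zero extension of a unitary between $M_1-Z_1$ and $M_2-Z_2$), I obtain
\[
(V\otimes 1)S_1T_1(V^*\otimes 1)-S_2T_2=(V\otimes 1)S_1\chi_{Z_1}T_1(V^*\otimes 1)-(V\otimes 1)S_1(V^*\otimes 1)F-E(V\otimes 1)T_1(V^*\otimes 1)-EF.
\]
Each of the latter three terms is a product of type $D^*\cdot C^*_{Z_2}$ or $D^*_{Z_2}\cdot C^*$, and a finite-propagation approximation of both factors shows each such product lies in $C^*_{Z_2}(M_2,\mathcal{H}_2)^G$: the representative products are pseudolocal-times-locally-compact (hence locally compact), have finite propagation, and inherit support near $Z_2$ from whichever factor already has it.

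The remaining ``commutator'' term is handled by showing $S_1\chi_{Z_1}T_1\in C^*_{Z_1}(M_1,\mathcal{H}_1)^G$ directly on representatives: for $S_1$ pseudolocal of finite propagation and $T_1$ locally compact of finite propagation, the decomposition $S_1\chi_{Z_1}T_1=S_1T_1-S_1\chi_{Z_1^c}T_1$ exhibits local compactness (each summand is pseudolocal-times-locally-compact), the middle multiplication operator $\chi_{Z_1}$ forces the support into a bounded neighborhood of $Z_1$, and finite propagation is immediate. Passing to norm limits and then conjugating by $V\otimes 1$ deposits the result into $C^*_{Z_2}(M_2,\mathcal{H}_2)^G$ by exactly the argument used in the proof of Lemma \ref{cmap}. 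The case of right multiplication $(T_1S_1,T_2S_2)$ is symmetric. The main obstacle I anticipate is the bookkeeping in these approximation arguments, in particular verifying the support-near-$Z_i$ estimates at each step while keeping careful track of which factor is pseudolocal versus locally compact.
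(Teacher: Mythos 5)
Your proposal is correct and essentially reproduces the paper's argument: both reduce the ideal property to the ``commutator'' term $T_1\chi_{Z_1}S_1\in C^*_{Z_1}$ (conjugated by $V\otimes 1$ into $C^*_{Z_2}$) plus cross-terms that land in $C^*_{Z_2}$ because it is an ideal in $D^*(M_2,\mathcal{H}_2)^G$, using the identity $(V^*\otimes 1)(V\otimes 1)=\chi_{Z_1^c}\otimes 1$. Your four-term decomposition is the fully expanded form of the paper's three-term one (substituting $T_2=(V\otimes 1)T_1(V^*\otimes 1)+F$ and $S_2=(V\otimes 1)S_1(V^*\otimes 1)+E$ into the paper's identity gives exactly yours), and where the paper dispatches right multiplication via the $*$-operation you invoke symmetry, both valid.
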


  \begin{proof}
  If $(T_1, T_2)\in \mathcal{A}$, then $(V\otimes 1)T_2(V^*\otimes 1)-T_2$ is in $C^*_{Z_2}(M_2,\mathcal{H}_2)^G$ which is contained in $D^*_{Z_2}(M_2,\mathcal{H}_2)^G$. It concludes that $(T_1, T_2)$ is in $\mathcal{B}$ and $\mathcal{A}$ is a $C^*$-subalgebra of $\mathcal{B}$.

  Take $(T_1, T_2)\in \mathcal{A}$ and $(S_1, S_2)\in \mathcal{B}$. Then
  \[
  {\begin{array}{lll}
  (V\otimes 1)T_1S_1(V^*\otimes 1)-T_2S_2 & = & (V\otimes 1)T_1\chi_{Z_1^c}S_1(V^*\otimes 1) \\
   &
  +&\big((V\otimes 1)T_1(V^*\otimes 1)-T_2\big)(V\otimes 1)S_1(V^*\otimes 1)\\
   &+&T_2\big((V\otimes 1)S_1(V^*\otimes 1)-S_2\big).
  \end{array}}
  \]
  The analysis in lemma \ref{cmap} and lemma \ref{dmap} can show that the three terms on the right side are all in $C^*_{Z_2}(M_2,\mathcal{H}_2)^G$, which means $(T_1, T_2)\cdot(S_1, S_2)$ is in $\mathcal{A}$. Similarly, $(S_1, S_2)\cdot(T_1, T_2)$ is also in $\mathcal{A}$ which can be shown by using *-operation.

  As a result, $\mathcal{A}$ is a closed ideal in $\mathcal{B}$.
  \end{proof}

  The exact sequence of $C^*$-algebras
  \[
  \xymatrix@C=0.5cm{
    0 \ar[r] & \mathcal{A} \ar[rr] && \mathcal{B} \ar[rr] && \mathcal{B}/\mathcal{A} \ar[r] & 0, }
  \]
  induces a boundary map in $K$-theory
  \[
  \partial: K_{m+1}(\mathcal{B}/\mathcal{A})\longrightarrow K_m(\mathcal{A}),
  \]
  where $m=\dim M_1=\dim M_2$.

  By our assumption, the map $Z_2\longrightarrow Z_1$ defined by $h^{-1}$ on $Z_2-Y_2$ and $q$ on $Y_2$ is a $G$-equivariant coarse equivalence. Since $\chi_{Z_i}\mathcal{H}_i=L^2(Z_i, S_i)\otimes l^2(G)$ are locally free geometric $Z_i$-$G$ modules in a natural way, the map $Z_2\longrightarrow Z_1$ has a $G$-equivariant isometric cover $Q:\chi_{Z_2}\mathcal{H}_2\longrightarrow \chi_{Z_1}\mathcal{H}_1$. The existence of this $Q$ can be guaranteed by the construction in Chapter 4 \cite{Rufus}. Define
  \[
  W=\left(\begin{array}{cc}
  Q&  0 \\
  0 &V^*\otimes 1
  \end{array}\right) :\mathcal{H}_2\longrightarrow \mathcal{H}_1,
  \]
  with respect to the decomposition $\mathcal{H}_i=\chi_{Z_i}\mathcal{H}_i\oplus \chi_{Z_i^c}\mathcal{H}_i$ for $i=1,2$, then $W$ is a $G$-equivariant isometric cover for $M_2\longrightarrow M_1$. Furtherly, there is a *-homomorphism
  \[
  C^*(M_2,\mathcal{H}_2)^G\longrightarrow C^*(M_1,\mathcal{H}_1),\ T\longmapsto WTW^*.
  \]
  Using this operator $W$, the exact sequence of $C^*$-algebras
  \[\xymatrix@C=0.5cm{
    0 \ar[r] & C^*_{Z_1}(M_1,\mathcal{H}_1)^G \ar[rr] && \mathcal{A} \ar[rr] && C^*(M_2,\mathcal{H}_2)^G \ar[r] & 0 }
  \]
  \[
  T\mapsto (T,0) \ \ \ \ \ \ \  \ (T,S)\mapsto S
  \]
  has a splitting *-homomorphism
  \[
  \text{ad}_W:C^*(M_2,\mathcal{H}_2)^G\longrightarrow \mathcal{A},\ S\mapsto (WSW^*,S).
  \]
  In fact, for every $S$ in $C^*(M_2,\mathcal{H}_2)^G$, by the decomposition $\mathcal{H}_i=\chi_{Z_i}\mathcal{H}_i\oplus \chi_{Z_i^c}\mathcal{H}_i$, there is
  \[
  (V\otimes 1)WSW^*(V^*\otimes 1)-S
  \]
  \[
  \begin{array}{ll}
  =& \footnotesize{\left(\begin{array}{cc} 0 & 0\\ 0 & V\otimes 1 \end{array}\right)\left(\begin{array}{cc}
  Q&  0 \\
  0 &V^*\otimes 1
  \end{array}\right)S\left(\begin{array}{cc}
  Q^*&  0 \\
  0 &V\otimes 1
  \end{array}\right)\left(\begin{array}{cc} 0 & 0\\ 0 & V^*\otimes 1 \end{array}\right)-S} \\
  = & \chi_{Z_2^c}S\chi_{Z_2^c}-S\\
  = & -\chi_{Z_2}S\chi_{Z_2}-\chi_{Z_2}S\chi_{Z_2^c}-\chi_{Z_2^c}S\chi_{Z_2}\in C_{Z_2}^*(M_2,\mathcal{H}_2)^G,
  \end{array}
  \]
  which means $(WSW^*,S)\in\mathcal{A}$.

  The split exact sequence of $C^*$-algebras
  \[\xymatrix@C=0.5cm{
    0 \ar[r] & C^*_{Z_1}(M_1,\mathcal{H}_1)^G \ar[rr] && \mathcal{A} \ar[rr] && C^*(M_2,\mathcal{H}_2)^G \ar[r] & 0 }
  \]
  concludes that
  \[
  K_*(\mathcal{A})\cong K_*(C^*_{Z_1}(M_1,\mathcal{H}_1)^G)\oplus K_*(C^*(M_2,\mathcal{H}_2)^G).
  \]
  On the other hand, there are explicit isomorphisms
  \[
  K_*(C^*_{Z_1}(M_1,\mathcal{H}_1)^G)\cong K_*(C^*(Z_1,\chi_{Z_1}\mathcal{H}_1)^G)\cong K_*(C^*(Y_1,\chi_{Y_1}\mathcal{H}_1)^G).
  \]
  As a consequence,
  \[
  K_*(\mathcal{A})\cong  K_*(C^*(Y_1)^G)\oplus K_*(C^*(M_2)^G), i=1,2.
  \]

  Let $M_1\coprod M_2$ be the disjoint union of $M_1$ and $M_2$, $S_1\coprod S_2$ be the Dirac bundle on $M$. Denote by $D$ the Dirac operator on $S_1\coprod S_2$ and $D$ equals to $D_i$ on $S_i$.

  When $m$ is odd, lemma \ref{nomalizingfunction} tells us that
  $\chi(D)\oplus 0$ is in $\mathcal{B}$ for every normalizing function $\chi$. Moreover, $\frac{1+\chi(D)}{2}\oplus 0$ is also in $\mathcal{B}$ according to the fact that
  \[
  (V\otimes 1)(1\oplus 0)(V^*\otimes 1)-1\oplus 0=-\chi_{Z_2}\oplus 0\in D_{Z_2}^*(M_2,\mathcal{H}_2)^G.
  \]
  The equivalence class of $\frac{1+\chi(D)}{2}\oplus 0$ in $\mathcal{B}/\mathcal{A}$ is a projective element which is independent of the choice of $\chi$ by lemma \ref{c0function}. 

  When $m$ is even, the bundle $S$ has a $\mathbb{Z}_2$-grading  $S=S'\oplus S''$ and $$D=\left(\begin{array}{ll} 0 & D_+ \\ D_- & 0\end{array}\right)$$ with respect to this grading of $S$. 
  For every normalizing function $\chi$, there is a decomposition
  $$
  \chi(D)=\left(\begin{array}{cc} 0 & \chi(D)_+ \\
  \chi(D)_- & 0
  \end{array}\right)
  $$ with respect to $L^2(M,S')\oplus L^2(M,S'')$. Moreover, $\chi(D)_-\oplus 0\in \mathcal{B}$ and its equivalence class in $\mathcal{B}/\mathcal{A}$ is a unitary element independent of the choice of $\chi$.

  In generally, let
  \[
  [\chi(D)]=\left\{
              \begin{array}{ll}
                \left[\frac{1+\chi(D)}{2}\oplus 0\right]\in K_{m+1}(\mathcal{B}/\mathcal{A}), & \hbox{m is odd;} \\
                \left[\chi(D)_-\oplus 0 \right]\in K_{m+1}(\mathcal{B}/\mathcal{A}), & \hbox{m is even.}
              \end{array}
            \right.
  \]

  The equivariant coarse index of $D$ is defined by
  \[
  \text{Index}^G D=\partial [\chi(D)]\in K_m(\mathcal{A}).
  \]
  Its image under the map
  \[
  K_*(\mathcal{A})\longrightarrow K_*(C^*(M_1,\mathcal{H}_1)^G)\oplus K_*(C^*(M_2,\mathcal{H}_2)^G),
  \]
  which is induced by the natural inclusion
  \[
  \mathcal{A}\longrightarrow C^*(M_1,\mathcal{H}_1)^G\oplus C^*(M_1,\mathcal{H}_1)^G,
  \]
  is exactly the pair of the equivariant coarse index of $D_1$ and $D_2$
  \[
  \left(\text{Index}^G D_1,\text{Index}^G D_2\right)\in K_*(C^*(M_1,\mathcal{H}_1)^G)\oplus K_*(C^*(M_2,\mathcal{H}_2)^G).
  \]
  \begin{defn}\label{defofrelativeindex}
  For a set of relative equivariant coarse index data $(M_i,S_i,D_i,Y_i,h,q)$ over $G$, define the relative equivariant coarse index as the component of $\text{Index}^GD$ in $K_m(C^*(Y_1)^G)$ under the split exact sequence
  \[\xymatrix@C=0.5cm{
    0 \ar[r] & K_m(C^*_{Z_1}(M_1)^G) \ar[rr] && K_m(\mathcal{A}) \ar[rr] && K_m(C^*(M_2)^G) \ar[r] & 0 }
  \]
  and the natural isomorphism
  \[
  K_m(C^*_{Z_1}(M_1)^G)\cong K_m(C^*(Y_1)^G).
  \]
  We denote it by
  \[\text{Index}_r^G(D_1,D_2)\in K_m(C^*(Y_1)^G).\]
  Here $m=\dim M_1=\dim M_2$.
  \end{defn}

  Remark that we omit the symbol $G$ in the upper right corner and leave out the word 'equivariant' everywhere if $G$ is a trivial group.

\subsection{More Expositions on Relative Equivariant Coarse Index}
\label{More Expositions on Relative Equivariant Coarse Index}

\text{ }

  In this subsection we give further explanations about the relative equivariant coarse index in order to show alternative ways to get it and the independence of the intermediate choice in the steps to define it. These properties will increase the flexibility of computing and using our relative equivariant coarse index.

\begin{prop}
  Different choice of the $G$-equivariant isometric cover $$Q:\chi_{Z_2}\mathcal{H}_2\longrightarrow \chi_{Z_1}\mathcal{H}_1$$ for the map $Z_2\longrightarrow Z_1$ does not influence the splitting homomorphism
  \[
  (\text{ad}_W)_*:K_*(C^*(M_2,\mathcal{H}_2)^G)\longrightarrow K_*(\mathcal{A}),
  \]
  and then the decomposition of $K_*(\mathcal{A})$.
  \end{prop}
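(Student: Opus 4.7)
The plan is to show that any two $G$-equivariant isometric covers $Q,Q'$ of $Z_2\to Z_1$, giving rise to operators $W,W'$ and splitting $*$-homomorphisms $\mathrm{ad}_W,\mathrm{ad}_{W'}\colon C^*(M_2,\mathcal{H}_2)^G\to\mathcal{A}$, induce the same homomorphism on $K$-theory; since the direct-sum decomposition of $K_*(\mathcal{A})$ is determined by the $K$-theoretic image of the splitting, this yields the conclusion. The strategy is to realize $\mathrm{ad}_W$ and $\mathrm{ad}_{W'}$ as inner conjugate via a unitary in (a matrix amplification of) the multiplier algebra of $\mathcal{A}$ that is homotopic to the identity.

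First, I would verify that, for every $S\in C^*(M_2,\mathcal{H}_2)^G$, the difference $\mathrm{ad}_W(S)-\mathrm{ad}_{W'}(S)$ lies in the ideal $C^*_{Z_1}(M_1,\mathcal{H}_1)^G\oplus 0$ of $\mathcal{A}$. Relative to $\mathcal{H}_i=\chi_{Z_i}\mathcal{H}_i\oplus\chi_{Z_i^c}\mathcal{H}_i$ one has $W=\mathrm{diag}(Q,V^*\otimes 1)$ and $W'=\mathrm{diag}(Q',V^*\otimes 1)$, so $W-W'=\mathrm{diag}(Q-Q',0)$ factors through $\chi_{Z_1}\mathcal{H}_1$. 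Expanding
\[
WSW^*-W'SW'^*=(W-W')SW^*+W'S(W^*-W'^*)
\]
makes manifest that the difference is supported near $Z_1$, and it inherits local compactness and finite propagation from $S$ and from the boundedness and finite propagation of $Q-Q'$.

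Next, I would exhibit the implementing unitary. In the regime where $Q,Q'$ may be chosen as $G$-equivariant unitaries of the coarse equivalence on ample locally free modules (the standard choice), set $u=(W'W^*,1)$ in the multiplier algebra of $\mathcal{A}$. The identity $W^*W=I$ then gives $u\cdot\mathrm{ad}_W(S)\cdot u^*=\mathrm{ad}_{W'}(S)$ for every $S$, so inner conjugation by $u$ intertwines the two splittings. Because $W'W^*$ agrees with the identity on $\chi_{Z_1^c}\mathcal{H}_1$ and differs from $I$ only in the $\chi_{Z_1}\mathcal{H}_1$ block, a standard rotation/absorption argument on the ample module over $Z_1$ produces a norm-continuous path of unitaries in the multiplier of $C^*_{Z_1}(M_1,\mathcal{H}_1)^G$ joining $W'W^*$ to $I$, which extends to a path of multipliers of $\mathcal{A}$ connecting $u$ to $(1,1)$. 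Inner conjugation by a homotopy-trivial unitary induces the identity on $K$-theory, so $(\mathrm{ad}_W)_*=(\mathrm{ad}_{W'})_*$.

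The main obstacle is the case in which $Q,Q'$ are only isometric rather than unitary, so $W'W^*$ need not be unitary. In that case one replaces $u$ by the $2\times 2$ matrix unitary
\[
R=\begin{pmatrix}W'W^* & 1-W'W'^*\\ 1-WW^* & WW'^*\end{pmatrix}
\]
in $M_2$ of the multiplier algebra, pairs it with $I_{M_2}$ on the second coordinate, and verifies that $R$ is connected to $I_{M_2}$ through unitaries in $M_2$ of the multiplier of $C^*_{Z_1}(M_1,\mathcal{H}_1)^G$. The homotopy existence reduces once more to a rotation argument on the ample $G$-module over $Z_1$, combined with the finite-propagation and support-near-$Z_1$ estimates of the first step applied uniformly along the path.
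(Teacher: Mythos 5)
Your overall strategy — realize $\mathrm{ad}_W$ and $\mathrm{ad}_{W'}$ (the paper's $W_1,W_2$) as conjugate by a unitary multiplier of a $2\times 2$ matrix amplification of $\mathcal{A}$, then conclude on $K$-theory — is the same as the paper's. But two things in your write-up need repair, one being a genuine gap.

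First, the gap: the final step of your argument rests on the claim that $R$ is joined to $I_{M_2}$ by a norm-continuous path of unitaries in $M_2$ of the multiplier algebra of $C^*_{Z_1}(M_1,\mathcal{H}_1)^G$ (or of $\mathcal{A}$), obtained from ``a standard rotation/absorption argument on the ample $G$-module over $Z_1$.'' This is asserted but not justified, and it is not obviously true: the unitary group of a multiplier algebra need not be connected, and a rotation argument does not by itself produce a path from a given unitary to $1$. Compare with the paper's choice: the paper instead uses
\[
R=\left(\begin{bmatrix} 1-W_1W_1^* & W_1W_2^*\\ W_2W_1^* & 1-W_2W_2^*\end{bmatrix},\begin{bmatrix} 0 & 1\\ 1 & 0\end{bmatrix}\right),
\]
which is \emph{self-adjoint} with $R^2=1$. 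For such a symmetry, the path $e^{i\theta P}$ (with $R=1-2P$) connects $1$ to a scalar multiple of $R$ inside the multipliers, so $\mathrm{ad}_R$ is homotopic to the identity through inner automorphisms, and the $K$-theory conclusion is immediate. Your
\[
R=\begin{pmatrix}W'W^* & 1-W'W'^*\\ 1-WW^* & WW'^*\end{pmatrix}
\]
is indeed a unitary (using $W^*W=W'^*W'=1$ and $WW^*, W'W'^*$ projections), but it is \emph{not} self-adjoint and $R^2\neq 1$ in general, so the symmetry shortcut is unavailable. To make your version rigorous you should not appeal to a homotopy; instead invoke the standard fact that conjugation by \emph{any} unitary in the multiplier algebra induces the identity on $K_*$ (Whitehead-lemma argument applied in one more matrix amplification). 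That suffices and avoids the unjustified homotopy claim entirely.

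Second, a smaller omission: you never check that the pair $\big(R, I_2\big)$ is actually a multiplier of $M_2(\mathcal{A})$. This needs the observation that $R-I_2$ has entries of the form $\chi_{Z_1}(\cdot)\chi_{Z_1}$ (e.g.\ $W'W^*-1=Q'Q^*-\chi_{Z_1}$), so that $(V\otimes 1)(R-I_2)=(R-I_2)(V^*\otimes 1)=0$ and multiplication preserves the defining relation $\Phi[T_1]=[T_2]$. Also, your opening observation that $\mathrm{ad}_W(S)-\mathrm{ad}_{W'}(S)$ lies in $C^*_{Z_1}(M_1,\mathcal{H}_1)^G\oplus 0$ is correct but plays no role in the subsequent argument; and the dichotomy ``unitary case'' vs.\ ``isometric case'' is a red herring, since $Q,Q'$ are always only isometries and your $2\times 2$ matrix handles that situation directly. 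In short: your matrix and the paper's both work; the paper's self-adjoint involution makes the $K$-theory step elementary, while yours requires a citation you did not supply, and the homotopy statement as you wrote it is a genuine unproved step.
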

  \begin{proof}
  Assume that there are $G$-equivariant isometric covers $$Q_i:\chi_{Z_2}\mathcal{H}_2\longrightarrow \chi_{Z_1}\mathcal{H}_1, \ \ i=1,2,$$ for the map $Z_2\longrightarrow Z_1$. Let $W_i:\mathcal{H}_2\longrightarrow \mathcal{H}_1$ be the operators associated to $Q_i$.

  We show that $(W_iW_j^*, 1)$ are multipliers of $\mathcal{A}$ for $i,j\in\{1,2\}$. Obviously, $\mathcal{A}$ is a non-degenerate $C^*$-subalgebra of $B(\mathcal{H}_1\oplus\mathcal{H}_2)$ and $(W_iW_j^*, 1)\in B(\mathcal{H}_1)\oplus B(\mathcal{H}_2)\subseteq B(\mathcal{H}_1\oplus\mathcal{H}_2)$. Take element $(T,S)$ in $\mathcal{A}$, then
  \[
  (W_iW_j^*, 1)(T,S)=(W_iW_j^*T, S)\in C^*(M_1,\mathcal{H}_1)^G\oplus C^*(M_2,\mathcal{H}_2)^G,
  \]
  \[
  (T,S)(W_iW_j^*, 1)=(TW_iW_j^*, S)\in C^*(M_1,\mathcal{H}_1)^G\oplus C^*(M_2,\mathcal{H}_2)^G,
  \]
  since $W_1$ and $W_2$ are $G$-equivariant isometric cover for $M_2\longrightarrow M_1$. By the decomposition \[
  \mathcal{H}_k=\chi_{Z_k}\mathcal{H}_k\oplus \chi_{Z_k^c}\mathcal{H}_k,\  k=1,2,
  \]
  there are
  \[
  (V\otimes 1)W_iW_j^*
  =
  \left(\begin{array}{cc}
  0 & 0\\ 0 & V\otimes 1
  \end{array}\right)
  \left(\begin{array}{cc}
  Q_i & 0\\ 0 & V^*\otimes 1
  \end{array}\right)
  \left(\begin{array}{cc}
  Q_j^* & 0\\ 0 & V\otimes 1
  \end{array}\right)=V\otimes 1,
  \]
  and
  \[
  W_iW_j^*(V^*\otimes 1)
  =
  \left(\begin{array}{cc}
  Q_i & 0\\ 0 & V^*\otimes 1
  \end{array}\right)
  \left(\begin{array}{cc}
  Q_j^* & 0\\ 0 & V\otimes 1
  \end{array}\right)\left(\begin{array}{cc}
  0 & 0\\ 0 & V^*\otimes 1
  \end{array}\right)=V^*\otimes 1,
  \]
  which means
  \[
  (V\otimes 1)W_iW_j^*T(V^*\otimes 1)-S=(V\otimes 1)T(V^*\otimes 1)-S\in C^*_{Z_2}(M_2,\mathcal{H}_2)^G,
  \]
  \[
  (V\otimes 1)T W_iW_j^*(V^*\otimes 1)-S=(V\otimes 1)T(V^*\otimes 1)-S\in C^*_{Z_2}(M_2,\mathcal{H}_2)^G.
  \]
  It concludes that $(W_iW_j^*,1)$ are multipliers of $\mathcal{A}$.

  Define
  \[
  R=\left(\left[\begin{array}{cc}
  1-W_1W_1^* & W_1W_2^*\\
  W_2W_1^* & 1-W_2W_2^*
  \end{array}\right],\left[\begin{array}{cc} 0 & 1\\ 1 & 0\end{array}\right]\right)
  \]
  which is a multiplier of $M_{2\times 2}(\mathcal{A})$ where $M_{2\times 2}(\mathcal{A})$ is the $2\times 2$ matrix algebra of $\mathcal{A}$. The operator $R$ is self-adjoint and $R^2=1$. So the *-isomorphism
  \[
  \text{ad}_R:M_{2\times 2}(\mathcal{A})\longrightarrow M_{2\times 2}(\mathcal{A}), \ T\longmapsto RTR^*,
  \]
  induces the identity map on $K_*(M_{2\times 2}(\mathcal{A}))$. Define the following two *-homomorphisms
  \[
  C^*(M_2,\mathcal{H}_2)^G\longrightarrow M_{2\times 2}(\mathcal{A}),
  \]
  \[
  \alpha: S\longmapsto \left(\left[\begin{array}{cc}
  W_1SW_1^* & 0\\
  0 & 0
  \end{array}\right],\left[\begin{array}{cc} S & 0\\ 0 & 0\end{array}\right]\right),
  \]
  \[
  \beta:S\longmapsto \left(\left[\begin{array}{cc}
  0 & 0\\
  0 & W_2SW_2^*
  \end{array}\right],\left[\begin{array}{cc} 0 & 0\\ 0 & S\end{array}\right]\right),
  \]
  which satisfy $\alpha=\text{ad}_R\circ\beta$. Then
  \[
  \alpha_*=\beta_*: K_*(C^*(M_2,\mathcal{H}_2)^G)\longrightarrow K_*(M_{2\times 2}(\mathcal{A})).
  \]
  As a result,
  \[
  (\text{ad}_{W_1})_*=(\text{ad}_{W_2})_*:K_*(C^*(M_2,\mathcal{H}_2)^G)\longrightarrow K_*(\mathcal{A}).
  \]
  \end{proof}

  \begin{prop}\label{cmap*}
  The following maps
  \[
  \text{ad}_V :C^*(M_1,H_1)^G/C^*_{Z_1}( M_1,H_1)^G\longrightarrow C^*(M_2,H_2)^G/C^*_{Z_2}( M_2,H_2)^G,
  \]
  \[
  [T]\longmapsto [VTV^*],
  \]
  \[
  \text{Ad}_V :D^*(M_1,H_1)^G/D^*_{Z_1}( M_1,H_1)^G\longrightarrow D^*(M_2,H_2)^G/D^*_{Z_2}( M_2,H_2)^G,
  \]
  \[
  [T]\longmapsto [VTV^*],
  \]
  are well-defined *-isomorphisms satisfying
  $$\text{ad}_V[f(D_1)]=[f(D_2)],\ \ \ \ \ \  \text{Ad}_V[\chi(D_1)]=[\chi(D_2)],$$ for every $f\in C_0(\mathbb{R})$ and normalizing function $\chi$.
  \end{prop}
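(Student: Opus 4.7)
The plan is to show that Proposition \ref{cmap*} follows from essentially verbatim repetitions of the arguments already given in Lemmas \ref{cmap}, \ref{dmap}, \ref{c0function}, and \ref{nomalizingfunction}, but carried out one tensor factor ``lower'' — i.e.\ directly on $H_i$ rather than on $\mathcal{H}_i=H_i\otimes\ell^2(G)$. The key algebraic input that makes all of those arguments go through is the intertwining identity $\rho_2(f)V=V\rho_1(f\circ h)$ for $f\in B(M_2)$, which we already have on the level of $H_i$ — the previous proofs only used this identity with a harmless $\otimes 1$ attached.

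First, I will check well-definedness on each ideal/quotient. For the $C^*$-side, I will verify that $T\mapsto VTV^*$ sends $C^*(M_1,H_1)^G$ into $C^*(M_2,H_2)^G$ and $C^*_{Z_1}(M_1,H_1)^G$ into $C^*_{Z_2}(M_2,H_2)^G$ by rerunning the four verifications in the proof of Lemma \ref{cmap}: equivariance is immediate since $V$ is $G$-equivariant; local compactness follows from $\rho_2(f)V=V\rho_1(f\circ h)$ and the fact that $(f\circ h)\cdot \chi_{\overline{Z_1^c}}$ is still compactly supported; finite propagation follows from the same support-tracking argument, using that $h$ is an isometry $M_1-Z_1\to M_2-Z_2$; and support near $Z_1$ maps to support near $Z_2$ by the same cutoff argument. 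Multiplicativity modulo the ideal follows from
\[
VTSV^{*}-VTV^{*}VSV^{*}=VT\chi_{Z_{1}}SV^{*},
\]
which lies in $C^{*}_{Z_{2}}(M_{2},H_{2})^{G}$. For the $D^*$-side, the parallel verifications from Lemma \ref{dmap} (pseudolocality and local compactness on $M_i-Z_i$) go through unchanged. Bijectivity in both cases follows from the symmetry $h^{-1},V^*$, with inverse $\text{ad}_{V^{*}}$.

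Next, for the identity $\text{ad}_V[f(D_1)]=[f(D_2)]$ with $f\in C_0(\mathbb{R})$, I will copy the strategy of Lemma \ref{c0function}. It suffices to treat $f$ with $\widehat{f}$ supported in $(-r,r)$. Choose smooth cutoffs $\psi_i:M_i\to[0,1]$ equal to $1$ on $O(Z_i,r)$, supported in $O(Z_i,2r)$, and matched by $\psi_1=\psi_2\circ h$ on $M_1-Z_1$. Since $\mathrm{prop}(f(D_i))<r$, the pieces $f(D_i)\rho_i(\psi_i)$ lie in $C^{*}_{Z_{i}}(M_i,H_i)^G$, so $[f(D_i)]=[f(D_i)\rho_i(1-\psi_i)]$. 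The finite propagation speed of the wave equation together with the bundle intertwining from condition (ii) of the index data yields
\[
V e^{itD_1}\rho_1(1-\psi_1)V^{*}=e^{itD_2}\rho_2(1-\psi_2),\qquad t\in(-r,r),
\]
and integration against $\widehat{f}$ then gives $Vf(D_1)\rho_1(1-\psi_1)V^{*}=f(D_2)\rho_2(1-\psi_2)$, from which the claim follows.

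Finally, for $\text{Ad}_V[\chi(D_1)]=[\chi(D_2)]$ with $\chi$ a normalizing function, I will reproduce the argument of Lemma \ref{nomalizingfunction}. Assume $\widehat{\chi}$ is supported in $(-r,r)$; then $T:=V\chi(D_1)V^{*}-\chi(D_2)$ is pseudolocal with finite propagation by the previous step. To place $T$ in $D^{*}_{Z_{2}}(M_2,H_2)^G$, I still need support near $Z_2$ and local compactness on $M_2-Z_2$. Support near $Z_2$ follows from the same wave-equation identity: for $f\in C_c(M_2)$ with $d(\mathrm{supp}(f),Z_2)>r$ one gets $\rho_2(f)T=T\rho_2(f)=0$. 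For local compactness on $M_2-Z_2$, given $f\in C_c(M_2-Z_2)$ pick $r'$ with $d(\mathrm{supp}(f),Z_2)>r'$, choose another normalizing $\eta$ with $\widehat{\eta}$ supported in $(-r',r')$, set $\tau=\chi-\eta\in C_0(\mathbb{R})$, and write $T=(V\eta(D_1)V^{*}-\eta(D_2))+(V\tau(D_1)V^{*}-\tau(D_2))$; the first bracket annihilates $\rho_2(f)$ on either side, while the second is locally compact because $\tau(D_i)$ is. I expect no serious obstacle: the only subtlety is carefully invoking finite propagation speed for wave operators on $H_i$ exactly as in the tensored setting, since this is what lets the intertwining property of $h,\bar h$ pass from the classical level to the functional-calculus level.
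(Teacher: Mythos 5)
Your proposal is correct and matches the paper's approach exactly: the paper's own proof is the one-line observation that the arguments of Lemmas \ref{cmap}, \ref{dmap}, \ref{c0function}, and \ref{nomalizingfunction} persist verbatim after replacing $\mathcal{H}_i$ by $H_i$ and $V\otimes 1$ by $V$, which is precisely the plan you carry out in detail.
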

  \begin{proof}
  Most of the statements in the proof of lemma \ref{cmap}, lemma \ref{dmap}, lemma \ref{c0function} and lemma \ref{nomalizingfunction} are still hold when we change $\mathcal{H}_i$ to $H_i$ and $V\otimes 1$ to $V$.
  \end{proof}

  It can be guaranteed by lemma \ref{V} that these diagrams are commutative:
  \[
  \xymatrix{
    C^*(M_1,H_1)^G/C^*_{Z_1}( M_1,H_1)^G \ar[d]_{\oplus 0} \ar[r]^{\text{ad}_V}_{\cong} & C^*(M_2,H_2)^G/C^*_{Z_2}( M_2,H_2)^G \ar[d]^{\oplus 0} \\
    C^*(M_1,\mathcal{H}_1)^G/C^*_{Z_1}( M_1,\mathcal{H}_1)^G \ar[r]^{\Phi}_{\cong} & C^*(M_2,\mathcal{H}_2)^G/C^*_{Z_2}( M_2,\mathcal{H}_2)^G,   }
  \]
  \[
  \xymatrix{
    D^*(M_1,H_1)^G/D^*_{Z_1}( M_1,H_1)^G \ar[d]_{\oplus 0} \ar[r]^{\text{Ad}_V}_{\cong} & D^*(M_2,H_2)^G/D^*_{Z_2}( M_2,H_2)^G \ar[d]^{\oplus 0} \\
    D^*(M_1,\mathcal{H}_1)^G/D^*_{Z_1}( M_1,\mathcal{H}_1)^G \ar[r]^{\Psi}_{\cong} & D^*(M_2,\mathcal{H}_2)^G/D^*_{Z_2}( M_2,\mathcal{H}_2)^G.   }
  \]

  Similarly, there are $C^*$-algebras
  $$A=\left\{(T_1,T_2)\in C^*(M_1,H_1)^G\oplus C^*(M_2,H_2)^G:\text{ad}_V[T_1]=[T_2]\right\},$$
  $$
  B=\left\{(S_1,S_2)\in D^*(M_1,H_1)^G\oplus D^*(M_2,H_2)^G:\text{Ad}_V[S_1]=[S_2]\right\},
  $$
  and $A$ is a closed ideal in $B$. The following two exact sequence are commutative
  \[
  \xymatrix{
    0  \ar[r] & A \ar[d]_{\oplus 0} \ar[r] & B \ar[r] \ar[d]_{\oplus 0}  & B/A \ar[d]_{\oplus 0}\ar[r]  & 0  \\
    0 \ar[r] & \mathcal{A} \ar[r] & \mathcal{B} \ar[r] & \mathcal{B}/\mathcal{A} \ar[r] & 0.   }
  \]
  Then there is a commutative diagram in $K$-theory
  \[
  \xymatrix{
    K_*(B/A) \ar[d]_{\oplus 0} \ar[r]^{\partial'} & K_*(A) \ar[d]^{\oplus 0} \\
    K_*(\mathcal{B}/\mathcal{A}) \ar[r]^{\partial} & K_*(\mathcal{A}).   }
  \]
  Define
  \[
  [\chi(D)]'=\left\{
              \begin{array}{ll}
                \left[\frac{1+\chi(D)}{2}\right]\in K_{m+1}(B/A), & \hbox{m is odd;} \\
                \left[\chi(D)_- \right]\in K_{m+1}(B/A), & \hbox{m is even;}
              \end{array}
            \right.
  \]
  which is independent of the choice of the normalizing function $\chi$. As a result,
  \[
  \text{Index}^GD=\partial([\chi(D)]'\oplus 0)=(\partial'[\chi(D)]')\oplus 0.
  \]

  Moreover, the following two exact sequence are also commutative
  \[
  \xymatrix{
    0  \ar[r] & C^*_{Z_1}(M_1,H_1)^G \ar[d]_{\oplus 0} \ar[r] & A \ar[r] \ar[d]_{\oplus 0}  & C^*(M_2,H_2)^G \ar[d]_{\oplus 0}\ar[r]  & 0  \\
    0 \ar[r] & C^*_{Z_1}(M_1,\mathcal{H}_1)^G \ar[r] & \mathcal{A} \ar[r] & C^*(M_2,\mathcal{H}_2)^G \ar[r] & 0.   }
  \]
  In general, we do not know whether the first row of exact sequence is split. That is why we have to introduce $\mathcal{H}_i$ as geometric modules. However, this is unnecessary if $H_i$ themselves are locally free. In this case, the map $Z_2\longrightarrow Z_1$ defined by $h^{-1}$ and $q$ has a $G$-equivariant isometric cover $L:\chi_{Z_2}H_2\longrightarrow \chi_{Z_1}H_1$. Let
  \[
  N=\left(\begin{array}{cc}L & 0\\ 0 & V^*\end{array}\right): H_2\longrightarrow H_1.
  \]
  which is a $G$-equivariant isometric cover of $M_2\longrightarrow M_1$. Then the *-homomorphism
  \[
  \text{ad}_{N}:C^*(M_2,H_2)^G\longrightarrow A, S\longmapsto (NSN^*,S)
  \]
  is a splitting map for the first row of exact sequence. We can choose $Q=L\otimes 1$ to define $W$, then the splitting maps $\text{ad}_N$ in the first row and $\text{ad}_W$ in the second row are also commutative. So we can define the relative equivariant coarse index as the component of $\partial'[\chi(D)]'$ in $K_m(C^*(Y_i,\chi_{Y_i}H_i)^G)$ under the isomorphisms
  \[
  K_*(C^*_{Z_1}(M_1,H_1)^G)\cong K_*(C^*(Y_1,\chi_{Y_1}H_1)^G).
  \]
  These two definitions coincide under the natural isomorphisms
  \[
  K_*(C^*(Y_1,\chi_{Y_1}H_1)^G)\overset{\cong}{\longrightarrow} K_*(C^*(Y_1,\chi_{Y_1}\mathcal{H}_1)^G).
  \]

  Now, we show the independence of the relative equivariant coarse index about the cutoff function pair $(\omega_1,\omega_2)$ and the distance $s$.

  \begin{prop}\label{independence of cut off function pair}
  If we choose another pair of cutoff function $(\bar\omega_1,\bar\omega_2)$ satisfying $\bar\omega_1=\bar\omega_2\circ h$ on $M_1-Z_1$, then the relative equivariant coarse index $\text{Index}_r^G(D_1,D_2)$ in Definition \ref{defofrelativeindex} will not change.
  \end{prop}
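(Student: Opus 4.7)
The plan is to observe that the \emph{ambient} objects in the definition of $\mathrm{Index}_r^G(D_1,D_2)$---namely the algebras $C^*(M_i,\mathcal{H}_i)^G$, $D^*(M_i,\mathcal{H}_i)^G$, the *-isomorphisms $\Phi,\Psi$, and hence the doubled algebras $\mathcal{A},\mathcal{B}$ together with the boundary map $\partial\colon K_{m+1}(\mathcal{B}/\mathcal{A})\to K_m(\mathcal{A})$---do not involve the cutoff functions at all. The pair $(\omega_1,\omega_2)$ enters only through the isometric embeddings $\iota_i\colon H_i\to\mathcal{H}_i$, which determine the decomposition $\mathcal{H}_i=\iota_i(H_i)\oplus\iota_i(H_i)^{\bot}$ and therefore the inclusion $\oplus\,0$. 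Everything thus reduces to proving that the class $[\chi(D)\oplus 0]\in K_{m+1}(\mathcal{B}/\mathcal{A})$ does not change when we switch $(\omega_1,\omega_2)$ to $(\bar\omega_1,\bar\omega_2)$.

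My approach is a straight-line homotopy. I would first form
\[
\omega_i^{(t)}:=\sqrt{\,t\,\omega_i^{2}+(1-t)\,\bar\omega_i^{2}\,},\qquad t\in[0,1],
\]
and check directly that each $\omega_i^{(t)}$ is again a cutoff function (the normalization $\sum_{g\in G}\omega_i^{(t)}(gx)^{2}=1$ follows termwise), and that the gluing condition $\omega_1^{(t)}=\omega_2^{(t)}\circ h$ on $M_1-Z_1$ persists because it holds at both endpoints. Denoting by $\iota_i^{(t)}$ the corresponding $G$-equivariant isometric $0$-cover of $\mathrm{id}_{M_i}$, this produces a norm-continuous family of embeddings joining $\iota_i$ with $\bar\iota_i$.

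The key step is then to promote these embeddings to an inner equivalence inside $\mathcal{B}/\mathcal{A}$. For each $t$ define
\[
R_t:=\bigl(\iota_1^{(t)}\iota_1^{*},\;\iota_2^{(t)}\iota_2^{*}\bigr)\in B(\mathcal{H}_1)\oplus B(\mathcal{H}_2),
\]
which is $G$-equivariant, has zero propagation, and intertwines the $C_0(M_i)$-actions on $\iota_i(H_i)$ and $\iota_i^{(t)}(H_i)$. I would verify that $R_t$ is a multiplier of $\mathcal{A}$ and of $\mathcal{B}$, and then pass to the unitary
\[
\widetilde R_t:=\begin{pmatrix} R_t & 1-R_tR_t^{*}\\ 1-R_t^{*}R_t & R_t^{*}\end{pmatrix}
\]
on the $2\times 2$ stabilization $M_{2\times 2}(\mathcal{B}/\mathcal{A})$. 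Conjugation by $\widetilde R_t$ carries $[\chi(D)\oplus 0]\oplus 0$ to $[\chi(D)\bar\oplus\,0]\oplus 0$, and being a continuous path of inner automorphisms it induces the identity on $K$-theory. This gives the required equality of classes in $K_{m+1}(\mathcal{B}/\mathcal{A})$, and invariance of $\mathrm{Index}_r^G(D_1,D_2)$ follows after applying $\partial$ and the splitting decomposition.

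The main obstacle will be verifying that $R_t$ genuinely multiplies $\mathcal{A}$ and $\mathcal{B}$. All the properties needed for $R_t\cdot S$ and $S\cdot R_t$ to land in $C^{*}(M_i,\mathcal{H}_i)^G$, resp.\ $D^{*}(M_i,\mathcal{H}_i)^G$, follow from the propagation, equivariance and local compactness arguments already used in Lemmas \ref{cmap}--\ref{dmap}. The delicate point is compatibility with $\Phi$ and $\Psi$, i.e.\ that $R_t$ preserves the defining relations $\Phi[T_1]=[T_2]$ and $\Psi[S_1]=[S_2]$ modulo the ideals. This is exactly where the shared compatibility $\omega_1=\omega_2\circ h$ and $\bar\omega_1=\bar\omega_2\circ h$ on $M_1-Z_1$ is used: it implies, via the block description $V\otimes 1=\bigl(\begin{smallmatrix} V & 0\\ 0 & *\end{smallmatrix}\bigr)$ of Lemma \ref{V} applied simultaneously to both decompositions, that the discrepancy between $(V\otimes 1)R_{1,t}$ and $R_{2,t}(V\otimes 1)$ is supported near $Z_2$ and hence lies in $C^{*}_{Z_2}(M_2,\mathcal{H}_2)^G$ (resp.\ $D^{*}_{Z_2}$). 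Once this is in place the rest of the argument is formal.
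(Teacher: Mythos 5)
Your proposal is correct in substance, and the reduction in your opening paragraph --- that everything hinges on the independence of the inclusion $\oplus\,0$ (equivalently, of the class $[\chi(D)\oplus 0]$) from the chosen cutoff pair --- is exactly how the paper isolates the issue. Your verification that the compatibility $\omega_1=\omega_2\circ h$ (and the analogous relation for $\bar\omega_i$) forces the discrepancy between $(V\otimes 1)\iota_1^{(t)}\iota_1^*$ and $\iota_2^{(t)}\iota_2^*(V\otimes 1)$ into $C^*_{Z_2}(M_2,\mathcal{H}_2)^G$, via the block form of Lemma \ref{V}, is also precisely the delicate point and your Lemma-\ref{V}-based argument for it works.

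The genuine difference is that you employ a continuous path $\omega_i^{(t)}=\sqrt{t\omega_i^2+(1-t)\bar\omega_i^2}$ and a corresponding path of $2\times 2$ unitaries $\widetilde R_t$, appealing to homotopy invariance. The paper instead writes down a \emph{single} self-adjoint unitary multiplier, namely
\[
J=\left(\left[\begin{array}{cc}1-\iota_1\iota_1^* & \iota_1\bar\iota_1^*\\ \bar\iota_1\iota_1^* & 1-\bar\iota_1\bar\iota_1^*\end{array}\right],\left[\begin{array}{cc}1-\iota_2\iota_2^* & \iota_2\bar\iota_2^*\\ \bar\iota_2\iota_2^* & 1-\bar\iota_2\bar\iota_2^*\end{array}\right]\right),
\]
which conjugates the top-left $\iota$-corner to the bottom-right $\bar\iota$-corner, and then invokes the standard fact that conjugation by a unitary in the multiplier algebra already induces the identity on $K$-theory --- no homotopy is required. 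Your homotopy is therefore an unnecessary detour: the single unitary $\widetilde R_0$ (or the paper's $J$) suffices. In fact the homotopy introduces two small technicalities you did not quite resolve: the interpolated $\omega_i^{(t)}$ is generally only continuous (the square root of a smooth nonnegative function need not be smooth), which is harmless for defining the isometric $0$-cover $\iota_i^{(t)}$ but does mean $\omega_i^{(t)}$ is not literally a cutoff function in the paper's sense; and the statement ``conjugation by $\widetilde R_t$ carries $[\chi(D)\oplus 0]\oplus 0$ to $[\chi(D)\bar\oplus 0]\oplus 0$'' holds only at $t=0$, not for generic $t$, so the conclusion really rests on the multiplier-unitary fact rather than on the path. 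The paper's version is the tighter argument, but your proposal, once these points are made explicit, reaches the same conclusion by an essentially equivalent $2\times 2$ unitary trick.
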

  \begin{proof}
  From above observation, it is enough to verify that the group homomorphism
  \[
  \oplus\ 0:K_*(A)\longrightarrow K_*(\mathcal{A})
  \]
  will not change.

  Denote by $\bar\iota_{i}$ the embedding from $H_i$ to $\mathcal{H}_i$ induced by $\bar\omega_i$ for $i=1,2$. Here we have to distinguish $H_i$, $\iota_i(H_i)$ and $\bar\iota_i(H_i)$ explicitly.

  The isometric *-homomorphisms $\oplus \ 0:B(H_i)\longrightarrow B(\mathcal{H}_i)$ actually are
  \[
  \text{ad}_{\iota_i}:B(H_i)\longrightarrow B(\mathcal{H}_i), T\longmapsto \iota_i T\iota_i^*;
  \]
  \[
  \text{ad}_{\bar\iota_i}:B(H_i)\longrightarrow B(\mathcal{H}_i), T\longmapsto \bar\iota_i T\bar\iota_i^*.
  \]
  Then the *-homomorphisms $\oplus \ 0:A\longrightarrow \mathcal{A}$ derived from $(\omega_1,\omega_2)$ and $(\bar\omega_1,\bar\omega_2)$ are
  \[
  \text{ad}_{(\iota_1,\iota_2)}:A\longrightarrow \mathcal{A},(T,S)\longmapsto (\iota_1T\iota_1^*,\iota_2S\iota_2^*);
  \]
  \[
  \text{ad}_{(\bar\iota_1,\bar\iota_2)}:A\longrightarrow \mathcal{A},(T,S)\longmapsto (\bar\iota_1T\bar\iota_1^*,\bar\iota_2S\bar\iota_2^*).
  \]
  Define
  \[
  J=\left(\left[\begin{array}{cc}1-\iota_1\iota_1^* & \iota_1\bar\iota_1^*\\ \bar\iota_1\iota_1^* & 1-\bar\iota_1\bar\iota_1^*\end{array}\right],\left[\begin{array}{cc}1-\iota_2\iota_2^* & \iota_2\bar\iota_2^*\\ \bar\iota_2\iota_2^* & 1-\bar\iota_2\bar\iota_2^*\end{array}\right]\right),
  \]
  which is in $M_{2\times 2}(B(\mathcal{H}_1))\oplus M_{2\times 2}(B(\mathcal{H}_2))\subseteq M_{2\times 2}(B(\mathcal{H}_1)\oplus B(\mathcal{H}_2))$ and is a multiplier of $M_{2\times 2}(\mathcal{A})$. Moreover, $J$ is unitary and self-adjoint. So the *-homomorphism
  \[
  \text{ad}_{J}:M_{2\times 2}(\mathcal{A})\longrightarrow M_{2\times 2}(\mathcal{A})
  \]
  induces the identity map on $K_*(M_{2\times 2}(\mathcal{A}))$.

  Let
  \[
  A\longrightarrow M_{2\times 2}(\mathcal{A}),
  \]
  \[
  \alpha:(T,S)\longmapsto \left(\left[\begin{array}{cc}\iota_1T\iota_1^*& 0 \\ 0&0\end{array}\right],\left[\begin{array}{cc}\iota_2S\iota_2^* & 0\\ 0 &0\end{array}\right]\right);
  \]
  \[
  \beta:(T,S)\longmapsto \left(\left[\begin{array}{cc}0 & 0\\ 0 &\bar\iota_1T\bar\iota_1^*\end{array}\right],\left[\begin{array}{cc}0 & 0 \\ 0 & \bar\iota_2S\bar\iota_2^*\end{array}\right]\right).
  \]
  Then $\alpha=\text{ad}_{R}\circ\beta $ which means
  \[
  \alpha_*=\beta_*:K_*(A)\longrightarrow K_*(M_{2\times 2}(\mathcal{A})).
  \]
  As a result,
  \[
  \left(\text{ad}_{(\iota_1,\iota_2)}\right)_*=(\text{ad}_{(\bar\iota_1,\bar\iota_2)})_*:K_*(A)\longrightarrow K_*(\mathcal{A}).
  \]
  \end{proof}

  \begin{prop}
  If we choose another $s'>0$ to define $Z_i'=O(Y_i,s')$ for $i=1,2$, then the relative equivariant coarse index $\text{Index}_r^G(D_1,D_2)$ in Definition \ref{defofrelativeindex} will not change.
  \end{prop}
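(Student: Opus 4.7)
The plan is to prove invariance by a naturality argument: exhibit all of the $s$-data as a sub-datum of the $s'$-data and then transport the index class through a commutative ladder of split exact sequences. Without loss of generality I may assume $s<s'$, for otherwise I pick any $s''>\max(s,s')$ and compare both $(s,s'')$ and $(s',s'')$ to reduce to the nested case.

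First I would check that all auxiliary choices from the $s$-setting survive in the $s'$-setting. Any pair of cutoff functions $(\omega_1,\omega_2)$ satisfying $\omega_1=\omega_2\circ h$ on $M_1-Z_1$ automatically satisfies the same identity on the smaller set $M_1-Z_1'$, so the same $\omega_i$, embeddings $\iota_i$ and operator $V$ serve both settings. The inclusions $Z_i\subseteq Z_i'$ yield
\[
C^*_{Z_i}(M_i,\mathcal{H}_i)^G\subseteq C^*_{Z_i'}(M_i,\mathcal{H}_i)^G,\qquad D^*_{Z_i}(M_i,\mathcal{H}_i)^G\subseteq D^*_{Z_i'}(M_i,\mathcal{H}_i)^G,
\]
since being supported near $Z_i$ is stronger than being supported near $Z_i'$, and local compactness on $M_i-Z_i$ is stronger than local compactness on the smaller set $M_i-Z_i'$. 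Rerunning the proofs of Lemmas \ref{cmap} and \ref{dmap} with $Z_i'$ in place of $Z_i$ and the same $V$ produces the analogous *-isomorphisms $\Phi'$, $\Psi'$, and hence the analogous $C^*$-algebras $\mathcal{A}'\supseteq \mathcal{A}$ and $\mathcal{B}'\supseteq \mathcal{B}$.

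Next I would fit everything into the commutative ladder of split exact sequences
\[
\xymatrix@C=0.4cm{
0\ar[r] & C^*_{Z_1}(M_1,\mathcal{H}_1)^G\ar[d]\ar[r] & \mathcal{A}\ar[d]\ar[r] & C^*(M_2,\mathcal{H}_2)^G\ar@{=}[d]\ar[r] & 0 \\
0\ar[r] & C^*_{Z_1'}(M_1,\mathcal{H}_1)^G\ar[r] & \mathcal{A}'\ar[r] & C^*(M_2,\mathcal{H}_2)^G\ar[r] & 0,
}
\]
together with its analogue relating $(\mathcal{B},\mathcal{A})$ to $(\mathcal{B}',\mathcal{A}')$. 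The class $[\chi(D)]$ is represented by the same operator $\chi(D)\oplus 0$ in both quotient algebras, so it maps to the analogous class under the vertical arrow of quotients, and naturality of the boundary map carries $\text{Index}^GD\in K_m(\mathcal{A})$ to $\text{Index}^GD\in K_m(\mathcal{A}')$. The $K_m(C^*(M_2,\mathcal{H}_2)^G)$-components on both rows coincide because the right-hand column is the identity, while the $K_m(C^*_{Z_1}(M_1,\mathcal{H}_1)^G)$- and $K_m(C^*_{Z_1'}(M_1,\mathcal{H}_1)^G)$-components live in canonically isomorphic copies of $K_m(C^*(Y_1)^G)$ via the nested factorization $\chi_{Y_1}\mathcal{H}_1\hookrightarrow \chi_{Z_1}\mathcal{H}_1\hookrightarrow \chi_{Z_1'}\mathcal{H}_1$.

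The main obstacle I anticipate is showing that the direct sum decompositions of $K_m(\mathcal{A})$ and $K_m(\mathcal{A}')$ are genuinely compatible under the vertical inclusion, since a priori the decompositions depend on the splittings $\text{ad}_W$ and $\text{ad}_{W'}$, hence on the covers $Q$, $Q'$. This is exactly where the preceding Proposition enters: $(\text{ad}_{W'})_*$ on $K$-theory is independent of $Q'$, so I may choose $Q'$ of the form $Q\oplus Q''$, where $Q''$ is a $G$-equivariant isometric cover of $Z_2'-Z_2\to Z_1'-Z_1$; with this choice $W'$ restricts to $W$ on $\chi_{Z_2}\mathcal{H}_2$ and the two splittings intertwine compatibly with the ladder. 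Modulo this independence result the remainder is a formal application of the five lemma and naturality of $\partial$, yielding the equality of the $K_m(C^*(Y_1)^G)$-components on both rows, that is, $\text{Index}_r^G(D_1,D_2)$ is independent of $s$.
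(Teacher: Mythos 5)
Your proposal follows essentially the same route as the paper: reduce to nested sets $Z_i\subseteq Z_i'$, exhibit the inclusions of the ideal and double algebras, fit them into a commutative ladder of split exact sequences, choose compatible splittings, and chase the boundary map.

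One small point worth being more careful about: you assert that "the same $\omega_i$, embeddings $\iota_i$ and operator $V$ serve both settings." The cutoff functions and $\iota_i$ do carry over (the paper justifies this by invoking Proposition~\ref{independence of cut off function pair}), but the operator the $s'$-setting calls for is the zero extension $V'\varphi=\bar h\circ(\varphi\cdot\chi_{Z_1'^c})\circ h^{-1}$, which is $V\chi_{Z_1'^c}$, not $V$ itself. Using $V$ in place of $V'$ still produces the correct $\Phi'$, $\Psi'$, $\mathcal{A}'$, $\mathcal{B}'$ because $VTV^*-V'TV'^*=V(\chi_{Z_1'}T+T\chi_{Z_1'}-\chi_{Z_1'}T\chi_{Z_1'})V^*$ is supported near $Z_2'$, so the two conjugations coincide in the quotient $C^*(M_2,\mathcal{H}_2)^G/C^*_{Z_2'}(M_2,\mathcal{H}_2)^G$; but as written you silently identify the two operators, and the reader has to supply this computation. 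The paper sidesteps this by explicitly recording $V'=V\chi_{Z_1'^c}=\chi_{Z_2'^c}V$.

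Your resolution of the splitting compatibility — choosing $Q'=Q\oplus Q''$ with $Q''$ a cover of $Z_2'-Z_2\to Z_1'-Z_1$ — is the same as the paper's choice $W'=W$ (just take $Q''$ to be the restriction of $V^*\otimes 1$), and invoking the preceding independence-of-$Q$ proposition is a fine alternative to specifying $W'=W$ directly. The five lemma is not really needed; a direct diagram chase through the two nested split exact sequences and the natural isomorphisms $K_*(C^*_{Z_1}(M_1,\mathcal{H}_1)^G)\cong K_*(C^*(Y_1,\chi_{Y_1}\mathcal{H}_1)^G)\cong K_*(C^*_{Z_1'}(M_1,\mathcal{H}_1)^G)$ suffices, which is what the paper does.
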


  \begin{proof}
  Without loss of generality, we can suppose $s'>s$. Add an apostrophe to mean the objects associated to $s'$, such as $(\omega_1',\omega_2')$, $(\iota_1',\iota_2')$, $V'$, $\Phi'$, $\Psi'$, $\mathcal{A}'$, $\mathcal{B}'$ and $\text{Index}^G(D)'$. According to Proposition \ref{independence of cut off function pair}, we can take $(\omega_1',\omega_2')=(\omega_1,\omega_2)$ and then $(\iota_1',\iota_2')=(\iota_1,\iota_2)$. There are
  \[
  C^*_{Z_i}(M_i)^G\subseteq C^*_{Z_i'}(M_i)^G,\ \ \ \   D^*_{Z_i}(M_i)^G\subseteq D^*_{Z_i'}(M_i)^G
  \]
  by geometric modules $H_i$ or $\mathcal{H}_i$ for $i=1,2$ and
  $$V'=\chi_{Z_2'^c}V\chi_{Z_1'^c}=V\chi_{Z_1'^c}=\chi_{Z_2'^c}V,$$
  $$V'\otimes 1=\chi_{Z_2'^c}(V\otimes 1)\chi_{Z_1'^c}=(V\otimes 1)\chi_{Z_1'^c}=\chi_{Z_2'^c}(V\otimes 1).$$
  Consequently, $\mathcal{A}\subseteq\mathcal{A}'$, $\mathcal{B}\subseteq \mathcal{B}'$ and the following commutative diagrams hold
  \[
  \xymatrix{
    0  \ar[r] & \mathcal{A} \ar[d] \ar[r] &\mathcal{B} \ar[d] \ar[r] & \mathcal{B}/\mathcal{A} \ar[d] \ar[r] & 0  \\
    0 \ar[r] & \mathcal{A}' \ar[r] & \mathcal{B}'\ar[r] & \mathcal{B}'/\mathcal{A}' \ar[r] & 0 ,  }
  \]
  \[
  \xymatrix{
     0  \ar[r] & C^*_{Z_1}(M_1,\mathcal{H}_1)^G \ar[d] \ar[r] & \mathcal{A} \ar[d] \ar[r] & C^*(M_2,\mathcal{H}_2)^G \ar[d] \ar[r] & 0  \\
    0 \ar[r] & C^*_{Z_1'}(M_1,\mathcal{H}_1)^G \ar[r] & \mathcal{A'} \ar[r] & C^*(M_2,\mathcal{H}_2)^G \ar[r] & 0 ,  }
  \]
  where the first diagram means
  \[
  \text{Index}^G(D)\longmapsto \text{Index}^G(D)' \text{\ \ under\ \ }
  K_*(\mathcal{A})\longrightarrow K_*(\mathcal{A}'),
  \]
  and the splitting maps $\text{ad}_W$ and $\text{ad}_{W'}$ in the second diagram are also commutative if we choose $W'=W$.
  Since
  \[
  \xymatrix{
    C^*_{Z_1}(M_1,\mathcal{H}_1)^G \ar[d]  & C^*(Z_1,\chi_{Z_1}\mathcal{H}_1)^G \ar[d]\ar[l]&C^*(Y_1,\chi_{Y_1}\mathcal{H}_1)^G\ar[l]\ar[d]^{=}  \\
    C^*_{Z_1'}(M_1,\mathcal{H}_1)^G  & C^*(Z_1',\chi_{Z_1'}\mathcal{H}_1)^G\ar[l]&C^*(Y_1,\chi_{Y_1}\mathcal{H}_1)^G,\ar[l]   }
  \]
  which is defined by natural embedding, there are
  \[
  \xymatrix{
     K_*(C^*_{Z_1}(M_1,\mathcal{H}_1)^G) \ar[d]\ar[r]^{\cong}  & K_*(C^*(Z_1,\chi_{Z_1}\mathcal{H}_1)^G) \ar[d]^{\cong}\ar[r]^{\cong} & K_*(C^*(Y_1,\chi_{Y_1}\mathcal{H}_1)^G) \ar[d]^{=}\\
    K_*(C^*_{Z_1'}(M_1,\mathcal{H}_1)^G)  \ar[r]^{\cong}& K_*(C^*(Z_1',\chi_{Z_1'}\mathcal{H}_1)^G) \ar[r]^{\cong} &K_*(C^*(Y_1,\chi_{Y_1}\mathcal{H}_1)^G).   }
  \]
  As a result, $\text{Index}^G(D)$ and $\text{Index}^G(D)'$ induce the same one
  \[
  \text{Index}_r^G(D_1,D_2)\in K_m(C^*(Y_1,\chi_{Y_1}\mathcal{H}_1)^G)
  \]
  by diagram chasing.
  \end{proof}

  \begin{prop}
  If we choose another equivariant coarse map $q'$ from $Y_2$ to $Y_1$ close to $q$, then the relative equivariant coarse index will not change.
  \end{prop}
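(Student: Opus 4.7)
The plan is to observe that almost none of the ingredients in the construction of $\text{Index}_r^G(D_1,D_2)$ actually see the map $q$, and the one place it enters can be arranged to be literally the same operator after switching $q$ to $q'$. Concretely, the geometric modules $\mathcal{H}_i$, the embeddings $\iota_i$, the partial isometry $V\otimes 1$ (hence the $\ast$-isomorphisms $\Phi$, $\Psi$), and therefore the $C^\ast$-algebras $\mathcal{A}$, $\mathcal{B}$ together with the boundary map $\partial:K_{m+1}(\mathcal{B}/\mathcal{A})\to K_m(\mathcal{A})$, are defined purely in terms of $h$ and have no dependence on $q$ whatsoever. The class $[\chi(D)]\in K_{m+1}(\mathcal{B}/\mathcal{A})$ is also independent of $q$. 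The only step in the construction that uses $q$ is the choice of an equivariant isometric cover $Q:\chi_{Z_2}\mathcal{H}_2\to\chi_{Z_1}\mathcal{H}_1$ of the map $\tilde{q}:Z_2\to Z_1$ (equal to $h^{-1}$ on $Z_2-Y_2$ and to $q$ on $Y_2$), which is then used to build $W$ and the splitting $\text{ad}_W$ that produces the isomorphism $K_*(\mathcal{A})\cong K_*(C^*_{Z_1}(M_1,\mathcal{H}_1)^G)\oplus K_*(C^*(M_2,\mathcal{H}_2)^G)$.

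The main step is therefore to show that a single cover $Q$ can be used for both $\tilde{q}$ and the analogous map $\tilde{q}'$ built from $q'$. Let $\varepsilon'>0$ bound the distance between $q$ and $q'$ on $Y_2$ (closeness). Since $\tilde{q}$ and $\tilde{q}'$ agree on $Z_2-Y_2$, one has $d(\tilde{q}(y),\tilde{q}'(y))\le\varepsilon'$ for every $y\in Z_2$. If $Q$ is an $\varepsilon$-cover of $\tilde{q}$, then for every $(x,y)\in\text{supp}(Q)$,
\[
d(x,\tilde{q}'(y))\le d(x,\tilde{q}(y))+d(\tilde{q}(y),\tilde{q}'(y))\le \varepsilon+\varepsilon',
\]
so $Q$ is an $(\varepsilon+\varepsilon')$-cover of $\tilde{q}'$ as well. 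In particular the same $Q$ is an admissible equivariant isometric cover in the construction for both $q$ and $q'$.

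With this observation, the operator
\[
W=\begin{pmatrix} Q & 0 \\ 0 & V^*\otimes 1\end{pmatrix}:\mathcal{H}_2\longrightarrow\mathcal{H}_1
\]
is literally the same for the two constructions, hence the splitting $\text{ad}_W:C^*(M_2,\mathcal{H}_2)^G\to\mathcal{A}$ is identical. Consequently, the direct sum decompositions of $K_m(\mathcal{A})$ coming from $q$ and from $q'$ coincide, and the projection of $\text{Index}^G D=\partial[\chi(D)]$ onto the $K_m(C^*_{Z_1}(M_1,\mathcal{H}_1)^G)\cong K_m(C^*(Y_1,\chi_{Y_1}\mathcal{H}_1)^G)$ summand is unchanged. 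This yields the equality of the two relative equivariant coarse indices in $K_m(C^*(Y_1)^G)$.

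The only point that requires any real care is the verification that the map $M_2\to M_1$ built from $q'$ still satisfies condition (iii) of the relative equivariant coarse index data, which is immediate from the fact that closeness preserves coarseness, together with checking that if one insists on working with the original choice of $\varepsilon$ in $Q$, one may instead appeal to the previous proposition that different admissible $Q$'s give the same $(\text{ad}_W)_*$ in $K$-theory, so even if one prefers to pick separate $Q$ and $Q'$ tailored to $\tilde{q}$ and $\tilde{q}'$ respectively, the resulting splittings agree in $K$-theory.
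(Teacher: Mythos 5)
Your argument is correct. The paper states this proposition without supplying a proof, so there is nothing to compare against line by line, but the reasoning you give is exactly what the construction calls for: the algebras $\mathcal{A},\mathcal{B}$, the boundary map $\partial$, and the class $[\chi(D)]$ are all manufactured from $h$ (via $V\otimes 1$, $\Phi$, $\Psi$) and never see $q$; the only appearance of $q$ is in choosing the equivariant isometric cover $Q$ of the map $Z_2\to Z_1$, and the triangle inequality shows that any $\varepsilon$-cover of $\tilde q$ is an $(\varepsilon+\varepsilon')$-cover of $\tilde q'$, so the same $W$ (hence the same splitting $\text{ad}_W$) serves both. Your fallback remark — that even if one insists on picking a separate cover $Q'$ tailored to $\tilde q'$, the preceding proposition on independence from the choice of cover still gives $(\text{ad}_W)_*=(\text{ad}_{W'})_*$ because $Q$ and $Q'$ both cover either of the two (close) maps — is the right way to close the argument without relying on literal equality of operators.
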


     At the end of this section, we give an example of a set of relative equivariant coarse index data.

  \begin{ex} Let $M_1$ be the manifold $$\cdots \#T^2\# T^2\#T^2\#\Big(S^1\times [0,\infty)\Big)$$ and $M_2$ be $S^1\times \mathbb{R}$ equipped with the Riemannian metrics induced in $\mathbb{R}^3$ where $\#$ means the connected sum of manifolds. Then $S_i=\coprod_{x\in M_i}Cl(T_x M_i)$ are Dirac bundles on $M_i$ with Dirac operator $D_i$ which are isomorphic to signature operators. Take
  $$Y_1=M_1-\Big(S^1\times (1,\infty)\Big), \ \ \ Y_2=S^1\times (-\infty,1].$$
We can put $M_i$ into $\mathbb{R}^3$ along the $y$-axis such that they are symmetric about the $xOy$-plane and $yOz$-plane and the nontrivial actions of $G=\mathbb{Z}_2$ on $M_i$ are defined by
\[
\mathbb{R}^3\longrightarrow \mathbb{R}^3, (x,y,z)\longmapsto (-x,y,z).
\]
  Then obviously there exist a $G$-equivariant coarse map from $M_2$ to $M_1$ satisfying that it is identity when restricted on $M_2-Y_2$ to $M_1-Y_1$ and it is still a coarse map to $Y_1$ when restricted on $Y_2$. So these form a set of relative equivariant coarse index data.
  \end{ex}
\section{Relative Equivariant Coarse Index Theorem}

In this section, we prove the relative equivariant coarse index theorem which indicates a fomulation connecting the relative equivariant coarse index with the localized equivariant coarse indices.

\begin{thm}If
\begin{enumerate}
  \item $(M_i,S_i,D_i, Y_i,h, q)$ is a set of relative equivariant coarse index data and $m$ is the dimension of $ M_i$;
  \item the curvature operator $\mathcal{R}_i$ on the Dirac bundles $S_i$ are uniformly positive outside $Z_i=\overline{O(Y_i,s)}$ for some $s>0$, that is, there is a constant $\varepsilon>0$ such that
  \[
  \mathcal{R}_{ix}\geq \varepsilon^2I,\ \ \forall x\in M_i-Z_i, i=1,2;
  \]
\end{enumerate}
then
\begin{enumerate}
\item $D_i$ has the localized equivariant coarse index
 \[
 \text{Index}_{Z_i}^G (D_i) \in K_m(C^*_{Z_i}(M)^G)\cong K_m(C^*(Y_i)^G),
 \]
 and denote by $\text{Index}_{Y_i}^G (D_i)$ their images in $K_m(C^*(Y_i)^G)$;
\item the identity
\[
\text{Index}^G_r(D_1,D_2)=\text{Index}_{Y_1}^G D_1-q_*\Big(\text{Index}_{Y_2}^G D_2\Big)
\]
holds in $K_m(C^*(Y_1)^G)$.
\end{enumerate}
\end{thm}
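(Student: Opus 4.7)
The plan is to exploit the splitting $K_m(\mathcal{A})\cong K_m(C^*_{Z_1}(M_1,\mathcal{H}_1)^G)\oplus K_m(C^*(M_2,\mathcal{H}_2)^G)$ provided by $\text{ad}_W$, together with the naturality of the boundary map $\partial\colon K_{m+1}(\mathcal{B}/\mathcal{A})\to K_m(\mathcal{A})$ with respect to the projections $\mathcal{A}\to C^*(M_i,\mathcal{H}_i)^G$.

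First, the positivity hypothesis allows us to invoke the construction of Section 2.2 on each $(M_i,D_i)$ separately, producing the localized equivariant coarse indices $\text{Index}^G_{Z_i}(D_i)\in K_m(C^*_{Z_i}(M_i,\mathcal{H}_i)^G)$, and the standard isomorphism $K_m(C^*_{Z_i}(M_i)^G)\cong K_m(C^*(Y_i)^G)$ delivers $\text{Index}^G_{Y_i}(D_i)$. Under the embedding $\mathcal{A}\hookrightarrow C^*(M_1,\mathcal{H}_1)^G\oplus C^*(M_2,\mathcal{H}_2)^G$, naturality of $\partial$ applied to the commutative diagram relating $(\mathcal{A},\mathcal{B},\mathcal{B}/\mathcal{A})$ to the two individual sequences sends $\text{Index}^GD$ to $(\text{Index}^GD_1,\text{Index}^GD_2)$; under positivity these further equal $(j_{Z_i})_*(\text{Index}^G_{Z_i}D_i)$, where $j_{Z_i}\colon C^*_{Z_i}(M_i)^G\hookrightarrow C^*(M_i)^G$ denotes the inclusion.

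Write $\text{Index}^GD=(x_1,x_2)$ under the splitting; by the definition of the relative index, $x_1=\text{Index}^G_r(D_1,D_2)$, while the projection $\mathcal{A}\to C^*(M_2,\mathcal{H}_2)^G$ gives $x_2=\text{Index}^GD_2$. The block form
\[
W=\begin{pmatrix}Q&0\\0&V^*\otimes 1\end{pmatrix}
\]
with respect to $\mathcal{H}_i=\chi_{Z_i}\mathcal{H}_i\oplus\chi_{Z_i^c}\mathcal{H}_i$ shows that $\text{ad}_W$ restricts to a $*$-homomorphism $C^*_{Z_2}(M_2)^G\to C^*_{Z_1}(M_1)^G$ implementing the equivariant coarse map $\psi\colon Z_2\to Z_1$ given by $h^{-1}$ on $Z_2-Y_2$ and $q$ on $Y_2$; under the isomorphisms $K_*(C^*_{Z_i})\cong K_*(C^*(Y_i))$ this map becomes $q_*$. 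The first-factor projection $\mathcal{A}\to C^*(M_1,\mathcal{H}_1)^G$ then produces the identity
\[
(j_{Z_1})_*\bigl(x_1+q_*(\text{Index}^G_{Y_2}D_2)\bigr)=(j_{Z_1})_*(\text{Index}^G_{Y_1}D_1)
\]
in $K_m(C^*(M_1,\mathcal{H}_1)^G)$.

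The main obstacle is that $(j_{Z_1})_*$ may fail to be injective, so this last identity must be upgraded to one in $K_m(C^*_{Z_1}(M_1)^G)$. I will address this by carrying out the computation directly at the level of $\mathcal{A}$ in the spirit of the double-construction proposition: choose explicit idempotent (or unitary, in the even-dimensional case) representatives of $[\chi(D)]$ in $\mathcal{B}/\mathcal{A}$ built from $\chi(D_1)$, $\chi(D_2)$, and the partial isometry $V\otimes 1$, apply the concrete boundary-map formula, and use the first-coordinate extractor $(a,b)\mapsto a-WbW^*\in C^*_{Z_1}(M_1,\mathcal{H}_1)^G$ (analogous to the formula for $\mathcal{E}(e,f)$) to read off $x_1$ without detouring through $K_m(C^*(M_1)^G)$. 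This yields $x_1=\text{Index}^G_{Z_1}D_1-Q_*(\text{Index}^G_{Z_2}D_2)$ in $K_m(C^*_{Z_1}(M_1)^G)$, and translating via $K_*(C^*_{Z_i})\cong K_*(C^*(Y_i))$ produces the stated formula.
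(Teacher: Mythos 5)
Your first two paragraphs correctly track the paper up to the critical obstacle, and your diagnosis of that obstacle is exactly right: pushing $\text{Index}^GD$ to the first factor $C^*(M_1,\mathcal{H}_1)^G$ only produces the identity after applying $(j_{Z_1})_*$, and that map need not be injective. However, the fix you propose is vague precisely at the point where the real argument has to happen, and as stated it does not go through.

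The phrase ``in the spirit of the double-construction proposition'' masks two problems. First, $\mathcal{A}$ is not a double $D_C(I)$: its two coordinates live in \emph{different} algebras $C^*(M_1,\mathcal{H}_1)^G$ and $C^*(M_2,\mathcal{H}_2)^G$, so the formula $\mathcal{E}(e,f)$ and its associated homotopies do not transport verbatim; you would have to redo the argument for a general split extension, using $\gamma''=\text{ad}_W$ in place of the diagonal. Second, and more fundamentally, the step ``this yields $x_1=\text{Index}^G_{Z_1}D_1-Q_*(\text{Index}^G_{Z_2}D_2)$'' is asserted, not proved. To identify the first-coordinate extraction of $\text{Index}^GD$ with a \emph{localized} index difference, you must know that the boundary-map representative built from $\chi(D)$ admits a lift into the smaller subalgebra $C^*_{Z_1}(M_1,\mathcal{H}_1)^G\oplus C^*_{Z_2}(M_2,\mathcal{H}_2)^G$; this is where the curvature positivity enters, via $f(D)\oplus 0\in C^*_{Z_1}(M_1,\mathcal{H}_1)^G\oplus C^*_{Z_2}(M_2,\mathcal{H}_2)^G$ for $f\in C_c(-\varepsilon,\varepsilon)$. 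Without invoking this, the first coordinate you read off is only determined modulo $\ker(j_{Z_1})_*$, which is exactly the problem you set out to avoid.

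The paper's resolution is cleaner and worth comparing with your plan. It introduces the intermediate ideal $\mathcal{J}=C^*_{Z_1}(M_1,\mathcal{H}_1)^G\oplus C^*_{Z_2}(M_2,\mathcal{H}_2)^G\subseteq\mathcal{A}$ (this is where positivity is used, via the support of $\chi(D)^2-1$), observes that $\partial''[\chi(D)]''=\text{Index}^G_{\text{loc}}D=(\text{Index}^G_{Z_1}D_1,\text{Index}^G_{Z_2}D_2)$ lifts $\text{Index}^GD$ along $\delta''_*\colon K_m(\mathcal{J})\to K_m(\mathcal{A})$, and then chases a three-row commutative diagram whose middle row is a split exact sequence for $\mathcal{J}$ with splitting compatible with $\text{ad}_W$. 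Because the middle row already has $C^*_{Z_1}(M_1)^G$ as its kernel, the first-coordinate extraction happens directly in the localized algebra and gives $\text{Index}^G_{Z_1}D_1-Q_*(\text{Index}^G_{Z_2}D_2)$ with no injectivity issue. So the idea missing from your sketch is precisely this $\mathcal{J}$-lift: once you know $\text{Index}^GD$ has a canonical preimage in $K_m(\mathcal{J})$ and that the splitting maps of the two rows are compatible, the conclusion follows formally. Your explicit-idempotent route can in principle be made rigorous (the paper does something like it in Section 5 for the trace formula), but it needs this same localization input, and you would still have to redo the $\mathcal{E}(e,f)$ machinery for the non-double split extension $\mathcal{A}$.
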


\begin{proof}
The existence of $\text{Index}_{Z_i}^G (D_i)$ for $i=1,2$ follows from the construction in section 2. Carry on using the symbols in previous sections. Define a $C^*$-algebra
\[
\mathcal{J}=C^*_{Z_1}(M_1,\mathcal{H}_1)^G\oplus C^*_{Z_2}(M_2,\mathcal{H}_2)^G\subseteq \mathcal{A}.
\]
We know that $f(D)\oplus 0\in \mathcal{J}$ for every $f\in C_c(-\varepsilon,\varepsilon)$ by lemma 2.3 in \cite{Roe2016}. Take a normalizing function $\chi$ satisfying $\chi^2-1$ is supported in $(-\varepsilon,\varepsilon)$.

 When $m$ is odd, the equivalence class of $\frac{\chi(D)+1}{2}\oplus 0$ in $\mathcal{B/J}$ is a projection independent of such kind of $\chi$.

 When $m$ is even, the equivalence class of $\chi(D)_-\oplus 0$ in $\mathcal{B/J}$ is a unitary independent of such kind of $\chi$.

 Let
 \[
  [\chi(D)]''=\left\{
              \begin{array}{ll}
                \left[\frac{1+\chi(D)}{2}\oplus 0\right]\in K_{m+1}(\mathcal{B/J}), & \hbox{m is odd;} \\
                \left[\chi(D)_-\oplus 0 \right]\in K_{m+1}(\mathcal{B/J}), & \hbox{m is even;}
              \end{array}
            \right.
  \]
The following commutative diagram of two exact sequences of $C^*$-algebras
\[
\xymatrix{
   0  \ar[r] & \mathcal{J} \ar[d] \ar[r] & \mathcal{B} \ar[d]^{=} \ar[r] &\mathcal{ B/J} \ar[d] \ar[r] & 0  \\
  0 \ar[r] & \mathcal{A} \ar[r] &\mathcal{ B} \ar[r] & \mathcal{B/A} \ar[r] & 0   }
\]
induces a commutative diagram in $K$-theory:
\[
\xymatrix{
 K_{m+1}(\mathcal{B/J}) \ar[d] \ar[r]^{\ \ \ \partial''} & K_m(\mathcal{J}) \ar[d]  \\
 K_{m+1}(\mathcal{B/A}) \ar[r]^{\ \ \ \partial} & K_m(\mathcal{A})  }
\]
Define
  \[
  \text{Index}_{\text{loc}}^GD=\partial'' [\chi(D)]''\in K_m(\mathcal{J}),
  \]
  which satisfies
  \[
  \xymatrix{
    [\chi(D)]'' \ar[d] \ar[r]^{\partial''} & \text{Index}_{\text{loc}}^GD \ar[d] \\
    [\chi(D)] \ar[r]^{\partial} & \text{Index}^GD,  }
  \]
  and
  \[\text{Index}_{\text{loc}}^GD=\left(\text{Index}^G_{Z_1}D_1,\text{Index}^G_{Z_2}D_2\right)\in K_m(\mathcal{J}).\]

  There is another commutative diagram of $C^*$-algebras
  \[
  \xymatrix{
   0  \ar[r] & C^*(Y_1)^G\ar[d]^{\cong}_{\delta}\ar[r]^{\alpha\ \ \ \ \ \ \ \ } & C^*(Y_1)^G\oplus C^*(Y_2)^G \ar[d]^{\cong}_{\delta'}\ar[r]^{\ \ \ \ \ \ \ \ \ \beta}& C^*(Y_2)^G\ar[d]^{\cong}\ar[r] & 0   \\
   0  \ar[r] & C^*_{Z_1}(M_1)^G\ar[d]^{=} \ar[r]^{\alpha'} & \mathcal{J} \ar[d]_{\delta''} \ar[r]^{\beta'} & C^*_{Z_2}(M_2)^G\ar[d] \ar[r] & 0  \\
  0 \ar[r] & C^*_{Z_1}(M_1)^G \ar[r]^{\alpha''} & \mathcal{A} \ar[r]^{\beta''} & C^*(M_2)^G \ar[r] & 0,
  }
  \]
  where $\alpha$, $\alpha'$, $\alpha''$ are the embedding maps to the first summands and $\beta$, $\beta'$, $\beta''$ are the projective maps to the second summands. The three rows are all split exact sequences of $C^*$-algebras with splitting maps $\gamma$, $\gamma'$ and $\gamma''$ respectively where $\gamma'$ and $\gamma''$ are exactly $\text{ad}_W$ described in the previous section and $\gamma$ is induced by $\gamma'$ on graph above. After transfering it to $K$-theory level, the commutative diagram becomes
  {\scriptsize
  \[
  \xymatrix{
   0  \ar[r] & K_*(C^*(Y)^G)\ar[d]^{\cong}_{\delta_*}\ar[r]^{\alpha_*\ \ \ \ \ \ \ \ \ \ \ \ \ \ \ \ } & K_*(C^*(Y)^G)\oplus K_*(C^*(Y)^G) \ar[d]^{\cong}_{\delta'_*}\ar[r]^{\ \ \ \ \ \ \ \ \ \ \ \ \ \beta_*}& K_*(C^*(Y)^G)\ar[d]^{\cong}\ar[r] & 0   \\
   0  \ar[r] & K_*(C^*_{Z_1}(M_1)^G) \ar[d]^{=} \ar[r]^{\ \ \ \ \ \ \alpha'_*} & K_*(\mathcal{J}) \ar[d]_{\delta''_*} \ar[r]^{\beta'_*\ \ \ \ \ } & K_*(C^*_{Z_2}(M_2)^G) \ar[d] \ar[r] & 0  \\
  0 \ar[r] & K_*(C^*_{Z_1}(M_1)^G) \ar[r]^{\ \ \ \ \ \ \ \alpha''_*} & K_*(\mathcal{A}) \ar[r]^{\beta''_*\ \ \ \ \ } & K_*(C^*(M_2)^G) \ar[r] & 0 .  }
  \]
  }

  By our definition of the relative equivariant coarse index, component of $\text{Index}^GD$ in $K_m(C^*_{Z_1}(M_1)^G)$ is mapped to $\text{Index}_r^G(D_1,D_2)\in K_m(C^*(Y_1)^G)$ exactly under the isomorphism
  \[
  \delta^{-1}_*:K_m(C^*_{Z_1}(M_1)^G)\overset{\cong}{\longrightarrow}K_m(C^*(Y_1)^G).
  \]
  That is,
  \[
  \text{Index}_r^G(D_1,D_2)=\delta^{-1}_*\circ\alpha''^{-1}_*\circ(\text{id}-\gamma''_*\circ\beta''_*)\left(\text{Index}^G(D)\right).\tag{*}
  \]
  We have known that $\text{Index}_{\text{loc}}^G(D)$ is a preimage of $\text{Index}^G(D)$ under the map
  \[
  \delta''_*:K_m(\mathcal{J})\longrightarrow K_m(\mathcal{A}),
  \]
  and obviously
  \[
  \left(\text{Index}^G_{Y_1}(D_1),\text{Index}^G_{Y_2}(D_2)\right)\in  K_m(C^*(Y_1)^G)\oplus K_m(C^*(Y_2)^G)
  \]
  is a preimage of \[\text{Index}_{\text{loc}}^GD=\left(\text{Index}^G_{Z_1}(D_1),\text{Index}^G_{Z_2}(D_2)\right)\in K_m(\mathcal{J})\]
  under the map
  \[
  \delta'_*:K_*(C^*(Y_1)^G)\oplus K_*(C^*(Y_2)^G) \overset{\cong}{\longrightarrow}K_m(\mathcal{J}).
  \]
  So the relationship (*) can be analyzed by diagram chasing
  \[
  \begin{array}{lll}
  \text{Index}_r^G(D_1,D_2) & = &\delta^{-1}_*\circ\alpha''^{-1}_*\circ\big(\text{id}-\gamma''_*\circ\beta''_*\big)\left(\text{Index}^GD\right)\\
   & = &\delta^{-1}_*\circ\alpha'^{-1}_*\circ\big(\text{id}-\gamma'_*\circ\beta'_*\big)\left(\text{Index}^G_{\text{loc}}D\right)\\
  & = & \alpha^{-1}_*\circ(\text{id}-\gamma_*\circ\beta_*)\left(\text{Index}^G_{Y_1}D_1,\text{Index}^G_{Y_2}D_2\right)\\
  & = &
  \alpha^{-1}_*\left(\text{Index}^G_{Y_1} D_1-q_*(\text{Index}^G_{Y_1} D_2),\ 0\right)\\
  &=&
  \text{Index}^G_{Y_1} D_1-q_*\Big(\text{Index}^G_{Y_2} D_2\Big).
  \end{array}
  \]
  As a result, we get that
  \[
  \text{Index}_r^G(D_1,D_2)=\text{Index}^G_{Y_1}(D_1)-q_*\Big(\text{Index}^G_{Y_2}(D_2)\Big)\in K_m(C^*(Y_1)^G).
  \]

\end{proof}

\section{Relative $L^2$-Index}\label{Relative L2 Index}

In this section, we define the relative $L^2$-index and give a  calculation formula after introducing a positive trace on the localized equivariant Roe algebras.

\subsection{Positive Traces and Canonical Examples}

\text{ }

In this subsection, we present the basic facts about positive traces and recall some canonical examples. More expositions can be found in Section 2.3 of \cite{Rufus}.

Let $C$ be a $C^\ast$-algebra, $C_+$ the collection of all positive elements of $C$ and $C^+$ the unitization of $C$.

\begin{defn}(cf. \cite{Rufus})
    A map $\tau:C_+\to [0,\infty]$ is called a positive trace on $C$ if
    \begin{enumerate}
        \item $\tau(0)=0$;
        \item for all $a\in C$, $\tau(a^\ast a)=\tau(aa^\ast)$;
        \item for all $a_1,a_2\in C_+$ and all $\lambda_1,\lambda_2\geq 0$, $\tau(\lambda_1a_1+\lambda_2a_2)=\lambda_1\tau(a_1)+\lambda_2\tau(a_2)$.
    \end{enumerate}
\end{defn}

For a positive trace $\tau$ on $C$, the set
$$C_\tau=\text{span}\{a\in C_+:\tau(a)<\infty\}=\{a\in C:\tau(|a|)<\infty\}$$
is an algebraic *-ideal in $C$ and it is not necessarily norm-closed. The $\tau$ can extend to a *-preserving linear functional $\tau:C_{\tau}\longrightarrow \mathbb{C}$. Let
\[
\sqrt{C_\tau}=\{a\in C:\tau(a^*a)<\infty\},
\]
which is also an algebraic *-ideal in $C$. There is the fact that
$C_\tau=\sqrt{C_\tau}\cdot\sqrt{C_\tau}$ and $\tau(ab)=\tau(ba)$ if $a,b\in\sqrt{C_\tau}$ or $a\in C_\tau, b\in C$. Moreover, we give a lemma as follows.

\begin{lem}\label{traceformulaextension}
If $\tau$ is a positive trace on a $C^*$-algebra $C$ and $C$ is a closed ideal in a $C^*$-algebra $C'$, then $\sqrt{C_\tau}$ and $C_\tau$ are algebraic ideals in $C'$ and
\[
\tau(ab)=\tau(ba),\ \ \forall a\in C_\tau,b\in C'.
\]
\end{lem}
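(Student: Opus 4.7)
The plan is to reduce everything to the two facts the paper has already recorded, namely that for elements in $\sqrt{C_\tau}$ and in $C_\tau$ the trace property holds \emph{within} $C$, and that $C_\tau=\sqrt{C_\tau}\cdot\sqrt{C_\tau}$ as a linear span. Since $C$ is a closed (two-sided) ideal in $C'$, for any $a\in C$ and $b\in C'$ the products $ab$ and $ba$ lie in $C$, so the ambient trace property in $C$ is always applicable after one multiplication.

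First I will establish that $\sqrt{C_\tau}$ is a two-sided algebraic ideal in $C'$. For $a\in\sqrt{C_\tau}$ and $b\in C'$, the operator inequality $b^*b\leq\|b\|^2\,1$ in the unitization of $C'$ yields $a^*b^*ba\leq\|b\|^2 a^*a$ as elements of $C_+$, so positivity and linearity of $\tau$ give
\[
\tau\bigl((ba)^*(ba)\bigr)=\tau(a^*b^*ba)\leq\|b\|^2\tau(a^*a)<\infty,
\]
hence $ba\in\sqrt{C_\tau}$. For the right-hand side, the same inequality gives $abb^*a^*\leq\|b\|^2 aa^*$; then, crucially using that $ab\in C$ so that the trace property $\tau(x^*x)=\tau(xx^*)$ applies within $C$, I obtain
\[
\tau\bigl((ab)^*(ab)\bigr)=\tau\bigl((ab)(ab)^*\bigr)\leq\|b\|^2\tau(aa^*)=\|b\|^2\tau(a^*a)<\infty,
\]
so $ab\in\sqrt{C_\tau}$. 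The ideal property for $C_\tau$ is then immediate from $C_\tau=\sqrt{C_\tau}\cdot\sqrt{C_\tau}$.

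For the trace identity, given $a\in C_\tau$ I write $a=\sum_{i=1}^n x_iy_i$ with $x_i,y_i\in\sqrt{C_\tau}$ and let $b\in C'$. By the ideal property just proved, $bx_i$ and $y_ib$ again lie in $\sqrt{C_\tau}$, so the trace identity on $\sqrt{C_\tau}$ (which the paper records) gives
\[
\tau(ab)=\sum_i\tau\bigl(x_i(y_ib)\bigr)=\sum_i\tau\bigl((y_ib)x_i\bigr)=\sum_i\tau(y_ibx_i),
\]
\[
\tau(ba)=\sum_i\tau\bigl((bx_i)y_i\bigr)=\sum_i\tau\bigl(y_i(bx_i)\bigr)=\sum_i\tau(y_ibx_i),
\]
so the two sums agree.

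The only genuinely delicate point is the right-ideal check: the naive bound on $(ab)^*(ab)=b^*a^*ab$ does not directly factor $b^*b$ out, and this is where I invoke the $C$-internal trace identity $\tau(x^*x)=\tau(xx^*)$ to swap to $(ab)(ab)^*$ before applying the operator inequality. Everything else is bookkeeping with linearity.
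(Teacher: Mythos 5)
Your proof is correct and follows essentially the same route as the paper: establish the ideal property for $\sqrt{C_\tau}$, deduce it for $C_\tau$ via $C_\tau=\sqrt{C_\tau}\cdot\sqrt{C_\tau}$, and then prove the trace identity by factoring through $\sqrt{C_\tau}$ and applying the already-recorded trace-swap there twice. The one place you depart slightly is the right-absorption check for $\sqrt{C_\tau}$: the paper only writes out the bound $(ab)^*(ab)\le\|a\|^2\,b^*b$ for $a\in C'$, $b\in\sqrt{C_\tau}$ and leaves the other side to the reader (it follows because $\sqrt{C_\tau}$ is $*$-closed, as $\tau(a^*a)=\tau(aa^*)$), whereas you prove it directly by first swapping $\tau\bigl((ab)^*(ab)\bigr)=\tau\bigl((ab)(ab)^*\bigr)$ inside $C$ and then applying the operator inequality. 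Both are valid; yours is a touch more explicit. Likewise, for the trace identity the paper first reduces to $a\ge 0$ and writes $a=\sqrt{a}\sqrt{a}$, while you use the general decomposition $a=\sum x_iy_i$; this is just bookkeeping and the mechanism is identical.
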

\begin{proof}
If $a\in C'$ and $b\in \sqrt{C_\tau}$, then $ab\in C$ and $(ab)^*(ab)\leq ||a||^2 b^*b$ which means $ab\in \sqrt{C_\tau}$. So $\sqrt{C_\tau}$ is an algebraic ideal in $C'$.

If $a\in C'$, $b\in C_\tau$ and $b\geq 0$, then $\sqrt{b}\in \sqrt{C_\tau}$ and $ab=a\sqrt{b}\sqrt{b}\in C_\tau$. This concludes that $C_\tau$ is an algebraic ideal in $C'$.

To prove the formula $\tau(ab)=\tau(ba)$ for every $a\in C_\tau$ and $b\in C'$, we can suppose that $a\geq 0$. Then $$\tau(ab)=\tau(\sqrt{a}\sqrt{a}b)=\tau(\sqrt{a}b\sqrt{a})=\tau(b\sqrt{a}\sqrt{a})=\tau(ba).$$
\end{proof}

We call the elements in $C_\tau$ the trace-class elements about $\tau$ and the elements in $\sqrt{C_\tau}$ the Hilbert-Schmidt element about $\tau$.

\begin{defn}(cf. \cite{Rufus})
A positive trace $\tau$ on $C$ is called lower semicontinuous if for any norm convergent sequence $(a_n)$ in $C_+$,
        $$\tau(\lim_{n\to\infty} a_n)\le \liminf_{n\to\infty}\tau(a_n).$$
\end{defn}

\begin{defn}(cf. \cite{Rufus})
    A positive trace $\tau$ on $C$ is called densely defined if the set $\{a\in C_+: \tau(a)<\infty\}$ is dense in $C_+$, or equivalently, $C_\tau$ is dense in $C$.
\end{defn}

\begin{defn}
A continuous linear functional $\tau:C\longrightarrow \mathbb{C}$ on the $C^*$-algebra $C$ is called a bounded positive trace if $\tau (ab)=\tau(ba)$ for every $a,b\in C$ and $\tau(c)\geq 0$ for every $c\in C_+$.
\end{defn}

\begin{rem}
Every bounded positive trace on $C$ can be regarded as a densely defined, lower semicontinuous positive trace.
\end{rem}

If $\tau$ is a positive trace on $C$, then for every positive integer $n\in \mathbb{Z}_+$, the map
\[
\tau^n:M_{n\times n}(C)_+\longrightarrow [0,\infty],
\]
\[
a=(a_{ij})\longmapsto \sum_{j=1}^n \tau(a_{jj}),
\]
is a positive trace on $M_{n\times n}(C)$. If $\tau$ is lower semicontinuous or densely defined, so is $\tau^n$.

Suppose that $\tau$ is a densely defined, lower semicontinuous positive trace on $C$. The Lemma 2.3.13 and Theorem 2.3.16 in \cite{Rufus} tell us that the map
\[
K_*(C_\tau)\longrightarrow K_*(C)
\]
induced by the inclusion is an isomorphism. Therefore, for every element $x\in K_0(C)$, there are idempotents $e$ and $f$ in $M_{n\times n}(C_\tau ^+)$ for some positive integer $n$ such that $x=[e]-[f]$ and $e-f$ in $M_{n\times n}(C_\tau)$. 
Define a group homomorphism
\[
\tau_*:K_0(C)\longrightarrow \mathbb{R}, x=[e]-[f]\longmapsto \sum_{i=1}^{n}\tau(e_{ii}-f_{ii})=\tau^n(e-f).
\]
This definition doesn't rely on the choice of the representative elements $e$ and $f$ as described above in $M_{n\times n}(C_\tau ^+)$ .

Now we look at some examples of canonical traces.

\begin{ex}(cf. \cite{Rufus})
 Let $H$ be a Hilbert space and $(e_j)_{j\in\Lambda}$ a set of orthonormal basis for $H$. The canonical trace on $B(H)$ is defined by
        $$\mathrm{Tr}: B(H)_+\to [0,\infty], \ \  T\longmapsto \sum_{j\in \Lambda}\langle Te_j,e_j\rangle,$$
which is independent of the choice of the orthonormal basis $(e_j)_{j\in\Lambda}$ and is a positive trace on $B(H)$. In this case, $\text{span}\{T\in B(H)_+:\mathrm{Tr}(T)<\infty\}$ is exactly the set of trace-class operators $L^1(H)$. Let $K(H)$ be the set of compact operators on $H$ and restrict $\mathrm{Tr}$ on $K(H)_+$. Then $\mathrm{Tr}$ is a densely defined positive trace on $K(H)$. Call it the canonical trace on $K(H)$. The induced map
\[
\mathrm{Tr}_*:K_0(K(H))\longrightarrow \mathbb{Z},
\]
is a group isomorphism.
\end{ex}

\begin{ex}(cf. \cite{Rufus})
For a countable discrete group $G$, the right regular representation of $G$ on $l^2(G)$ is defined by
\[
\rho:G\longrightarrow U(l^2(G)), g\longmapsto \rho_g,
\]
and
\[
\rho_g(\delta_h)=\delta_{hg^{-1}}, \forall h\in G,
\]
where $\delta_h$ is the characteristic function on the subset $\{h\}$ and obviously $\{\delta_h:h\in G\}$ forms an orthonormal basis for $l^2(G)$. The reduced group $C^*$-algebra of $G$ is given by
\[
C^\ast_\rho(G)=\overline{\text{span}}\{\rho_g:g\in G\}.
\]
The canonical trace on the reduced group $C^\ast$-algebra $C^\ast_\rho(G)$ is defined by
        $$\tau_G: C_\rho^\ast(G)\longrightarrow \mathbb{C}, T\longmapsto \langle T\delta_\mathbb{1}, \delta_\mathbb{1}\rangle,$$
where $\mathbb{1}$ is the identity element of $G$. It is a bounded positive trace on $C_\rho^*(G)$. Then there is a group homomorphism
\[
(\tau_G)_*: K_0\left(C_\rho^*(G)\right)\longrightarrow \mathbb{R}.
\]
\end{ex}

\begin{ex}\label{TrX}(cf. \cite{Rufus})Let $G$ be a countable discrete group acting on a proper metric space $X$ properly and cocompactly by isometries. 
In this case, a bounded fundamental domain always exists according to the Lemma A.2.9 in \cite{Rufus}. Let $(\rho,H, U)$ be a locally free geometric $X$-$G$ module and $E$ a bounded fundamental domain. There is $U_g\chi_E =\chi_{gE}U_g$ for every $g\in G$. Define
\[
J:H\overset{\cong}{\longrightarrow} l^{2}(G)\otimes \chi_E H,\qquad \varphi\longmapsto \sum_{g\in G}\delta_{g}\otimes \chi_{E}U_{g}^{\ast} \varphi
\]
		is a well-defined unitary isomorphism, which is equivariant when $l^2(G)\otimes \chi_E H$ is
		equipped with the tensor product of the left regular representation on $l^2(G)$ and trivial representation on $\chi_E H$. Moreover, regarding $C^{\ast}_{\rho}(G)\otimes  K(\chi_E H)$ as a subset of $B(l^2(G)\otimes \chi_E H)$, conjugation
		by $J$ induces the following isomorphisms
		$$\text{ad}_J:	C^{\ast}(X,H)^{G}\overset{\cong}{\longrightarrow} C^{\ast}_{\rho}(G)\otimes  K(\chi_E H),\ T\longmapsto  JTJ^\ast,$$
where
$$JTJ^\ast=\sum_{g\in G}\rho_g\otimes \chi_E TU_g\chi_E.$$
Regarding
\[C^{\ast}_{\rho}(G)\otimes  K(\chi_E H)\subseteq B(l^2(G))\otimes B(\chi_EH)\subseteq B(l^2(G)\otimes \chi_EH)
\]
and using an orthonormal basis $(e_j)_{j\in \Lambda}$ in $\chi_E H$, 
there is a positive trace $\tau_G\otimes \text{Tr}$ on $C^{\ast}_{\rho}(G)\otimes  K(\chi_E H)$ defined by
\[\tau_G\otimes \text{Tr}:\left(C^{\ast}_{\rho}(G)\otimes  K(\chi_E H)\right)_+\longrightarrow [0,\infty],\]
\[
T\longmapsto \sum_{j\in \Lambda} \langle T(\delta_\mathbb{1}\otimes e_j),(\delta_\mathbb{1}\otimes e_j)\rangle,
\]
which is densely defined, lower seimcontinuous and independent of the choice of the basis involved in its instruction. Then we can define a positive trace on $C^{\ast}(X,H)^{G}$ by
$$\text{Tr}_{X}=(\tau_G\otimes \mathrm{Tr})\circ (\text{ad}_J) :C^*(X,H)^G_+\longrightarrow [0,\infty],$$
$$T\longmapsto (\tau_G\otimes \mathrm{Tr})(JTJ^{\ast})=
	\mathrm{Tr} \left(\chi_{E} T\chi_{{E}}\right).$$
Note that the trace $\text{Trace}_X$ does not depend on the choice of the bounded fundamental domain $E$: for any two choices of $E$, the resulting isomorphisms only differ by conjugation by unitary multipliers of the algebras $C^\ast(X,H)^G$. We call it the canonical trace on $C^*(X,H)^G$. As a result, $\mathrm{Tr}_X$ is a densely defined, lower semicontinuous positive trace. There is a group homomorphism
\[
(\text{Tr}_X)_*: K_0(C^*(X,H)^G)\longrightarrow \mathbb{R},
\]
satisfying the commutative diagram of groups
\[
\xymatrix{
  K_0(C^*(X,H)^G) \ar[d]_{(\text{Tr}_X)_*} \ar[r]^{(\text{ad}_J)_*\ \ \ \ \ \ \ }_{\cong\ \ \ \ \ \ \ } & K_0(C^{\ast}_{\rho}(G)\otimes  K(\chi_E H)) \ar[d]_{(\tau_G\otimes \text{Tr})_*}  & K_0(C_\rho^*(G)) \ar[l]^{\ \ \ \ \ \ \ \ \ \ \ \ \cong}\ar[d]^{(\tau_G)_*} \\
  \mathbb{R} \ar[r]^{=} & \mathbb{R} & \mathbb{R}\ar[l]_{=}   }
\]
where the group isomorphism from $K_0(C_\rho^*(G))$ to $K_0(C^{\ast}_{\rho}(G)\otimes  K(\chi_E H))$ is induced by the *-homomorphism
\[
C_\rho^*(G)\longrightarrow C^{\ast}_{\rho}(G)\otimes  K(\chi_E H), T\longmapsto T\otimes p,
\]
after fixing a one-rank projection $p$ on $\chi_E H$ and it do not rely on the choice of $p$.
\end{ex}

\begin{prop}\label{TrXnaturalproperty}
If a countable discrete group $G$ acting on proper metric spaces $X$ and $X'$ properly, cocompactly by isometries and $f:X\longrightarrow X'$ is a equivariant coarse map, then the following diagram is commutative
\[
\xymatrix{
  K_0(C^*(X,H)^G) \ar[rr]^{f_*} \ar[dr]_{(\text{Tr}_X)_*}
                &  &    K_0(C^*(X',H')^G) \ar[dl]^{\ \ (\text{Tr}_{X'})_*}    \\
                & \mathbb{R}                 },
\]
for locally free geometric modules $H$ and $H'$.
\end{prop}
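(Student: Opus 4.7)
The plan is to choose, by the construction in Chapter 4 of \cite{Rufus}, a $G$-equivariant isometric $\varepsilon$-cover $T\colon H\to H'$ of $f$, so that $f_*$ is realised by the $*$-homomorphism $\mathrm{ad}_T(S)=TST^*$. I would then prove the pointwise identity
\[
\mathrm{Tr}_{X'}(TST^*)=\mathrm{Tr}_X(S),\qquad S\in C^*(X,H)^G_+,
\]
as an equality in $[0,\infty]$. Once this is established, $\mathrm{ad}_T$ sends the algebraic $*$-ideal $C^*(X,H)^G_{\mathrm{Tr}_X}$ into $C^*(X',H')^G_{\mathrm{Tr}_{X'}}$ and intertwines the two traces there; combined with the $K$-theoretic isomorphisms $K_0(C_\tau)\cong K_0(C)$ of Lemma 2.3.13 and Theorem 2.3.16 of \cite{Rufus}, this immediately yields commutativity on $K_0$.

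The heart of the argument is the pointwise trace identity, and the main trick is to pass through the positive operator $p:=T^*\chi_{E'}T\in B(H)$, which satisfies $p\le T^*T=1$. Using $U'_g\chi_{E'}(U'_g)^*=\chi_{gE'}$, the equivariance of $T$, and $X'=\coprod_{g\in G} gE'$, one checks
\[
\sum_{g\in G}U_g\,p\,U_g^*=T^*\Bigl(\sum_{g\in G}\chi_{gE'}\Bigr)T=T^*T=1
\]
in the strong operator topology. The positive version of cyclicity (valid as equality in $[0,\infty]$) applied with $A=T^*\chi_{E'}$ gives $\mathrm{Tr}_{X'}(TST^*)=\mathrm{Tr}(pS)$. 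I would then fix a bounded fundamental domain $E\subseteq X$ and an orthonormal basis $(f_k)_{k\in\Lambda}$ of $\chi_E H$, so that $(U_g f_k)_{g\in G,\,k\in\Lambda}$ is an orthonormal basis of $H$. Expanding
\[
\mathrm{Tr}(pS)=\sum_{g,k}\langle p\,S^{1/2}U_gf_k,\,S^{1/2}U_gf_k\rangle=\sum_{g,k}\langle U_g^*p\,U_g\,S^{1/2}f_k,\,S^{1/2}f_k\rangle,
\]
where I use $U_g^*S^{1/2}U_g=S^{1/2}$ (by equivariance of $S$ and functional calculus), and then swapping the order of summation and invoking $\sum_g U_g^*p\,U_g=1$, the expression collapses to $\sum_k\langle Sf_k,f_k\rangle=\mathrm{Tr}(\chi_E S\chi_E)=\mathrm{Tr}_X(S)$.

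The main obstacle to watch is convergence bookkeeping: neither $pS$ nor $\chi_{E'}TST^*\chi_{E'}$ will in general be trace class, so every step above has to be read as an equality of unordered sums of non-negative extended reals, with cyclicity and the interchange of summation legitimised by Tonelli's theorem. Once this is done rigorously for $S\ge 0$, the extension to the algebraic trace ideal by linearity, together with the passage from the $\tau$-class ideal to $K_0$ through Lemma 2.3.13 and Theorem 2.3.16 of \cite{Rufus}, is routine; independence from the particular cover $T$ is already subsumed in the well-definedness of $f_*$ as recalled in the preliminaries.
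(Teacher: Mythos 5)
Your proof is correct, but it follows a genuinely different route from the paper's. Both you and the paper reduce the statement to the pointwise identity $\mathrm{Tr}_{X'}(TST^*)=\mathrm{Tr}_X(S)$ for $S\in C^*(X,H)^G_+$, after which passage to $K_0$ via Lemma 2.3.13 and Theorem 2.3.16 of \cite{Rufus} is standard and you describe it correctly. The difference lies in how the trace identity is established. The paper chooses a \emph{special} equivariant isometric cover $P$ satisfying $P(\chi_E H)\subseteq \chi_{E'}H'$, which makes $P^*\chi_{E'}P=\chi_E$ and reduces the identity to one line: $\mathrm{Tr}(\chi_E T\chi_E)=\mathrm{Tr}(\chi_{E'}PTP^*\chi_{E'})$. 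Your argument instead works with an \emph{arbitrary} equivariant isometric cover $T$, sets $p=T^*\chi_{E'}T$, and exploits the strong-operator identities $\sum_{g}U_g^* p\,U_g=\sum_g T^*\chi_{g^{-1}E'}T=T^*T=1$ together with the equivariance of $S^{1/2}$ and the orthonormal basis $(U_gf_k)_{g,k}$ of $H$ furnished by a bounded fundamental domain $E$. With all terms nonnegative, the interchange of the sum over $g$ with the trace is indeed justified by Tonelli, and the computation collapses to $\mathrm{Tr}(\chi_E S\chi_E)=\mathrm{Tr}_X(S)$, as you indicate. Your approach buys robustness: it does not depend on constructing a cover adapted to the fundamental domains, and it shows directly that the trace identity holds for every equivariant isometric cover, so invariance under the choice of cover is built in rather than deferred to the well-definedness of $f_*$. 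The paper's approach is shorter but relies on the existence of the adapted cover $P$, which in turn requires that $\chi_{E'}H'$ be large enough to receive an isometric copy of $\chi_E H$ supported near the graph of $f|_E$ — unproblematic for locally free modules with cocompact actions, but an extra construction nonetheless.
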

\begin{proof}
We can construct an equivariant isometric cover $P:H\longrightarrow H'$ for $f$ such that $P(\chi_E H)\subseteq \chi_{E'} H'$ where $E$ and $E'$ are bounded fundamental domains for $X$ and $X'$ respectively \cite{Rufus}. Then for every $T\in C^*(X,H)^G_+$, there is
\[
\text{Tr}_X(T)=\text{Tr}(\chi_ET\chi_E)=\text{Tr}(\chi_{E'}PTP^*\chi_{E'})=\text{Tr}_{X'}(PTP^*).
\]
\end{proof}
\subsection{Positive Traces on Localized Equivariant Roe Algebras}

\text{ }

Based on the previous subsection, we define a positive trace on the localized equivariant Roe algebras and prove some related properties. These results will
be useful to compute the relative $L^2$-index defined in the next subsection and to prove the relative $L^2$-index theorem stated in the last section.

Suppose that a countable discrete group $G$ acting on a proper metric space $X$ properly by isometries and $Z\subseteq X$ is a $G$-invariant closed subset satisfying $Z/G$ is compact. Denote by $Z_{(t)}$ the set $B(Z,t)$ for every positive $t$ then $G$ must act on $Z_{(t)}$ cocompactly. Fix a locally free geometric $X$-$G$ module $(\rho, H,U)$.

Through the natural inclusion, there is
\[
C^*_{Z}( X,  H)^G=\overline{\bigcup_{n=1}^\infty  C^*(Z_{(n)},\chi_{Z_{(n)}} H)^G}.
\]
Since $G$ acts on $Z_{(t)}$ properly, cocompactly and freely by isometries, we have
\[
\text{Tr}_{Z_{(t)}}:C^*(Z_{(t)},\chi_{Z_{(t)}} H)^G_+\longrightarrow [0,\infty],\ \forall t>0
\]
satisfying the commutative diagram
\[
\xymatrix{%
  C^*(Z_{(t)},\chi_{Z_{(t)}} H)^G_+ \ar[rr]^{\text{inclusion}} \ar[dr]_{\text{Tr}_{Z_{(t)}}}
                &  &    C^*(Z_{(r)},\chi_{Z_{(r)}} H)^G_+ \ar[dl]^{\ \ \ \text{Tr}_{Z_{(r)}}}    \\
                & [0,\infty]                }
\]
for every $0<t<r$.

Now define
\[
\mathrm{Tr}_Z^X:C^*_{Z}( X, H)^G_+\longrightarrow [0,\infty],
\]
\[
T\longrightarrow \lim_{t\to \infty}\mathrm{Tr}_{Z_{(t)}}\left(\chi_{Z_{(t)}} T \chi_{Z_{(t)}}\right).
\]
The limitation always exists in $[0,\infty]$ since it is monotonically increasing. There is $\mathrm{Tr}_Z^X=\mathrm{Tr}_{Z_{(t)}}$ on $C^*(Z_{(t)},\chi_{Z_{(t)}} H)^G_+$ for every $t>0$.

\begin{lem}
The map $ \text{Tr}_Z^X$ is a positive trace on $C^*_{ Z}( X,  H)^G$.
\end{lem}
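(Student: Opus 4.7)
The strategy is to reduce each trace axiom for $\mathrm{Tr}_Z^X$ either to the corresponding property of the canonical traces $\mathrm{Tr}_{Z_{(t)}}$ on the $G$-cocompact subalgebras $C^*(Z_{(t)},\chi_{Z_{(t)}}H)^G$, or to the ordinary operator trace on $B(H)$ after re-expressing $\mathrm{Tr}_Z^X$ through a fundamental-domain formula. I would begin by fixing a Borel fundamental domain $E$ for the $G$-action on $X$ and setting $E_t = E \cap Z_{(t)}$; because $G$ acts on $Z_{(t)}$ cocompactly, each $E_t$ is a bounded Borel fundamental domain for $G$ on $Z_{(t)}$, and the family is nested with $\bigcup_t E_t = E$. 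Example~\ref{TrX} then gives
\[
\mathrm{Tr}_{Z_{(t)}}(\chi_{Z_{(t)}} T \chi_{Z_{(t)}}) = \mathrm{Tr}(\chi_{E_t} T \chi_{E_t}).
\]
For a positive operator $T$ this last expression equals $\mathrm{Tr}(T^{1/2}\chi_{E_t}T^{1/2})$, which is monotonically non-decreasing in $t$; hence the defining limit exists in $[0,\infty]$, the equality $\mathrm{Tr}_Z^X(0)=0$ is trivial, and positive linearity descends from that of each $\mathrm{Tr}_{Z_{(t)}}$ by a termwise limit in $[0,\infty]$-arithmetic.

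For the trace identity $\mathrm{Tr}_Z^X(a^*a) = \mathrm{Tr}_Z^X(aa^*)$, I would upgrade the monotone convergence above via normality of the operator trace to the clean formula
\[
\mathrm{Tr}_Z^X(T) \;=\; \mathrm{Tr}(\chi_E T \chi_E), \qquad T \in C^*_Z(X,H)^G_+.
\]
Applied to $T = a^*a$, cyclicity of $\mathrm{Tr}$ yields $\mathrm{Tr}(\chi_E a^* a \chi_E) = \mathrm{Tr}(a\chi_E a^*) = \sum_{g\in G}\|\chi_{gE}a\chi_E\|_{\mathrm{HS}}^2$ after inserting the strong orthogonal decomposition $1 = \sum_g \chi_{gE}$. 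The $G$-equivariance $U_g a = a U_g$ together with $U_g \chi_E = \chi_{gE} U_g$ gives the isometric identity $\|\chi_{gE} a \chi_E\|_{\mathrm{HS}} = \|\chi_E a \chi_{g^{-1}E}\|_{\mathrm{HS}}$, and re-indexing by $h=g^{-1}$ converts the previous sum into $\sum_h \|\chi_E a \chi_{hE}\|_{\mathrm{HS}}^2 = \mathrm{Tr}(a^* \chi_E a) = \mathrm{Tr}(\chi_E a a^* \chi_E) = \mathrm{Tr}_Z^X(aa^*)$.

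The main obstacle is justifying the fundamental-domain formula itself, which rests on choosing the nested family $E_t \nearrow E$ compatibly and on applying normality of the ordinary operator trace to the monotone family $T^{1/2}\chi_{E_t}T^{1/2} \nearrow T^{1/2}\chi_E T^{1/2}$ of positive operators. Once this formula is in place, all three trace axioms follow mechanically from standard identities in $B(H)$, with the $G$-equivariance doing all of the ``equivariant'' work through the single identity $U_g\chi_E U_g^* = \chi_{gE}$.
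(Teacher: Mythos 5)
Your proposal is correct in substance and takes a noticeably different route from the paper's. The paper works \emph{at each fixed $t$}: it chooses a bounded fundamental domain $E\subseteq Z_{(t)}$, writes $T$ as a $2\times 2$ matrix $\begin{pmatrix}T_1&T_2\\ T_3&T_4\end{pmatrix}$ relative to $\chi_E H\oplus\chi_{E^c}H$, computes $\mathrm{Tr}(\chi_E T^*T\chi_E)=\mathrm{Tr}(T_1^*T_1+T_3^*T_3)$, and then invokes cyclicity of $\mathrm{Tr}$. You instead establish the single global formula $\mathrm{Tr}_Z^X(T)=\mathrm{Tr}(\chi_E T\chi_E)$ for a fundamental domain $E$ of $G$ on all of $X$ via monotone convergence, and then prove the trace property by the Hilbert--Schmidt decomposition $\mathrm{Tr}(\chi_E a^*a\chi_E)=\sum_g\|\chi_{gE}a\chi_E\|_{\mathrm{HS}}^2$ together with the $G$-equivariance identity $\|\chi_{gE}a\chi_E\|_{\mathrm{HS}}=\|\chi_Ea\chi_{g^{-1}E}\|_{\mathrm{HS}}$. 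Your version has a real advantage: you make the role of $G$-equivariance explicit, and by working with a global $E$ you avoid the ``cross'' corner $\|\chi_{Z_{(t)}^c}T\chi_E\|_{\mathrm{HS}}\neq\|\chi_E T\chi_{Z_{(t)}^c}\|_{\mathrm{HS}}$, which is present for fixed finite $t$ and only disappears in the limit; the paper's per-$t$ argument elides this point and also the equality $\mathrm{Tr}(T_2T_2^*)=\mathrm{Tr}(T_3T_3^*)$ that actually requires the equivariance you use.

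There is, however, one genuine gap in your write-up. You assert that for an arbitrary Borel fundamental domain $E$ of $G$ on $X$, each truncation $E_t=E\cap Z_{(t)}$ is automatically bounded ``because $G$ acts on $Z_{(t)}$ cocompactly.'' This does not follow: a Borel fundamental domain for a cocompact proper action need not be bounded (already for $\mathbb{Z}\curvearrowright\mathbb{R}$ one can build an unbounded fundamental domain from a sequence of translated intervals of shrinking lengths), and intersecting an unbounded $E$ with $Z_{(t)}$ need not yield a bounded set. You must instead \emph{build} $E$ to have this property: take a bounded fundamental domain $E_1$ for $G$ on $Z_{(1)}$ (Lemma~A.2.9 in \cite{Rufus}), inductively extend it to a bounded fundamental domain $E_{n+1}\supseteq E_n$ for $Z_{(n+1)}$, and set $E=\bigcup_n E_n$. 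Since any two nested fundamental domains for the same space coincide, one gets $E\cap Z_{(n)}=E_n$, so $E\cap Z_{(t)}$ is bounded for all $t$, and $E$ is a fundamental domain for $G$ on $X=\bigcup_n Z_{(n)}$. With this construction in place, the remaining steps of your argument -- monotone convergence to $\mathrm{Tr}(\chi_ET\chi_E)$, positive linearity in $[0,\infty]$-arithmetic, and the equivariant re-indexing -- go through as you describe.
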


\begin{proof}
Just need to prove that ${\text{Tr}^X_Z}(T^*T)={\text{Tr}^X_Z}(T T^*)$ for every $T\in C^*_{ Z}( M,  H)^G$.

Take a $T\in C^*_{ Z}( M,  H)^G$ and $t>0$. Suppose $E\subseteq {Z_{(t)}}$ is a bounded fundamental domain of $Z_{(t)}$. Decompose $T$ according to $\chi_E H\oplus \chi_{E^c} H$ as
\[
T=\left(\begin{array}{cc} T_1 & T_2 \\ T_3 & T_4\end{array}\right).
\]
Then
\begin{align*}
 \mathrm{Tr}_{
 Z_{(t)}}\left(\chi_{Z_{(t)}} T^*T \chi_{Z_{(t)}}\right)
=\text{Tr}(\chi_ET^*T\chi_E)
=\text{Tr}(T_1^*T_1+T_3^*T_3),\\
\mathrm{Tr}_{Z_{(t)}}\left(\chi_{Z_{(t)}} TT^* \chi_{Z_{(t)}}\right)
=\text{Tr}(\chi_ETT^*\chi_E)
=\text{Tr}(T_1T_1^*+T_3T_3^*),
\end{align*}
and
\[
\text{Tr}(T_1^*T_1+T_3^*T_3)=\text{Tr}(T_1T_1^*+T_3T_3^*).
\]
So
\[
\mathrm{Tr}_{Z_{(t)}}\left(\chi_{Z_{(t)}} T^*T \chi_{Z_{(t)}}\right)=\mathrm{Tr}_{Z_{(t)}}\left(\chi_{Z_{(t)}} TT^* \chi_{Z_{(t)}}\right),
\]
and
\[
{\text{Tr}^X_Z}(T^*T)={\text{Tr}^X_Z}(T T^*).
\]
\end{proof}

\begin{lem}
The positive trace $\mathrm{Tr}_Z^X$ on $C^\ast_{Z}( X,H)^G$ is lower semicontinuous.
\end{lem}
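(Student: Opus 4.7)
The plan is to reduce lower semicontinuity of $\mathrm{Tr}_Z^X$ to lower semicontinuity of the traces $\mathrm{Tr}_{Z_{(t)}}$ already established in Example \ref{TrX}, then pass to the limit in $t$. Concretely, suppose $T_n\to T$ in norm in $C^*_Z(X,H)^G_+$; I want to show that
\[
\mathrm{Tr}_Z^X(T)\leq \liminf_{n\to\infty}\mathrm{Tr}_Z^X(T_n).
\]

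First I would check that for each $t>0$ the compression $\chi_{Z_{(t)}}S\chi_{Z_{(t)}}$ of an element $S\in C^*_Z(X,H)^G$ lies in $C^*(Z_{(t)},\chi_{Z_{(t)}}H)^G$. This is because finite propagation, local compactness, and $G$-equivariance are all preserved by left/right multiplication with $\chi_{Z_{(t)}}$, and the cut–down commutes with norm closure. In particular $\chi_{Z_{(t)}}T_n\chi_{Z_{(t)}}\to\chi_{Z_{(t)}}T\chi_{Z_{(t)}}$ in norm inside $C^*(Z_{(t)},\chi_{Z_{(t)}}H)^G_+$.

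Next, since $G$ acts properly, cocompactly, and freely by isometries on $Z_{(t)}$ and $\chi_{Z_{(t)}}H$ is a locally free geometric $Z_{(t)}$-$G$ module, Example \ref{TrX} furnishes the densely defined, lower semicontinuous trace $\mathrm{Tr}_{Z_{(t)}}$. Applying its lower semicontinuity at the compressed sequence yields, for every fixed $t>0$,
\[
\mathrm{Tr}_{Z_{(t)}}\!\left(\chi_{Z_{(t)}}T\chi_{Z_{(t)}}\right)\leq \liminf_{n\to\infty}\mathrm{Tr}_{Z_{(t)}}\!\left(\chi_{Z_{(t)}}T_n\chi_{Z_{(t)}}\right)\leq \liminf_{n\to\infty}\mathrm{Tr}_Z^X(T_n),
\]
where the second inequality uses that $\mathrm{Tr}_{Z_{(t)}}\!\left(\chi_{Z_{(t)}}T_n\chi_{Z_{(t)}}\right)\leq \mathrm{Tr}_Z^X(T_n)$ by definition of $\mathrm{Tr}_Z^X$ as a supremum over $t$.

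Finally, since the expression $\mathrm{Tr}_{Z_{(t)}}\!\left(\chi_{Z_{(t)}}T\chi_{Z_{(t)}}\right)$ is monotonically increasing in $t$ with limit $\mathrm{Tr}_Z^X(T)$, letting $t\to\infty$ on the left-hand side of the displayed inequality gives the desired
\[
\mathrm{Tr}_Z^X(T)\leq \liminf_{n\to\infty}\mathrm{Tr}_Z^X(T_n).
\]
The only mildly delicate point is verifying that the compressions land in the correct subalgebra so that $\mathrm{Tr}_{Z_{(t)}}$ may be applied; once that is settled the argument is essentially a standard Fatou-type interchange of $\liminf_n$ and $\sup_t$.
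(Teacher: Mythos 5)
Your proof is correct and follows essentially the same route as the paper: reduce to lower semicontinuity of $\mathrm{Tr}_{Z_{(t)}}$ on the compressions and then pass to the limit in $t$. The only cosmetic difference is that the paper handles the cases $\mathrm{Tr}_Z^X(T)<\infty$ and $\mathrm{Tr}_Z^X(T)=\infty$ separately with an $\varepsilon$/$\delta$ argument, whereas you avoid the case split by fixing $t$, proving the uniform inequality $\mathrm{Tr}_{Z_{(t)}}(\chi_{Z_{(t)}}T\chi_{Z_{(t)}})\le\liminf_n\mathrm{Tr}_Z^X(T_n)$, and then taking $t\to\infty$ at the end; this is the same estimate packaged more cleanly.
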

\begin{proof}
Let $(T_n)_{n=1}^\infty$ be a norm convergent sequence in $C^\ast_{Z}(X,H)^G_+$ and the limit is denoted by $T$.

If $\mathrm{Tr}_Z^X(T)<\infty$, then for each $\varepsilon>0$, there exists some $t$ such that
\begin{align*}
\mathrm{Tr}_Z^X(T)
 &\le
\mathrm{Tr}_{Z_{(t)}}(\chi_{Z_{(t)}} T\chi_{Z_{(t)}})+\varepsilon &
 \\
&\le\varliminf_{m\to\infty}\mathrm{Tr}_{ Z_{(t)}}(\chi_{Z_{(t)}} T_m\chi_{Z_{(t)}})+\varepsilon \\
  & \le
\varliminf_{m\to\infty}\mathrm{Tr}_Z^X(T_m)+\varepsilon,
\end{align*}
since $\chi_{Z_{(t)}}T_m\chi_{Z_{(t)}}$ is convergent to $\chi_{Z_{(t)}}T\chi_{Z_{(t)}}$ in the norm topology as $m\rightarrow \infty$ and $\text{Tr}_{Z_{(t)}}$ is lower semicontinuous. As $\varepsilon\to 0^+$, we get the desired inequality.\par

If $\mathrm{Tr}_Z^X(T)=\infty$, then for any $\delta>0$, there exists some $t$ such that
\begin{align*}
\delta
 & \le
\mathrm{Tr}_{Z_{(t)}}(\chi_{Z_{(t)}} T\chi_{Z_{(t)}})\\
 & \le
\varliminf_{m\to\infty}\mathrm{Tr}_{Z_{(t)}}(\chi_{Z_{(t)}} T_m\chi_{Z_{(t)}})\\
 &   \le
\varliminf_{m\to\infty}\mathrm{Tr}_Z^X(T_m),
\end{align*}
As $\delta\to +\infty$, we get the desired result.
\end{proof}
\begin{lem}
The positive trace $\mathrm{Tr}_Z^X$ on $C^\ast_{Z}( X, H)^G$ is densely defined.
\end{lem}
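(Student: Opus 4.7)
The plan is to exhibit an explicit dense subset of $C^{\ast}_Z(X,H)^G_+$ on which $\mathrm{Tr}_Z^X$ is finite, by reducing to the already-established density for each cocompact piece $Z_{(n)}$.

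First I would verify the basic compatibility: for every $n\in\mathbb{N}$ and every positive $a\in C^{\ast}(Z_{(n)},\chi_{Z_{(n)}}H)^G$, viewed as an element of $C^{\ast}_Z(X,H)^G$ by extension by zero, one has
\[
\mathrm{Tr}_Z^X(a)=\mathrm{Tr}_{Z_{(n)}}(a).
\]
Indeed, for any $t\geq n$ we have $\chi_{Z_{(t)}}a\chi_{Z_{(t)}}=a$, and the commutative diagram stated just before the definition of $\mathrm{Tr}_Z^X$ gives $\mathrm{Tr}_{Z_{(t)}}(a)=\mathrm{Tr}_{Z_{(n)}}(a)$; passing to the monotone limit in $t$ yields the claim.

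Next I would invoke Example \ref{TrX}: since $G$ acts on each $Z_{(n)}$ properly, cocompactly and by isometries, and $\chi_{Z_{(n)}}H$ is a locally free geometric $Z_{(n)}$-$G$ module, the canonical trace $\mathrm{Tr}_{Z_{(n)}}$ is a densely defined positive trace on $C^{\ast}(Z_{(n)},\chi_{Z_{(n)}}H)^G$. Hence the set
\[
\mathcal{D}_n=\Bigl\{a\in C^{\ast}(Z_{(n)},\chi_{Z_{(n)}}H)^G_+\ :\ \mathrm{Tr}_{Z_{(n)}}(a)<\infty\Bigr\}
\]
is norm-dense in $C^{\ast}(Z_{(n)},\chi_{Z_{(n)}}H)^G_+$. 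By the first step, $\mathcal{D}_n\subseteq\{b\in C^{\ast}_Z(X,H)^G_+:\mathrm{Tr}_Z^X(b)<\infty\}$ for every $n$, so it suffices to prove that $\bigcup_{n=1}^\infty\mathcal{D}_n$ is norm-dense in $C^{\ast}_Z(X,H)^G_+$.

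Fix $b\in C^{\ast}_Z(X,H)^G_+$ and $\varepsilon>0$. By the identification $C^{\ast}_Z(X,H)^G=\overline{\bigcup_n C^{\ast}(Z_{(n)},\chi_{Z_{(n)}}H)^G}$ stated at the start of the subsection, choose $c\in C^{\ast}(Z_{(n)},\chi_{Z_{(n)}}H)^G$ with $\|b-c\|<\varepsilon/3$; since $b$ is self-adjoint, replacing $c$ by $(c+c^{\ast})/2$ we may assume $c=c^{\ast}$. Applying continuous functional calculus to $c$ inside $C^{\ast}(Z_{(n)},\chi_{Z_{(n)}}H)^G$ and using the standard inequality $\|b-c_+\|\le\|b-c\|$ for positive $b$, we obtain a positive $c_+\in C^{\ast}(Z_{(n)},\chi_{Z_{(n)}}H)^G_+$ with $\|b-c_+\|<\varepsilon/3$. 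Finally, density of $\mathcal{D}_n$ in $C^{\ast}(Z_{(n)},\chi_{Z_{(n)}}H)^G_+$ supplies $a\in\mathcal{D}_n$ with $\|c_+-a\|<\varepsilon/3$, whence $\|b-a\|<\varepsilon$.

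The main (minor) obstacle is the positivity-preserving approximation step in the last paragraph: one has to promote a norm-approximation by elements of $\bigcup_n C^{\ast}(Z_{(n)},\chi_{Z_{(n)}}H)^G$ to a norm-approximation by positive such elements. This is dealt with by the standard self-adjoint/positive-part trick from continuous functional calculus, staying inside a single $C^{\ast}(Z_{(n)},\chi_{Z_{(n)}}H)^G$ so that the subsequent application of density of $\mathcal{D}_n$ is legitimate.
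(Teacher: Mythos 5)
The overall strategy is sound: you correctly observe that $\mathrm{Tr}_Z^X$ restricts to $\mathrm{Tr}_{Z_{(n)}}$ on each $C^{\ast}(Z_{(n)},\chi_{Z_{(n)}}H)^G$ (the paper states this right after the definition), you correctly invoke Example 5.3 to get density of the finite-trace positive elements $\mathcal{D}_n$ in each piece, and the reduction to showing $\bigcup_n\mathcal{D}_n$ is dense in $C^{\ast}_Z(X,H)^G_+$ is the right skeleton. The paper itself gives no proof of this lemma, so there is no ``official'' argument to compare against.

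However, there is a genuine gap in the last step: the ``standard inequality'' $\|b-c_+\|\le\|b-c\|$ for positive $b$ and self-adjoint $c$ is \emph{false}. The function $t\mapsto\max(t,0)$ is not operator monotone, and a $2\times 2$ counterexample already exists: take
\[
b=\begin{pmatrix}1 & 1/\sqrt{2}\\ 1/\sqrt{2} & 1/2\end{pmatrix}\ge 0,
\qquad
c=\begin{pmatrix}2 & 0\\ 0 & -2/5\end{pmatrix},
\qquad
c_+=\begin{pmatrix}2 & 0\\ 0 & 0\end{pmatrix}.
\]
One computes $\|b-c\|=\tfrac{1}{20}\bigl(1+\sqrt{561}\bigr)\approx 1.234$ while $\|b-c_+\|=\tfrac14\bigl(1+\sqrt{17}\bigr)\approx 1.281$, so $\|b-c_+\|>\|b-c\|$. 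Thus as written the positivity-preserving approximation step does not go through.

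The argument is easily repaired, in either of two standard ways, staying inside a single $C^{\ast}(Z_{(n)},\chi_{Z_{(n)}}H)^G$. First, a factor-of-two version of your inequality does hold when $b\ge 0$: since $c=b-(b-c)\ge -\|b-c\|$, the negative part satisfies $\|c_-\|\le\|b-c\|$, whence $\|b-c_+\|=\|(b-c)-c_-\|\le 2\|b-c\|$, which is all the density argument needs. Alternatively, avoid the positive part altogether: $\sqrt{b}\in C^{\ast}_Z(X,H)^G$ by functional calculus, so one can pick $e$ in the dense $*$-subalgebra $\bigcup_n C^{\ast}(Z_{(n)},\chi_{Z_{(n)}}H)^G$ with $\|e-\sqrt{b}\|$ small, and then $e^{\ast}e$ is a positive element of some $C^{\ast}(Z_{(n)},\chi_{Z_{(n)}}H)^G$ with $\|e^{\ast}e-b\|\le\|e-\sqrt{b}\|\bigl(\|e\|+\|\sqrt{b}\|\bigr)$ small. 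Either fix completes your proof.
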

\begin{lem}\label{Lemma:TraceCommuting}The formula
 $$\mathrm{Tr}_Z^X(T_1T_2)=\mathrm{Tr}_Z^X(T_2T_1),$$
 holds if $T_1\in C^\ast_{Z}(X, H)^G$ is a trace-class element about
 $\text{Tr}_Z^X$ and $T_2\in C^*(X,H)^G$.
\end{lem}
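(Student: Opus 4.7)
The plan is to derive this identity as a direct application of Lemma~\ref{traceformulaextension} to the pair of $C^*$-algebras $C = C^*_Z(X,H)^G \subseteq C' = C^*(X,H)^G$ and the positive trace $\tau = \mathrm{Tr}_Z^X$. It has already been recorded in Section~2.1 that $C^*_Z(X,H)^G$ is a closed two-sided ideal in $C^*(X,H)^G$, and the three preceding lemmas establish that $\mathrm{Tr}_Z^X$ is a densely defined, lower semicontinuous positive trace on $C^*_Z(X,H)^G$. Thus the hypotheses of Lemma~\ref{traceformulaextension} are met with our choice of $(C,C',\tau)$, and the conclusion of that lemma is precisely the desired identity.

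For transparency I would briefly unfold the reduction internal to Lemma~\ref{traceformulaextension}. Writing $T_1 = T_1^* T_1$-\,type decompositions (or simply $T_1 = \tfrac{T_1+T_1^*}{2} + i\tfrac{T_1-T_1^*}{2i}$ followed by Jordan decomposition into positive parts), linearity reduces the problem to the case $T_1 \ge 0$. Then $\sqrt{T_1}$ is a Hilbert--Schmidt element about $\mathrm{Tr}_Z^X$, and because $C^*_Z(X,H)^G$ is an ideal in $C^*(X,H)^G$ we have $\sqrt{T_1}\,T_2,\ T_2\sqrt{T_1}\in C^*_Z(X,H)^G$; the elementary estimate $(\sqrt{T_1}T_2)^*(\sqrt{T_1}T_2) \le \|T_2\|^2\, T_1$ moreover ensures these products are themselves Hilbert--Schmidt about $\mathrm{Tr}_Z^X$. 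Applying the trace-commutation property for Hilbert--Schmidt pairs (which is purely algebraic once both factors lie in $\sqrt{(C^*_Z)_{\mathrm{Tr}_Z^X}}$) twice then yields
\[
\begin{aligned}
\mathrm{Tr}_Z^X(T_1 T_2) &= \mathrm{Tr}_Z^X\bigl(\sqrt{T_1}\cdot \sqrt{T_1}\,T_2\bigr) = \mathrm{Tr}_Z^X\bigl(\sqrt{T_1}\,T_2\cdot \sqrt{T_1}\bigr) \\
&= \mathrm{Tr}_Z^X\bigl(T_2\sqrt{T_1}\cdot \sqrt{T_1}\bigr) = \mathrm{Tr}_Z^X(T_2 T_1).
\end{aligned}
\]

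The only conceptual subtlety — and the whole reason Lemma~\ref{traceformulaextension} was isolated earlier — is that $T_2$ need not lie in the domain $C^*_Z(X,H)^G$ on which $\mathrm{Tr}_Z^X$ is defined, so one cannot invoke the bare trace property $\tau(AB)=\tau(BA)$ directly on the pair $(T_1,T_2)$. The ideal structure is used to split $T_1$ as $\sqrt{T_1}\cdot\sqrt{T_1}$ and to absorb $T_2$ into a Hilbert--Schmidt factor that does lie in the trace domain. Since this machinery is already in place, no further obstacle arises; the present lemma is essentially just the instantiation of Lemma~\ref{traceformulaextension} for the localized equivariant Roe algebra pair.
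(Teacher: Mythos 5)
Your proof is correct and follows exactly the same route as the paper: the paper's proof is a one-line invocation of Lemma~\ref{traceformulaextension} applied to the ideal inclusion $C^*_Z(X,H)^G \subseteq C^*(X,H)^G$ with $\tau = \mathrm{Tr}_Z^X$. The extra unfolding you provide simply reproduces the internal argument of Lemma~\ref{traceformulaextension}, which is harmless but not needed once that lemma is cited.
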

\begin{proof}
Since $C^\ast_{Z}(X, H)^G$ is a closed ideal in $C^*(X,H)^G$, this is a consequence of Lemma \ref{traceformulaextension}.
\end{proof}

By now, there is a group homomorphism
\[
\left({\text{Tr}^X_Z}\right)_*:K_0(C^\ast_{Z}( X, H)^G)\longrightarrow \mathbb{R}.
\]

For simplicity, we denote the double of $C^*(X,H)^G$ along $C^*_{ Z}(X,H)^G$ by
\[
I(Z,X,H)^G.
\]
We have known that there is
\[
K_*\left(I(Z,X,H)^G\right)\cong K_*\left(C^*_{ Z}(X,H)^G\right)\oplus K_*\left(C^*(X,H)^G\right).
\]
Extend the group homomorphism
\[
\left({\text{Tr}^X_Z}\right)_*:K_0\left(C^\ast_{Z}( X, H)^G\right)\longrightarrow \mathbb{R},
\]
to $K_0\left(I(Z,X,H)^G\right)$ by zero homomorphism on $K_0\left(C^*(X,H)^G\right)$, then we get
\[
\left( {\text{Tr} }^X_Z\right)_*\oplus 0:K_0\left(I(Z,X,H)^G \right)\longrightarrow \mathbb{R}.
\]
The following lemma gives an explicit formula of $\left( {\text{Tr} }^X_Z\right)_*\oplus 0$ for some special cases.

\begin{lem}\label{Lemma:DoubleTraceComputation}
For every $x\in K_0\left(I(Z,X,H)^G \right)$, there exist idempotents $(e,f)$ and $(e',f')$ in $M_{n\times n}(I(Z,X,H)^{G+})$ for some $n$ satisfying
\[
(e,f)-(e',f')\in M_{n\times n}(I(Z,X,H)^G),
\]
and
\[
x=[(e,f)]-[(e',f')]\in K_0\left(I(Z,X,H)^G \right).
\]
Moreover, if the entries of $e-f$ and $e'-f'$ are trace-class elements about $\text{Tr}_Z^X$, then the following formula holds
\[
\left( {\text{Tr} }^X_Z\right)_*\oplus 0:[(e,f)]-[(e',f')]\longmapsto \sum_{i=1}^n\mathrm{Tr}_Z^X(e_{ii}-f_{ii})-\sum_{i=1}^n\mathrm{Tr}_Z^X(e'_{ii}-f'_{ii}).
\]
\end{lem}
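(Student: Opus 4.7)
The first assertion is the standard description of $K_0$ for a non-unital $C^*$-algebra $A$: every element of $K_0(A)$ equals $[p]-[q]$ for idempotents $p,q\in M_n(A^+)$ with identical images in $M_n(\mathbb{C})$, which is equivalent to $p-q\in M_n(A)$. Specialized to $A=I(Z,X,H)^G$, this yields the required representatives $(e,f)$ and $(e',f')$.

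For the trace formula, my plan is to pass to the first summand of the $K$-theory splitting via Proposition \ref{Lemma:DifferenceConstruction} and then reduce to a direct matrix-trace computation using the cyclic property from Lemma \ref{Lemma:TraceCommuting}. Applied with $C=C^*(X,H)^G$ and $I=C^*_Z(X,H)^G$, Proposition \ref{Lemma:DifferenceConstruction} identifies the $K_0(C^*_Z(X,H)^G)$-component of $[(e,f)]-[(e',f')]$ with $[\mathcal{E}(e,f)]-[\mathcal{E}(e',f')]$. Since $(\mathrm{Tr}_Z^X)_*\oplus 0$ is by construction zero on $K_0(C^*(X,H)^G)$, the entire problem collapses to evaluating $(\mathrm{Tr}_Z^X)_*\bigl([\mathcal{E}(e,f)]-[\mathcal{E}(e',f')]\bigr)$.

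Both $\mathcal{E}(e,f)$ and $\mathcal{E}(e',f')$ lie in $M_{4n}((C^*_Z(X,H)^G)^+)$ and differ from the same scalar matrix $\mathcal{E}_n$ by elements of $M_{4n}(C^*_Z(X,H)^G)$, so they have matching scalar parts. Moreover, the explicit block form
\[
\mathcal{E}(e,f)-\mathcal{E}_n=\begin{pmatrix} f(e-f)f & 0 & fe(e-f) & 0 \\ 0 & 0 & 0 & 0 \\ (e-f)ef & 0 & (1-f)(e-f)(1-f) & 0 \\ 0 & 0 & 0 & 0 \end{pmatrix}
\]
shows that every entry of $\mathcal{E}(e,f)-\mathcal{E}_n$ factors through $e-f$, so by Lemma \ref{traceformulaextension} (the trace-class elements form a two-sided ideal in $C^*(X,H)^G$) all entries of $\mathcal{E}(e,f)-\mathcal{E}_n$ are $\mathrm{Tr}_Z^X$-trace-class; the same applies to $\mathcal{E}(e',f')-\mathcal{E}_n$. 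The standard formula for $\tau_*$ on differences of idempotents with common scalar part then gives
\[
(\mathrm{Tr}_Z^X)_*\bigl([\mathcal{E}(e,f)]-[\mathcal{E}(e',f')]\bigr)=(\mathrm{Tr}_Z^X)^{4n}\bigl(\mathcal{E}(e,f)-\mathcal{E}(e',f')\bigr),
\]
where $(\mathrm{Tr}_Z^X)^{4n}$ is the induced matrix trace.

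Summing the two nonzero diagonal blocks, the right-hand side equals $(\mathrm{Tr}_Z^X)^n\bigl(f(e-f)f\bigr)+(\mathrm{Tr}_Z^X)^n\bigl((1-f)(e-f)(1-f)\bigr)$ minus the analogous expression for $(e',f')$. Applying the cyclic property (Lemma \ref{Lemma:TraceCommuting}, with $e-f$ playing the role of the trace-class factor and $f$, $1-f$ as bounded multipliers) together with $f^2=f$ and $(1-f)^2=1-f$, the first two terms collapse to $(\mathrm{Tr}_Z^X)^n\bigl((e-f)(f+(1-f))\bigr)=\sum_{i=1}^n\mathrm{Tr}_Z^X(e_{ii}-f_{ii})$. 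Subtracting the identical calculation for $(e',f')$ yields the asserted formula. The main obstacle throughout is really just bookkeeping — verifying that the trace-class hypotheses carry through each cyclic permutation so that Lemma \ref{Lemma:TraceCommuting} applies at every step; once that is set up, the computation is essentially forced by the structure of $\mathcal{E}(e,f)$.
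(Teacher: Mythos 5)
Your proof is correct and takes essentially the same route as the paper's: reduce via Proposition~\ref{Lemma:DifferenceConstruction} to the class $[\mathcal{E}(e,f)]-[\mathcal{E}(e',f')]$ in $K_0(C^*_Z(X,H)^G)$, apply the definition of $(\mathrm{Tr}_Z^X)_*$ to get $(\mathrm{Tr}_Z^X)^{4n}(\mathcal{E}(e,f)-\mathcal{E}(e',f'))$, and collapse the diagonal blocks using the cyclic property from Lemma~\ref{Lemma:TraceCommuting} together with $f^2=f$. You are a bit more explicit than the paper in checking that the entries of $\mathcal{E}(e,f)-\mathcal{E}_n$ inherit the trace-class condition through the ideal property (Lemma~\ref{traceformulaextension}), which the paper leaves implicit.
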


\begin{proof}
	By Lemma \ref{Lemma:TraceCommuting}, we compute that
        \begin{align*}
        &\left( {\text{Tr} }^X_Z\right)_*\oplus 0\left([(e,f)]-[(e',f')]\right)  \\
        =&\left( {\text{Tr} }^X_Z\right)_*\left([\mathcal{E}(e,f)]-[\mathcal{E}(e',f')]\right)\\
        =&\left(\mathrm{Tr}_Z^X\right)^{4n}(\mathcal{E}(e,f)-\mathcal{E}(e',f'))\\
        =&\left(\mathrm{Tr}_Z^X\right)^n(f(e-f)f-f'(e'-f')f')\\
        +&\left(\mathrm{Tr}_Z^X\right)^n((1-f)(e-f)(1-f)-(1-f')(e'-f')(1-f')) \\
        =&\left(\mathrm{Tr}_Z^X\right)^n(f(e-f)f)-\left(\mathrm{Tr}_Z^X\right)^n(f'(e'-f')f')\\
        +&\left(\mathrm{Tr}_Z^X\right)^n((1-f)(e-f)(1-f))-\left(\mathrm{Tr}_Z^X\right)^n((1-f')(e'-f')(1-f')) \\
        =&\left(\mathrm{Tr}_Z^X\right)^n((e-f)f)-\left(\mathrm{Tr}_Z^X\right)^n((e'-f')f')\\
        +&\left(\mathrm{Tr}_Z^X\right)^n((e-f)(1-f))-\left(\mathrm{Tr}_Z^X\right)^n((e'-f')(1-f'))\\
        =&\left(\mathrm{Tr}_Z^X\right)^n(e-f)-\left(\mathrm{Tr}_Z^X\right)^n(e'-f').
        \end{align*}
    \end{proof}

\begin{prop}
If a countable discrete group $G$ acting on proper metric spaces $X$ and $X'$ properly by isometries, $Z\subseteq X$ and $Z'\subseteq X'$ are cocompact $G$-invariant closed subsets, $f:X\longrightarrow X'$ is an equivariant coarse map satisfying $f(Z)\subseteq Z'$, then the following diagram is commutative
\[
\xymatrix{
  K_0(C^*_{Z}(X,H)^G) \ar[rr]^{f_*} \ar[dr]_{(\text{Tr}_Z^X)_*}
                &  &    K_0(C^*_{Z'}(X',H')^G) \ar[dl]^{\ \ (\text{Tr}_{Z'}^{X'})_*}    \\
                & \mathbb{R}                 },
\]
for locally free geometric modules $H$ and $H'$.
\end{prop}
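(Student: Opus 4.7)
The plan is to reduce to the cocompact case already handled by Proposition \ref{TrXnaturalproperty}. Since $f$ is an equivariant coarse map with $f(Z)\subseteq Z'$, for each $t>0$ there is some $r(t)>0$ with $f(Z_{(t)})\subseteq Z'_{(r(t))}$: indeed, if $x\in Z_{(t)}$ then $d(x,z)<t$ for some $z\in Z$, and the coarse condition on $f$ forces $d(f(x),f(z))<r(t)$ with $f(z)\in Z'$. Fix an equivariant isometric $\varepsilon$-cover $P\colon H\to H'$ of $f$, which implements $f_*$ via $\mathrm{ad}_P$. The support condition $\{(x',x):d(x',f(x))\le\varepsilon\}$ on $P$, combined with the inclusion above, forces $P$ to restrict to an equivariant isometric $\varepsilon$-cover of the coarse map $f|_{Z_{(t)}}\colon Z_{(t)}\to Z'_{(r(t)+\varepsilon)}$. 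Consequently, $\mathrm{ad}_P$ restricts to a $*$-homomorphism
\[
\mathrm{ad}_P\colon C^*(Z_{(t)},\chi_{Z_{(t)}}H)^G\longrightarrow C^*(Z'_{(r(t)+\varepsilon)},\chi_{Z'_{(r(t)+\varepsilon)}}H')^G,
\]
compatible with the inclusions that define the direct systems whose closures are $C^*_Z(X,H)^G$ and $C^*_{Z'}(X',H')^G$.

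Next, both $Z_{(t)}$ and $Z'_{(r(t)+\varepsilon)}$ carry cocompact, proper, isometric $G$-actions, and $\chi_{Z_{(t)}}H$, $\chi_{Z'_{(r(t)+\varepsilon)}}H'$ are locally free geometric modules over them. Proposition \ref{TrXnaturalproperty} applied to $f|_{Z_{(t)}}$ therefore gives
\[
(\mathrm{Tr}_{Z'_{(r(t)+\varepsilon)}})_*\circ(f|_{Z_{(t)}})_*=(\mathrm{Tr}_{Z_{(t)}})_*
\]
on $K_0(C^*(Z_{(t)},\chi_{Z_{(t)}}H)^G)$. By the construction of $\mathrm{Tr}_Z^X$, the trace $\mathrm{Tr}_Z^X$ restricts to $\mathrm{Tr}_{Z_{(t)}}$ on $C^*(Z_{(t)},\chi_{Z_{(t)}}H)^G$, and analogously for $\mathrm{Tr}_{Z'}^{X'}$, while the same cover $P$ induces both $f_*$ and $(f|_{Z_{(t)}})_*$ on the relevant subalgebras. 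This yields
\[
(\mathrm{Tr}_{Z'}^{X'})_*\circ f_*=(\mathrm{Tr}_Z^X)_*
\]
on the image of $K_0(C^*(Z_{(t)},\chi_{Z_{(t)}}H)^G)$ inside $K_0(C^*_Z(X,H)^G)$.

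Finally, since $C^*_Z(X,H)^G=\overline{\bigcup_n C^*(Z_{(n)},\chi_{Z_{(n)}}H)^G}$ is the closure of an increasing union of $C^*$-subalgebras, $K_0$ commutes with the direct limit, so every element of $K_0(C^*_Z(X,H)^G)$ comes from $K_0(C^*(Z_{(t)},\chi_{Z_{(t)}}H)^G)$ for some sufficiently large $t$. The identity established above therefore propagates to all of $K_0(C^*_Z(X,H)^G)$, giving the commutativity of the diagram.

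The main obstacle I expect is the bookkeeping for the cover $P$: one must verify carefully that the support bound on $P$, together with $f(Z_{(t)})\subseteq Z'_{(r(t))}$, genuinely produces $P\chi_{Z_{(t)}}H\subseteq \chi_{Z'_{(r(t)+\varepsilon)}}H'$, and that the restricted $P$ remains an equivariant isometric cover in the precise sense needed to invoke Proposition \ref{TrXnaturalproperty} (in particular, that the restricted geometric modules are still locally free). Once this bridging step is in place, the rest is a direct-limit argument combined with the already proved cocompact naturality.
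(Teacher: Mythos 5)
Your proposal is correct and matches the paper's approach exactly: the paper's entire proof is the one-liner ``By Proposition~\ref{TrXnaturalproperty} and the properties of inductive limits,'' and your write-up is a careful unpacking of precisely those two ingredients (the support estimate $P\chi_{Z_{(t)}}H\subseteq\chi_{Z'_{(r(t)+\varepsilon)}}H'$, the stage-wise application of the cocompact case, and the continuity of $K_0$ under the inductive limit $C^*_Z(X,H)^G=\overline{\bigcup_n C^*(Z_{(n)},\chi_{Z_{(n)}}H)^G}$). The ``main obstacle'' you flag does go through: $\chi_{Z_{(t)}}H$ inherits local freeness from $H$ since any $F$-invariant Borel $E\subseteq Z_{(t)}$ is also an $F$-invariant Borel subset of $X$.
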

\begin{proof}
By Proposition \ref{TrXnaturalproperty} and the properties of inductive limits.
\end{proof}

In order to define relative $L^2$-index in the next subsection, we introduce some basic facts about the regular $G$-cover.

Let $M$ be a complete Riemannian manifold equipped with a Dirac bundle $S$ and a Dirac operator $D$. A covering map $\pi:M^\#\longrightarrow M$ is a regular cover if $\pi_*(\Pi_1(M^\#))$ is a normal subgroup of $\Pi_1(M)$ where $\Pi_1(M^\#)$ and $\Pi_1(M)$ are the fundamental group of $M^\#$ and $M$ respectively. In this case, $M^\#$ also forms a complete Riemannian manifold if equipped with the pullback structures, such as the Riemannian metric, the Dirac bundle $S^\#$, and the Dirac operator $D^\#$. If a countable discrete group $G$ acts on $M^\#$ properly and freely by isometries and 
$M^\#/G$ is homeomorphic to $M$, then $\pi$ is called a regular $G$-cover.

Assume that $\pi: M^\#\longrightarrow M$ is a regular $G$-cover. Then the trivial group $\{\mathbb{1}\}$ acts on $M$ properly and freely by isometries. Take $H^\#=L^2(M^\#,S^\#)$ and $H=L^2(M,S)$ as the geometric modules which must be locally free. Denote by $E^\#$ the set $\pi^{-1}(E)$ for every $E\subseteq M$. Let $Z$ be a compact subset of $M$ then there is $$\pi^{-1}(B(Z,t))=B(\pi^{-1}(Z),t),\ \ \forall t>0,$$
which is written as $Z_{(t)}^\#$. Since $G$ acts on $Z_{(t)}^\#$ cocompactly and $\{\mathbb{1}\}$ acts on $Z_{(t)}$ cocompactly for every $t>0$, there are densely defined, lower semicontinuous positive trace $\text{Tr}_{Z^\#}^{M^\#}$ on $C^*_{ Z^\#}(M^\#, H^\#)^G$ and $\text{Tr}_{Z}^{M}$ on $C^*_{ Z}(M, H)$.

\subsection{Definition of Relative $L^2$-Index and its Computation}

\text{ }

 In this subsection, we define the relative $L^2$-index and give a calculation formula on certain conditions.

 \begin{defn}
 For a set of relative equivariant coarse index data $(M_i,S_i,D_i,Y_i,h,q)$ over $G$ where $M_i$ are even dimensional and $G$ acts on $Y_i$ cocompactly,
 the relative $L^2$-index is defined by
 \[
 L^2\text{-Index}_r^G(D_1,D_2)=\left(\text{Tr}_{Y_1}\right)_*\left(\text{Index}_r^G(D_1,D_2)^G\right)\in \mathbb{R}.
 \]
 \end{defn}

Now we investigate how to compute the relative $L^2$-index under the conditions that $G$ acts on $M_i$ freely. Recall that we fix a positive real number $s$ and define $Z_i=B(Y_i,s)$ to get the relative equivariant coarse index. Let
$$h_s: M_1-O(Y_1,10s)\longrightarrow M_2-O(Y_2,10s),$$
and define $X=M_1\bigcup_{h_s}M_2$ which is the quotient space of $M_1\coprod M_2$ by identifying $x$ with $h_s(x)$ for every $x\in M_1-O(Y_1,10s)$. Then $M_i$ are homeomorphic to their image $X$ by the natural inclusion $M_i\longrightarrow X$ and we regard $M_i$ are subspaces of $X$. Similarly, define a bundle $S_X$ on $X$ by identifying $v\in S_{1x}$ and $w\in S_{2y}$ if
\[
h_{s}(x)=y, x\in M_1-O(Y_1,10s), y\in M_2-O(Y_210s), \bar{h}(v)=w.
\]

For $x\in O(Y_1,10s)$ and $y\in O(Y_2,10s)$, define
\[
d(x,y)=\inf\{d(x,z)+d(z,y):z\in M_1\cap M_2\}.
\]
The fact that $M_1\cap M_2$ is closed set guarantees that above definition gives a compatible proper metric on $X$ and $M_i\longrightarrow X$ are isometric. Moreover, $G$ acts on $X$ properly and freely by isometries.

Then $H=L^2(X,S_X)$ is a locally free geometric $X$-$G$ module and the natural inclusion $H_i=L^2(M_i,S_i)\longrightarrow H$ is an equivariant isometric 0-cover for $M_i\longrightarrow X$.

Let $Z=Z_1\bigcup Z_2\subseteq X$. Using the *-homomorphisms
\[
 C^*_{Z_1}(M_1,H_1)^G\longrightarrow C^*_{Z}(X,H)^G, T_1\longrightarrow T_1,
\]
\[
 A\longrightarrow I(Z,X,H)^G, (T_1,T_2)\longrightarrow (T_1,NT_2N^*),
\]
\[
C^*(M_2,H_2)^G\longrightarrow C^*(X,H)^G,  T_2\longrightarrow NT_2N^*,
\]
where $N$ is defined in Section \ref{More Expositions on Relative Equivariant Coarse Index}, there is a commutative diagram
\[
\xymatrix{
   0  \ar[r] & C^*_{Z_1}(M_1,H_1)^G \ar[d] \ar[r] & A \ar[d] \ar[r] & C^*(M_2,H_2)^G \ar[d] \ar[r]  &0 \\
  0 \ar[r] & C^*_{Z}(X,H)^G \ar[r] & I(Z,X,H)^G \ar[r] & C^*(X,H)^G \ar[r] & 0 ,  }
\]
which concludes that
\[
\xymatrix{
   0  \ar[r] & K_0(C^*_{Z_1}(M_1)^G) \ar[d] \ar[r] & K_0(A) \ar[d]^{\Delta} \ar[r] & K_0(C^*(M_2)^G) \ar[d] \ar[r]  &0 \\
  0 \ar[r] & K_0(C^*_{Z}(X)^G) \ar[r] & K_0(I(Z,X,H)^G) \ar[r] & K_0(C^*(X)^G) \ar[r] & 0.   }
\]
Furtherly, the splitting maps of above two diagrams are also commutative. On the other hand, since the following commutative diagram holds
\[
\xymatrix{
  K_0(C^*(Y_1)^G) \ar[d]_{(\text{Tr}_{Y_1})_*} \ar[r]^{\cong} & K_0(C^*_{Z_1}(M_1)^G) \ar[d]_{(\text{Tr}_{Z_1}^{M_1})_*}\ar[r]  & K_0(C^*_Z(X)^G)\ar[d]^{(\text{Tr}_Z^X)_*} \\
  \mathbb{R} \ar[r]^{=} & \mathbb{R} \ar[r]^{=}& \mathbb{R} ,  }
\]
we have
\[
 L^2\text{-Index}_r^G(D_1,D_2)=\Big(\left( {\text{Tr} }^X_Z\right)_*\oplus 0\Big)\circ\Delta(\text{Index}^GD).
\]

Take a normalizing function $\chi$ such that $\text{prop}( \chi(D_i))<s$ and $\chi(D_i)^2-1$ are locally trace-class operators in $B(H_i)^G$. This $\chi$ can be found according to the Corollary 9.6.13 in \cite{Rufus}. Then $\text{Index}^GD\in K_0(A)$ equals to
\[
\left[\left(
\begin{array}{cc}
1-(1-ab)^2 & a(2-ba)(1-ba)\\
(1-ba)b & (1-ba)^2
\end{array}
\right),
\left(
\begin{array}{cc}
1-(1-cd)^2 & c(2-dc)(1-dc)\\
(1-dc)d & (1-dc)^2
\end{array}
\right)\right]
\]
\[
-
\left[\left(
\begin{array}{cc}
1 & 0\\
0 & 0
\end{array}
\right),
\left(
\begin{array}{cc}
1 & 0\\
0 & 0
\end{array}
\right)\right],
\]
where $a=\chi(D_1)_-$, $b=\chi(D_1)_+$, $c=\chi(D_2)_-$ and $d=\chi(D_2)_+$. Consequently, $\Delta(\text{Index}^GD)$ is exactly

{\footnotesize
\[
\left[\left(
\begin{array}{cc}
1-(1-ab)^2 & a(2-ba)(1-ba)\\
(1-ba)b & (1-ba)^2
\end{array}
\right),
\left(
\begin{array}{cc}
1-N(1-cd)^2N^* & Nc(2-dc)(1-dc)N^*\\
N(1-dc)dN^* & N(1-dc)^2N^*
\end{array}
\right)\right]
\]
\[
-
\left[\left(
\begin{array}{cc}
1 & 0\\
0 & 0
\end{array}
\right),
\left(
\begin{array}{cc}
1 & 0\\
0 & 0
\end{array}
\right)\right].
\]
}
As a result, by Lemma \ref{Lemma:DoubleTraceComputation} and combined with the fact that
\[
\left(-(1-ab)^2+N(1-cd)^2N^*\right)\chi_{Z_{1(5s)}^c}=0,
\]
\[
\left((1-ba)^2-N(1-dc)^2N^*\right)\chi_{Z_{1(5s)}^c}=0,
\]
we can get a formula for the relative $L^2$-index:
\begin{align*}
&L^2\text{-Index}_r^G(D_1,D_2) &\\
=&\text{Tr}_Z^X\Big(-(1-ab)^2+N(1-cd)^2N^*\Big)+\text{Tr}_Z^X\Big((1-ba)^2-N(1-dc)^2N^*\Big)&\\
=&\text{Tr}_{Z_{1(5s)}}\Big(\chi_{Z_{1(5s)}}(1-ba)^2\chi_{Z_{1(5s)}}\Big)
+\text{Tr}_{Z_{1(5s)}}\Big(\chi_{Z_{1(5s)}}N(1-cd)^2N^*\chi_{Z_{1(5s)}}\Big)&\\
-&\text{Tr}_{Z_{1(5s)}}\Big(\chi_{Z_{1(5s)}}(1-ab)^2\chi_{Z_{1(5s)}}\Big)
-\text{Tr}_{Z_{1(5s)}}\Big(\chi_{Z_{1(5s)}}N(1-dc)^2N^*\chi_{Z_{1(5s)}}\Big)
&\\
=&\text{Tr}_{Z_{1(5s)}}\Big(\chi_{Z_{1(5s)}}(1-ba)^2\chi_{Z_{1(5s)}}\Big)
+\text{Tr}_{Z_{2(5s)}}\Big(\chi_{Z_{2(5s)}}(1-cd)^2\chi_{Z_{2(5s)}}\Big)&\\
-&\text{Tr}_{Z_{1(5s)}}\Big(\chi_{Z_{1(5s)}}(1-ab)^2\chi_{Z_{1(5s)}}\Big)
-\text{Tr}_{Z_{2(5s)}}\Big(\chi_{Z_{2(5s)}}(1-dc)^2\chi_{Z_{2(5s)}}\Big).
&
\end{align*}

\section{Relative $L^2$-Index Theorem}\label{The Relative L2 Index Theorem}
	In the section, we prove the relative $L^{2}$-index theorem, which can be viewed as a relative version of Atiyah's $L^{2}$-index theorem in \cite{Atiyah1992}. It is stated as follows.

\begin{thm}Let $(M_i,S_i,D_i,Y_i,h,q)$ be a set of relative coarse index data with even dimensions where $Y_i$ are compact subsets and $(M_i^\#,S_i^\#,D_i^\#,Y_i^\#,h^\#,q^\#)$ be a set of relative equivariant coarse index data over $G$ acting on $M_i^\#$ freely and on $Y_i^\#$ cocompactly. If the maps $\pi_i:M_i^\#\longrightarrow M_i$ are regular $G$-covers and $M_i^\#$ are equipped with the pullback Riemannian metric, Dirac bundles $S^\#$ and Dirac operators $D_i^\#$ where the subsets $Y_i^\#$ are the preimage of $Y_i$,
 then
\[%
L^2\text{-Index}^G_r\left(D_1^\#,D_2^\#\right)=L^2\text{-Index}_r(D_1,D_2)\in \mathbb{Z}.
\]
\end{thm}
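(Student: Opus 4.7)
The plan is to apply the explicit trace formula for the relative $L^2$-index derived at the end of Section \ref{Relative L2 Index} to both sides of the identity and then to show that each of the four trace terms matches between the cover and the base. Concretely, I would first choose a single normalizing function $\chi$ whose distributional Fourier transform is supported in $(-s,s)$ and such that $\chi(D_i)^2-1$ is locally trace-class (possible by the theory recalled in the paper, e.g.\ Corollary 9.6.13 of \cite{Rufus}). Since $\pi_i: M_i^\# \to M_i$ is a regular $G$-cover with pullback Riemannian metric, Dirac bundle and Dirac operator, finite propagation speed for the wave equation combined with the Fourier inversion formula shows that $\chi(D_i^\#)$ has propagation less than $s$ and its Schwartz kernel $k_i^\#$ satisfies $k_i^\#(x^\#,y^\#)=k_i(\pi_i(x^\#),\pi_i(y^\#))$ whenever $d(x^\#,y^\#)<s$, and vanishes otherwise. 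The same then applies to any polynomial in $\chi(D_i)_\pm$, and in particular to $(1-ba)^2$, $(1-ab)^2$, $(1-cd)^2$, and $(1-dc)^2$ (the building blocks of the trace formula), together with their $\#$-counterparts.

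The key technical ingredient is a trace comparison lemma: for a $G$-invariant operator $T^\#$ on $L^2(M_i^\#,S_i^\#)$ that is the lift of a finite-propagation, locally trace-class operator $T$ on $L^2(M_i,S_i)$, and for a compact subset $K\subseteq M_i$, one has
\[
\mathrm{Tr}_{K^\#}^{M_i^\#}\bigl(\chi_{K^\#} T^\# \chi_{K^\#}\bigr) = \mathrm{Tr}\bigl(\chi_K T \chi_K\bigr).
\]
To prove this, I would pick a Borel fundamental domain $E^\#$ for the $G$-action on $K_{(t)}^\#$ for $t$ exceeding the propagation of $T$, chosen so that $\pi_i|_{E^\#}$ is a Borel bijection onto $K_{(t)}$; this exists because $\pi_i$ is a regular $G$-cover and $G$ acts freely. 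Pullback by $\pi_i$ then gives a unitary $\pi_i^* : L^2(K_{(t)},S_i) \to \chi_{E^\#}L^2(M_i^\#,S_i^\#)$ which, thanks to the Schwartz kernel identification above, intertwines $\chi_{K_{(t)}}T\chi_{K_{(t)}}$ with $\chi_{E^\#}T^\#\chi_{E^\#}$, so the two trace-class operators have equal traces; the result follows from the definition of $\mathrm{Tr}_{K^\#}^{M_i^\#}$ as a stabilising limit over $t$.

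Applying this lemma term-by-term to the formula produced at the end of Section \ref{Relative L2 Index} yields $L^2\text{-Index}_r^G(D_1^\#,D_2^\#)=L^2\text{-Index}_r(D_1,D_2)$; integrality then follows because $Y_1$ is compact, $C^*(Y_1)$ is naturally isomorphic to the compact operators on $\chi_{Y_1} H_1$, the trace $\mathrm{Tr}_{Y_1}$ corresponds to the canonical trace on compact operators, and its induced map on $K_0$ is $\mathbb{Z}$-valued. I expect the main obstacle to lie in the kernel identification and the associated measurable fundamental-domain bookkeeping: one must check carefully that off-diagonal contributions to $k_i^\#$ across distinct $G$-translates vanish by finite propagation speed, that $\pi_i^*$ as described is genuinely unitary and genuinely intertwines the two compressions, and that the $N$ and $N^\#$ operators cancel identically on both sides via the same manipulation as in the derivation of the formula on $\mathrm{Tr}_Z^X$. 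Once these verifications are in place, the remainder of the argument reduces to a direct computation that equates corresponding integrals over compact domains.
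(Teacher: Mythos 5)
Your route is essentially the paper's: apply the explicit trace formula from Section~\ref{Relative L2 Index} to both sides, use finite propagation speed plus the local-isometry property of the cover to identify Schwartz kernels near the diagonal, and compare traces term-by-term over fundamental domains. The integrality argument (compactness of $Y_1$ forces $C^*(Y_1)\cong K(\chi_{Y_1}H_1)$, so $(\mathrm{Tr}_{Y_1})_*$ is $\mathbb{Z}$-valued) is also what the paper does. Two points deserve correction.

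\textbf{The intertwining claim is false as stated.} You assert that pullback along $\pi_i$ gives a unitary $\pi_i^*:L^2(K_{(t)},S_i)\to\chi_{E^\#}L^2(M_i^\#,S_i^\#)$ which intertwines $\chi_{K_{(t)}}T\chi_{K_{(t)}}$ with $\chi_{E^\#}T^\#\chi_{E^\#}$. This is not true for a generic Borel fundamental domain $E^\#$. The kernel identification $k^\#(x^\#,y^\#)=k(\pi_i(x^\#),\pi_i(y^\#))$ holds only when $d(x^\#,y^\#)<s$; if $x^\#,y^\#\in E^\#$ lie in different "sheets'' so that $d(x^\#,y^\#)\geq s$ while $d(\pi_i(x^\#),\pi_i(y^\#))<s$, then $k^\#(x^\#,y^\#)=0$ (by propagation) but $k(\pi_i(x^\#),\pi_i(y^\#))$ need not vanish, so the conjugated operator does not equal $\chi_{K_{(t)}}T\chi_{K_{(t)}}$. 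The paper avoids this by decomposing the fundamental domain as $E^i=\coprod_j E^i_j$ with each piece of diameter $<s$, so that $\pi_i:E^i_j\to F^i_j$ \emph{is} a homeomorphism and the compression $\chi_{E^i_j}T^\#\chi_{E^i_j}$ is genuinely unitarily equivalent to $\chi_{F^i_j}T\chi_{F^i_j}$; one then sums the block-diagonal traces. Alternatively you could replace the intertwining claim by the identity $\mathrm{Tr}(S)=\int\mathrm{tr}\,k_S(x,x)\,dx$ and match diagonals only, but then you must justify that the operators involved (products and squares of $\chi(D_i)_\pm$) have continuous kernels for which this diagonal-integral formula is valid; the block decomposition is the cleaner fix.

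\textbf{The choice of $s$ needs an additional constraint.} You need not only $\mathrm{prop}\,\chi(D_i)<s$, but also that $s$ is small enough so that $\pi_i$ restricts to an isometry on $s$-balls around points of the relevant compact set --- otherwise the kernel identification for $d(x^\#,y^\#)<s$ can fail. Since the $Z_{i(5)}$ are compact, there is such an $\varepsilon>0$, and the paper takes $s<\min(1,\varepsilon/10)$. Include this in your setup before invoking the kernel identity.

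With those two repairs the argument matches the paper's proof.
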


\begin{proof}
Since $Y_1$ is compact, there is
\[
\text{Tr}_*=(\text{Tr}_{Y_1})_* :K_0(C^*(Y_1))\longrightarrow\mathbb{Z}.
\]
So $L^2\text{-Index}_r(D_1,D_2)\in \mathbb{Z}$. Just need to prove that
\[
L^2\text{-Index}^G_r\left(D_1^\#,D_2^\#\right)=L^2\text{-Index}_r(D_1,D_2).
\]

As $Z_{i(5)}$ are compact subsets of $M_i$, there is a $\varepsilon>0$ such that
$$\pi_i:O(x,\varepsilon)\longrightarrow O(\pi(x),\varepsilon)$$
are homeomorphisms for every $x\in Z^\#_{i(5)}$ and $i=1$, $2$.

Following the steps described in the previous subsection, fix a positive number $0<s<\min (1,\frac{\varepsilon}{ 10})$ to define $Z_i=B(Y_i,s)$ and take a normalizing function $\chi$ such that $\text{supp}(\hat \chi)\subseteq (-s,s)$, $\chi(D_i)^2-1$ are locally trace-class operators on $B(H_i)^G$ and $\chi(D_i^\#)^2-1$ are locally trace-class operators on $B(H_i^\#)^G$ where $H_i=L^2(M_i,S_i)$ and $H_i^\#=L^2(M_i^\#,S_i^\#)$.

Suppose that
\[
\chi(D_1)=\left(\begin{array}{cc}0 & b \\ a & 0\end{array}\right),\ \ \
\chi(D_2)=\left(\begin{array}{cc}0 & d \\ c & 0\end{array}\right),
\]
\[
\chi(D_1^\#)=\left(\begin{array}{cc}0 & \bar b \\ \bar a & 0\end{array}\right),\ \ \
\chi(D_2^\#)=\left(\begin{array}{cc}0 & \bar d \\ \bar c & 0\end{array}\right).
\]
We have the formulas that
\begin{align*}
&L^2\text{-Index}_r(D_1,D_2) &\\
=&\text{Tr}\Big(\chi_{Z_{1(5s)}}(1-ba)^2\chi_{Z_{1(5s)}}\Big)
+\text{Tr}\Big(\chi_{Z_{2(5s)}}(1-cd)^2\chi_{Z_{2(5s)}}\Big)&\\
-&\text{Tr}\Big(\chi_{Z_{1(5s)}}(1-ab)^2\chi_{Z_{1(5s)}}\Big)
-\text{Tr}\Big(\chi_{Z_{2(5s)}}(1-dc)^2\chi_{Z_{2(5s)}}\Big);
&
\end{align*}
and
\begin{align*}
&L^2\text{-Index}_r^G(D_1^\#,D_2^\#) &\\
=&\text{Tr}_{Z_{1(5s)}^\#}\Big(\chi_{Z^\#_{1(5s)}}(1-\bar b \bar a)^2\chi_{Z^\#_{1(5s)}}\Big)
+\text{Tr}_{Z^\#_{2(5s)}}\Big(\chi_{Z^\#_{2(5s)}}(1-\bar c \bar d)^2\chi_{Z^\#_{2(5s)}}\Big)&\\
-&\text{Tr}_{Z^\#_{1(5s)}}\Big(\chi_{Z^\#_{1(5s)}}(1-\bar a\bar b)^2\chi_{Z^\#_{1(5s)}}\Big)
-\text{Tr}_{Z^\#_{2(5s)}}\Big(\chi_{Z^\#_{2(5s)}}(1-\bar d\bar c)^2\chi_{Z^\#_{2(5s)}}\Big).
&
\end{align*}
It is possible to find bounded fundamental domains $E^{i}$ for $Z^\#_{i(5s)}$ such that
\[
E^{i}=\coprod_{j=1}^\infty E^i_j, \ \ Z_{i(5s)}=\coprod_{j=1}^\infty F^i_j,
\]
where $E^i_j$ and $F^i_j$ are Borel subsets with diameters less than $s$ and
\[
\pi_i:E^i_j\longrightarrow F^i_j
\]
are homeomorphisms for every $i$ and $j$. As a result,
\begin{align*}
&L^2\text{-Index}_r^G(D_1^\#,D_2^\#) &\\
=&\text{Tr}\Big(\chi_{E^1}(1-\bar b \bar a)^2\chi_{E^1}\Big)
+\text{Tr}\Big(\chi_{E^2}(1-\bar c \bar d)^2\chi_{E^2}\Big)&\\
-&\text{Tr}\Big(\chi_{E^1}(1-\bar a\bar b)^2\chi_{E^1}\Big)
-\text{Tr}\Big(\chi_{E^2}(1-\bar d\bar c)^2\chi_{E^2}\Big)
&\\
=& \sum_{j=1}^\infty  \text{Tr}\Big(\chi_{E^1_j}(1-\bar b \bar a)^2\chi_{E^1_j}\Big)
+\sum_{j=1}^\infty\text{Tr}\Big(\chi_{E^2_j}(1-\bar c \bar d)^2\chi_{E^2_j}\Big)&\\
-&\sum_{j=1}^\infty\text{Tr}\Big(\chi_{E^1_j}(1-\bar a\bar b)^2\chi_{E^1_j}\Big)
-\sum_{j=1}^\infty\text{Tr}\Big(\chi_{E^2_j}(1-\bar d\bar c)^2\chi_{E^2_j}\Big)
&\\
=& \sum_{j=1}^\infty  \text{Tr}\Big(\chi_{F^1_j}(1- b  a)^2\chi_{F^1_j}\Big)
+\sum_{j=1}^\infty\text{Tr}\Big(\chi_{F^2_j}(1- c  d)^2\chi_{F^2_j}\Big)&\\
-&\sum_{j=1}^\infty\text{Tr}\Big(\chi_{F^1_j}(1- a b)^2\chi_{F^1_j}\Big)
-\sum_{j=1}^\infty\text{Tr}\Big(\chi_{F^2_j}(1- d c)^2\chi_{F^2_j}\Big)
&\\
=&L^2\text{-Index}_r(D_1,D_2) .&
\end{align*}

\end{proof}

\noindent{\bf Acknowledgement}\ \
We wish to thank Yi-Jun Yao and Shengzhi Xu for many stimulating discussions and helpful comments. This work is supported in part by the National Science Foundation of China No. 11771143 \& 11801178.



\begin{thebibliography}{10}

\bibitem{Anghel1990}
N.~Anghel.
\newblock $L^2$-index theorems for perturbed dirac operators.
\newblock {\em Communications in Mathematical Physics}, 128(1):77--97, 1990.

\bibitem{Anghel1993}
N.~Anghel.
\newblock On the index of {C}allias-type operators.
\newblock {\em Geometric and Functional Analysis GAFA}, 3(5):431--438, 1993.

\bibitem{Atiyah1992}
M.~F. Atiyah.
\newblock Elliptic operators, discrete groups and von {N}eumann algebras.
\newblock {\em Ast\'{e}risque}, 32(33):43-72, 1976.

\bibitem{Borisov1988}
N.~V. Borisov, W.~M\"{u}ller, and R.~Schrader.
\newblock Relative index theorems and supersymmetric scattering theory.
\newblock {\em Communications in Mathematical Physics}, 114(3):475--513, 1988.

\bibitem{Bunke1992}
Ulrich Bunke.
\newblock Relative index theory.
\newblock {\em Journal of Functional Analysis}, 105(1), 63-76, 1992.

\bibitem{Bunke1995}
Ulrich Bunke.
\newblock A $K$-theoretic relative index theorem and {C}allias-type {D}irac
  operators.
\newblock {\em Mathematische Annalen}, 303(1):241--279, 1995.

\bibitem{Donnelly1987}
Harold Donnelly.
\newblock Essential spectrum and heat kernel.
\newblock {\em Journal of Functional Analysis}, 75(2):362--381, 1987.

\bibitem{Gromov}
Mikhael Gromov and H.~Blaine Lawson, Jr.
\newblock Positive scalar curvature and the {D}irac operator on complete
  {R}iemannian manifolds.
\newblock {\em Inst. Hautes \'{E}tudes Sci. Publ. Math.}, (58):83--196 (1984),
  1983.

\bibitem{Guo2019}
Hao Guo, Peter Hochs, and Varghese Mathai.
\newblock Equivariant {C}allias index theory via coarse geometry.
\newblock 	arXiv:1902.07391 (2019).

\bibitem{Higson2000}
Nigel Higson and John Roe.
\newblock {\em Analytic K-homology}.
\newblock Oxford University Press, 2000.

\bibitem{Hochs2019}
Peter Hochs, Bai~Ling Wang, and Hang Wang.
\newblock An equivariant {A}tiyah-{P}atodi-{S}inger index theorem for proper actions.
\newblock arXiv:1904.11146 (2019).

\bibitem{Julg1988}
Pierre Julg.
\newblock Indice relatif et $K$-th\'{e}orie bivariante de {K}asparov.
\newblock {\em CR Acad. Paris Sci. Math}, (6):243--248, 1988.

\bibitem{Nazaikinskii2015On}
V.~E. Nazaikinskii.
\newblock On a $KK$-theoretic counterpart of relative index theorems.
\newblock {\em Russian Journal of Mathematical Physics}, 22(3):374--378, 2015.

\bibitem{Nazaikinskii2015Relative}
V.~E. Nazaikinskii.
\newblock Relative index theorem in $K$-homology.
\newblock {\em Functional Analysis and Its Applications}, 2015.

\bibitem{Roe1990}
John Roe.
\newblock Exotic cohomology and index theory.
\newblock {\em Bulletin of the American Mathematical Society},
  23(1990):447--453, 1990.

\bibitem{Roe1991}
John Roe.
\newblock A note on the relative index theorem (eng).
\newblock {\em Quarterly Journal of Mathematics}, (1):365--373, 1991.

\bibitem{Roe2016}
John Roe.
\newblock Positive curvature, partial vanishing theorems and coarse indices.
\newblock {\em Proc. Edinb. Math. Soc. (2)}, 59(1):223--233, 2016.

\bibitem{Rufus}
Rufus Willett and Guoliang Yu.
\newblock {\em Higher index theory}.
\newblock Cambridge studies in advanced mathematics 189, Cambridge University Press, 2020.

\bibitem{XieandYu14positivescalarcurvature}
Zhizhang Xie and Guoliang Yu.
\newblock Positive scalar curvature, higher rho invariants and localization
  algebras.
\newblock {\em Adv. Math.}, 262:823--866, 2014.

\bibitem{XieYu2014}
Zhizhang Xie and Guoliang Yu.
\newblock A relative higher index theorem, diffeomorphisms and positive scalar
  curvature.
\newblock {\em Adv. Math.}, 250:35--73, 2014.

\bibitem{Yu1997}
Guoliang Yu.
\newblock Localization algebras and the coarse {B}aum-{C}onnes conjecture.
\newblock {\em $K$-Theory}, 11(4):307--318, 1997.

\end{thebibliography}


\end{document}